\pdfoutput=1
\documentclass[11pt,reqno]{amsart}
\usepackage[utf8]{inputenc}
\usepackage{amsmath,amssymb,amsthm,amsfonts}
\usepackage{mathtools}
\usepackage{bm}

\usepackage[margin=2.5cm]{geometry}

\DeclareMathOperator{\supp}{supp}
\DeclareMathOperator{\scl}{socle}

\DeclareMathOperator{\diam}{diam}
\DeclareMathOperator{\spn}{span}
\DeclarePairedDelimiter{\abs}{\lvert}{\rvert}
\DeclarePairedDelimiter{\norm}{\lVert}{\rVert}
\DeclarePairedDelimiterX{\pair}[2]{\langle}{\rangle}{#1, #2}

\theoremstyle{plain}
\newtheorem{theorem}{Theorem}[section]
\newtheorem{lemma}[theorem]{Lemma}
\newtheorem{proposition}[theorem]{Proposition}
\newtheorem{corollary}[theorem]{Corollary}

\theoremstyle{definition}

\theoremstyle{remark}
\newtheorem{remark}[theorem]{Remark}

\usepackage[shortlabels]{enumitem}
\usepackage{hyperref}

\title[Exact \texorpdfstring{$L^2$}{L2} convergence through wavelets and tent spaces]%
{Exact $L^2$ convergence through the wavelet and tent space characterizations of $h^p(\mathbb{R}^n)$, $0 < p \le 1$}

\author{M. El Aoumari}
\address{Department of Mathematics and Statistics, Concordia University,
Montréal, Québec, Canada}
\email{mohamed.elaoumari@mail.concordia.ca}
\urladdr{https://orcid.org/0009-0003-9984-312X}

\date{\today}

\subjclass[2020]{42B30, 42B35, 42C40, 46E30}
\keywords{Local Hardy space, wavelets, tent space, atomic decomposition, Goldberg's space}

\makeatletter
\g@addto@macro\normalsize{%
  \setlength{\abovedisplayskip}{1.0pt plus 4.55pt}%
  \setlength{\belowdisplayskip}{1.0pt plus 4.55pt}%
}
\makeatother
\begin{document}

\begin{abstract}
We extend the wavelet and tent space characterization of $h^1(\mathbb{R}^n)$ to $0 < p \le 1$ via a local dyadic tent space $t_p$, and derive the unconditional $L^2$-convergence of the resulting atomic decomposition, with an exact error identity refining Ding, Jiang \& Zhu (2019).
\end{abstract}
\maketitle

\section{Introduction}
\subsection{Background and goal}
The local Hardy space $h^p(\mathbb{R}^n)$~\cite{goldberg1979} is the non-homogeneous variant of $H^p(\mathbb{R}^n)$~\cite{feffermanstein1972}, its atoms carrying moment conditions only below unit scale. Meyer's wavelet characterization of $H^1(\mathbb{R}^n)$~\cite[Chapter~5]{meyer1992} proceeds through the dyadic tent space $T_1$~\cite{coifman1985}; the author's thesis~\cite{elaoumari2025} carries this to $h^1(\mathbb{R}^n)$ via a local tent space $t_1$. The present paper extends this to the full range $0 < p \le 1$: we introduce a local dyadic tent space $t_{p}$, related to $T_{p}$ by an exact isometric compression, prove it is a quasi-Banach space with an atomic decomposition (Theorem~\ref{thm:tp-atomic-restate}), and derive a complete wavelet characterization of $h^p(\mathbb{R}^n)$ (Theorem~\ref{thm:hp-wavelet}). The quasi-Banach range forces genuine changes throughout: $\ell^p$-summed coefficients, atomic series that need not converge in $L^1_{\mathrm{loc}}$, pairing conventions for non-Schwartz test objects (Subsection~\ref{ssec:wavelet}), and a Daubechies basis of regularity $r > n_p := n(1/p-1)$ with $N \ge N_p + 1 := \lfloor n_p \rfloor + 1$ vanishing moments, both margins exact.

Among the seven equivalent conditions is a Goldberg-type splitting $f - \varphi * f \in H^{p}$, $\varphi * f \in L^{p}$, running here over admissible Schwartz functions ($\varphi \in \mathcal S(\mathbb{R}^n)$, $\int \varphi = 1$, $\int x^\nu \varphi = 0$ for $0 < \abs{\nu} \le N_p$), via a direct correction lemma (Lemma~\ref{lem:schwartz-correction}) on the classical molecular theory of Taibleson \& Weiss~\cite{taibleson1980}: a single mechanism, sufficient for the full range $0 < p \le 1$ and for the exact $L^2$ identity below, and the reason we fix it here rather than the larger admissible class the splitting also tolerates. At $p = 1$ the thesis~\cite{elaoumari2025} extends this admissible class further, to bounded compactly supported functions, producing atoms rather than molecules; identifying the largest such class, for every $p$, is a separate question from the wavelet characterization pursued here.

A further contribution, and the paper's namesake, is the unconditional $L^2$ convergence of the wavelet-built atomic decomposition of $f \in h^p(\mathbb{R}^n) \cap L^2(\mathbb{R}^n)$ (Proposition~\ref{prop:L2-convergence-E}), sharpening the discrete $L^2$ decomposition of Ding, Jiang \& Zhu~\cite{dingjiangzhu2019}: the error of every finite section is an exact Parseval-tail identity, the atomic partial sums coinciding with orthogonal projections onto disjoint coefficient blocks of an orthonormal wavelet basis.

\subsection{Relation to existing literature}
Wavelet characterizations of the global spaces $H^{p}$, $0 < p \le 1$, are classical: Wu~\cite{wu1992} and García-Cuerva \& Martell~\cite{garciacuerva-martell2001} treat the weighted spaces on $\mathbb{R}$, and on $\mathbb{R}^n$ the characterization follows from $H^p(\mathbb{R}^n) = \dot F^0_{p,2}(\mathbb{R}^n)$ and the $\varphi$-transform of Frazier \& Jawerth~\cite{frazier-jawerth1990}. On local Hardy spaces, wavelets have served chiefly for bilinear decompositions, by Cao, Ky \& Yang~\cite{caoKyYang2018} and Yang, Yuan \& Zhang~\cite{yangYuanZhang2021}, and on spaces of homogeneous type for the characterizations of Fu \& Yang~\cite{fuYang2018} and He, Yang \& Yuan~\cite{heYangYuan2021}. The identification $h^p(\mathbb{R}^n) = F^0_{p,2}(\mathbb{R}^n)$ of Goldberg and Triebel~\cite{goldberg1979, triebel1983}, combined with~\cite{frazier-jawerth1990}, yields in principle a wavelet description of $h^{p}$ through the sequence spaces $f^0_{p,2}$. Our aim is of a different nature: a direct, self-contained, Meyer-style characterization through a local tent space, with explicit atoms, unconditionality, and the Goldberg-type splitting as equivalent conditions, from which the exact $L^2$ identity emerges as a direct consequence, identifying the specific atomic grouping with disjoint coefficient blocks of an orthonormal basis, a structural fact invisible to the abstract sequence space formalism.

\subsection{Conventions}
Throughout, $p \in (0, 1]$ is fixed, and the associated exponents are:

\begin{equation}
\label{eq:exponents}
n_p := n\Bigl( \frac{1}{p} - 1 \Bigr), \quad
  N_p := \lfloor n_p \rfloor, \quad
  (N_p', \delta_p) := \bigl( \lceil n_p \rceil - 1,\;
     n_p - N_p' \bigr)
  =
  \begin{cases}
    \bigl( N_p,\; \{n_p\} \bigr), & n_p \notin \mathbb{Z}, \\[2pt]
    \bigl( N_p - 1,\; 1 \bigr), & n_p \in \mathbb{Z},
  \end{cases}
\end{equation}
the last pair for $p < 1$ only, $\{\cdot\}$ the fractional part, so $\delta_p \in (0, 1]$ and $N_p' + \delta_p = n_p$; $n_p = N_p = 0$ at $p = 1$, $N_p' \le N_p$ with equality unless $n_p \in \mathbb{Z}^+$, and $N_p + 1 > n_p$. We also use, without further comment, $(s+t)^p \le s^p + t^p$, the resulting $p$-triangle inequality, and $\ell^p \hookrightarrow \ell^1$.

A second exponent $q \in (1, \infty]$ indexes atom integrability, with conjugate $q' = q/(q-1)$ ($q' = 1$, $1/q = p/q = 0$ at $q = \infty$); all $L^q, \ell^q$ spaces are over $\mathbb{R}^n$, complex-valued. For $p < 1$, $\norm{\cdot}_p$ is a quasi-norm and $L^p(\mathbb{R}^n) \not\subseteq L^1_{\mathrm{loc}}(\mathbb{R}^n)$.

The wavelet system (Subsection~\ref{ssec:wavelet}) is indexed by $\lambda = k/2^j + e/2^{j+1}$, scale $j \in \mathbb{Z}$, translation $k \in \mathbb{Z}^n$, direction $e \in E$, $k$ also indexing the father translates $\phi_k$ and $e_k$ the single-site father atoms of Section~\ref{sec:tp}; $r$, $N$, $\psi$ denote the basis regularity, vanishing moments, and generating wavelets.

Multi-indices $\nu = (\nu_1, \dots, \nu_n) \in (\mathbb{N} \cup \{0\})^n$ carry the standard notation $\abs{\nu} = \sum \nu_i$, $\nu! = \prod \nu_i!$, $x^\nu = \prod x_i^{\nu_i}$, $\partial^\nu = \prod \partial_{x_i}^{\nu_i}$; secondary multi-indices in binomial expansions are $\beta \le \nu$ componentwise, distinct from the father coefficients $\beta_k$. Hölder exponents are $\delta, \theta \in (0,1]$, moduli $\omega$, $\gamma \in (0,1]$ the measure ratio of $R(\lambda), R_k$, $m$ the window dilation (Subsection~\ref{ssec:wavelet}), $\vartheta \in (0,1)$ the local magnification threshold (Section~\ref{sec:tp}), $\omega_n = \abs{B(0,1)}$, and $\norm{\cdot}_p := \norm{\cdot}_{L^p(\mathbb{R}^n)}$.

Balls are $B = B(x_B, r_B)$, $cB = B(x_B, c r_B)$; for a cube $R$, $cR$ is the concentric dilate of side $c\,\ell(R)$. Dyadic cubes $Q \in \mathcal{D}$ have $\abs{Q} = \ell(Q)^n$. The small/large dichotomy is $r_B \lessgtr 1$ for balls and $\ell(Q) \le 1$ (family $\mathcal{D}^{0}$) for cubes, comparison constants absorbed into $\lesssim$.

The Fourier transform is normalized $\hat f(\xi) = \int f(x)\, e^{-i x \cdot \xi}\, dx$, so $\int f = \hat f(0)$ and $\int x^\nu f = i^{\abs\nu} \partial^\nu \hat f(0)$. Pairings $\pair{f}{g}$ are anti-linear in $g$, equal to $\int f \bar g$ on functions; the duality $\pair{\cdot}{\cdot}_*$ is with respect to $(H^{p})^* \cong \dot\Lambda_{n_p}$ ($\mathrm{H}^1$-$\mathrm{BMO}$ at $p=1$), and the extended pairing conventions \textup{(P1)}-\textup{(P3)} are those of Subsection~\ref{ssec:wavelet}.

$X \lesssim Y$ means $X \le CY$ for a constant depending only on $(n, p, q, m, \text{wavelet basis})$, hence on $(r, N, \gamma, c)$; $X \simeq Y$ means $X \lesssim Y \lesssim X$. Constants may change from line to line.

\section{Preliminaries}
\label{sec:prelim}

\subsection{Dyadic cubes, wavelet basis, pairings, and exhaustions}
\label{ssec:wavelet} Let $\mathcal{D}$ denote the set of all (half-open) dyadic cubes in $\mathbb{R}^n$: $Q \in \mathcal{D}$ if $Q = \prod_{i=1}^n [2^{-j} k_i, 2^{-j}(k_i+1))$ for some $j \in \mathbb{Z}$ and $k = (k_1, \dots, k_n) \in \mathbb{Z}^n$. The side length of $Q$ is $\ell(Q) = 2^{-j}$ and $\abs{Q} = 2^{-jn}$, and we write $\mathcal{D}_{j} := \{Q \in \mathcal{D} : \ell(Q) = 2^{-j}\}$ for the cubes of \emph{generation} $j$, so that $\mathcal{D} = \bigcup_{j \in \mathbb{Z}} \mathcal{D}_{j}$, the union disjoint. We write
\[
  \mathcal{D}^{0} := \{Q \in \mathcal{D} : \ell(Q) \le 1\} = \bigcup_{j \ge 0} \mathcal{D}_{j}
\]
for the family of \emph{small} dyadic cubes. For a (not necessarily dyadic) cube $R$ we write $\mathcal{T}(R) = \{Q \in \mathcal{D} : Q \subseteq R\}$ for the \emph{tent} over $R$ and $\mathcal{T}_{0}(R) = \{Q \in \mathcal{D}^{0} : Q \subseteq R\}$ for its local version.

Given a sequence of scalars $\alpha = \{\alpha(Q)\}_{Q \in \mathcal{D}'}$ indexed by some $\mathcal{D}' \subseteq \mathcal{D}$, we set
\[
  \supp(\alpha) = \{Q \in \mathcal{D}' : \alpha(Q) \ne 0\}, \quad
  \scl(\alpha) = \bigcup_{Q \in \supp(\alpha)} Q
\]
for its \emph{support} and its \emph{socle}, following the terminology of~\cite[Chapter~5, p.~144]{meyer1992} and of~\cite{elaoumari2025}.

We work throughout with a fixed orthonormal wavelet basis of $L^2(\mathbb{R}^n)$ of \emph{tensor-product} compactly supported Daubechies type (see~\cite[Sections~2.A and~4.C]{daubechies1988}). Concretely, let $\psi \in C^r(\mathbb{R})$ be a one-dimensional Daubechies wavelet of order $N$, with compactly supported scaling function $\phi \in C^r(\mathbb{R})$, where $r > n_p$ and $N \ge N_p + 1$. Setting $E = \{0,1\}^n \setminus \{0_{\mathbb{R}^n}\}$, define for each $e \in E$ the tensor-product mother wavelets $\psi^e(x_1, \dots, x_n) = \prod_{i=1}^n \chi^{e_i}(x_i)$, where $\chi^0 = \phi$ and $\chi^1 = \psi$. Set further $\phi(x_1,\dots,x_n) = \prod_{i=1}^n \chi^0(x_i)$, the same letter serving for the one-dimensional factor and its tensor power.

The full wavelet system is then:
\[
  \psi_\lambda(x) = 2^{nj/2}\, \psi^e(2^j x - k), \qquad
  \lambda = \frac{k}{2^j} + \frac{e}{2^{j+1}}, \quad
  k \in \mathbb{Z}^n, j \in \mathbb{Z}, e \in E,
\]
together with the integer translates of the father wavelet $\phi_k(x) = \phi(x - k)$, $k \in \mathbb{Z}^n$.

The index set is:
\[
  \Lambda = \biggl\{ \lambda = \frac{k}{2^j} + \frac{e}{2^{j+1}} \ :\
   k \in \mathbb{Z}^n, j \in \mathbb{Z}, e \in E \biggr\},
   \qquad
  \Lambda^0 = \{\lambda \in \Lambda : j \ge 0\}.
\]
We write $Q(\lambda) \in \mathcal{D}$ for the dyadic cube $\{x \in \mathbb{R}^n : 2^j x - k \in [0,1)^n\}$ ($\lambda \in \Lambda^0$ thus corresponds to $Q(\lambda) \in \mathcal{D}^{0}$). We choose, following~\cite[Chapter~5, p.~142]{meyer1992} as in~\cite[Section~4.2]{elaoumari2025}, by translating the basic wavelets if necessary, a sub-cube $A^e \subseteq [0,1)^n$ with $\abs{A^e} \ge \gamma > 0$ on which $\abs{\psi^e} \ge c > 0$, and likewise a sub-cube $A^0 \subseteq [0,1)^n$ with $\abs{A^0} \ge \gamma$ on which $\abs{\phi} \ge c$, the scaling function being continuous with $\int \phi = 1$. Setting $R(\lambda) = \{x \in \mathbb{R}^n : 2^j x - k \in A^e\}$ and $R_k := k + A^0$, we have $\abs{R(\lambda)} \ge \gamma \abs{Q(\lambda)}$, $\abs{R_k} \ge \gamma$, $\abs{\phi_k} \ge c\, \mathbf{1}_{R_k}$, and:
\begin{equation}
\label{eq:lb-psi}
  \abs{\psi_\lambda(x)} \ge c\, \abs{Q(\lambda)}^{-1/2}\,
  \mathbf{1}_{R(\lambda)}(x), \qquad \lambda \in \Lambda.
\end{equation}

There is a uniform $m \ge 1$ (depending only on the choice of basis) such that:
\[
  \supp \psi_\lambda \subseteq m\,Q(\lambda), \qquad
  \supp \phi_k \subseteq m\,Q_k,
\]
where $Q_k = k + [0,1)^n$, $k \in \mathbb{Z}^n$.

The Daubechies wavelets have, by construction, vanishing moments up to order $N$ (see~\cite[Section~4.C]{daubechies1988}):
\begin{equation}
\label{eq:moments}
  \int_{\mathbb{R}^n} x^\nu\, \psi_\lambda(x)\, dx = 0, \qquad
  \abs\nu \le N, \quad \lambda \in \Lambda.
\end{equation}

The constants implicit in $\lesssim$ may depend on the fixed basis, in particular on $(r, N)$.

Since $\psi_\lambda, \phi_k \in C^r_c(\mathbb{R}^n) \setminus \mathcal S(\mathbb{R}^n)$, the coefficients $\alpha(\lambda) = \pair{f}{\psi_\lambda}$, $\beta_k = \pair{f}{\phi_k}$ need three conventions \textup{(P1)}-\textup{(P3)} for a general $f \in \mathcal{S}'(\mathbb{R}^n)$, agreeing on their common domains (Remark~\ref{rem:pairing-compat}).
\begin{enumerate}[label=\textup{(P\arabic*)}, wide, leftmargin=0pt]
\item
\label{pair:general} Every $f \in \mathcal{S}'(\mathbb{R}^n)$ satisfies, for some $N_0, K \ge 0$, $C > 0$ (\cite[Section~2.3]{Grafakos2014}):
\begin{equation}
\label{eq:order}
\abs{\pair{f}{\zeta}} \;\le\; C \sum_{\abs{\beta} \le N_0} \sup_{x \in \mathbb{R}^n} (1 + \abs{x})^{K}\, \abs{\partial^\beta \zeta(x)}, \qquad \zeta \in \mathcal S(\mathbb{R}^n),
\end{equation}
and has \emph{order less than $r$} if this holds with $N_0 < r$, so that $\psi_\lambda, \phi_k \in C^{N_0}_c(\mathbb{R}^n)$. For such $f$ and $\eta \in C^{N_0}_c(\mathbb{R}^n)$, with $\theta_\varepsilon$ any standard mollifier, set:
\begin{equation}
\label{eq:extended-pairing}
\pair{f}{\eta} \;:=\; \lim_{\varepsilon \to 0}\, \pair{f}{\eta * \theta_\varepsilon},
\end{equation}
well defined and mollifier-independent since $\partial^\beta(\eta * \theta_\varepsilon) \to \partial^\beta \eta$ uniformly on a fixed compact set, and~\eqref{eq:order} applies to differences; it agrees with $\int f\bar\eta$ for $f \in L^1_{\mathrm{loc}}$, and, by the same estimate applied uniformly in $M$:
\begin{equation}
\label{eq:P1-continuity}
  f_M \to f \text{ in } \mathcal{S}'(\mathbb{R}^n) \text{ with~\eqref{eq:order} uniform in } M
  \;\implies\; \pair{f_M}{\eta} \to \pair{f}{\eta}, \quad \eta \in C^{N_0}_c(\mathbb{R}^n).
\end{equation}
Under \textup{(P1)}: $\alpha(\lambda) := \pair{f}{\psi_\lambda}$, $\beta_k :=\pair{f}{\phi_k}$, via~\eqref{eq:extended-pairing}.

\item
\label{pair:atomic} If $f = \sum_j \mu_j a_j$ in $\mathcal{S}'(\mathbb{R}^n)$, with $L^q$ $h^{p}$-atoms $a_j$ and $\sum_j \abs{\mu_j}^p < \infty$, set:
\begin{equation}
\label{eq:atomic-pairing}
\alpha(\lambda) := \sum_j \mu_j \pair{a_j}{\psi_\lambda}, \qquad \beta_k := \sum_j \mu_j \pair{a_j}{\phi_k},
\end{equation}
absolutely convergent, term-wise and in $j$: $\sup_j\abs{\pair{a_j}{\eta}} \le C_\eta$ by Hölder's Inequality for large atoms and by the Taylor/moment estimate~\eqref{eq:taylor-holder} for small ones, against $\sum_j\abs{\mu_j} < \infty$. Independent of the decomposition (Remark~\ref{rem:pairing-compat}).

\item
\label{pair:conditionG} If $g := f - \varphi_0 * f \in H^p(\mathbb{R}^n)$ and $u := \varphi_0 * f \in L^p(\mathbb{R}^n)$, $\varphi_0$ the fixed function of~\eqref{eq:admiss-p-s}, set:
\begin{equation}
\label{eq:G-pairing}
\alpha(\lambda) := \pair{g}{\psi_\lambda}_{*} + \int_{\mathbb{R}^n} u\, \overline{\psi_\lambda}\, dx, \qquad \beta_k := \pair{g}{\phi_k}_{*} + \int_{\mathbb{R}^n} u\, \overline{\phi_k}\, dx,
\end{equation}
$\pair{\cdot}{\cdot}_{*}$ the $H^{p}$-$\dot\Lambda_{n_p}$ duality ($\mathrm H^1$-$\mathrm{BMO}$ at $p=1$), applicable since $\psi_\lambda, \phi_k \in C^{N_p,\delta}_c \hookrightarrow \dot\Lambda_{n_p}$, the integrals absolute since $u$ is continuous. Independent of the admissible Schwartz function used (Remark~\ref{rem:pairing-compat}).
\end{enumerate}

\begin{remark}\label{rem:pairing-compat}
The conventions agree throughout. \textup{(P1)}$\cap$\textup{(P2)}: apply~\eqref{eq:P1-continuity} to the atomic partial sums. \textup{(P2)}$\cap$\textup{(P3)}: on continuous $h \in H^{p}$ the duality pairing $\pair{h}{\eta}_*$ coincides with $\int h\bar\eta$, since band-limited mollifications $\psi_t * h \to h$ in $H^{p}$ and locally uniformly, by the Nikol'skij Inequality~\cite[Section~1.3.2]{triebel1983}, giving both decomposition-independence in \textup{(P2)} and $\varphi$-independence in \textup{(P3)}, the latter via $h := \varphi_1 * f - \varphi_2 * f \in H^{p} \cap C^\infty$.
\end{remark}

An important result is the inhomogeneous wavelet decomposition (see~\cite[Chapter~3]{meyer1992}, in particular pages 109 and 110): for every $f \in \mathcal{S}'(\mathbb{R}^n)$ of order less than $r$ (so that the coefficients below are defined, cf.\ convention \textup{(P1)}), we have:
\begin{equation}
\label{eq:inh-wavelet-decomp}
f = \sum_{k \in \mathbb{Z}^{n}} \pair{f}{\phi_k}\, \phi_{k} + \sum_{\lambda \in \Lambda^0} \pair{f}{\psi_\lambda}\, \psi_{\lambda} = \sum_{k \in \mathbb{Z}^{n}} \beta_k\, \phi_{k} + \sum_{\lambda \in \Lambda^0} \alpha(\lambda)\, \psi_{\lambda},
\end{equation}
with convergence in $\mathcal{S}'(\mathbb{R}^n)$, summably in the sense recalled below, the series of pairings against each $\zeta \in \mathcal S$ converging absolutely.

Following~\cite[Chapter~4]{elaoumari2025}, an \emph{exhaustion} of a countable index set $I$ is an increasing sequence $\{F_m\}_{m \in \mathbb{N}}$ of finite subsets with union $I$. The index sets of this paper, $\mathcal{D}^{0}$, $\mathbb{Z}^n$ and $\Lambda^0$, are countable, and any natural choice of exhaustion will do.

The sharpest frame for the convergence statements of this paper is the net of finite subsets of $I$, directed by inclusion: a family $(x_i)_{i \in I}$ in a quasi-normed space $X$ is \emph{summable} to $S$ when the net of partial sums $\bigl( \sum_{i \in F} x_i \bigr)_F$ converges to $S$. Every atomic and wavelet series below converges in this mode, and the following lemma records its equivalence with the exhaustion formulations and the criterion through which we verify it.

\begin{lemma}
\label{lem:net-summability} Let $X$ be a complete quasi-normed space whose quasi-norm is $p$-sub-additive for some $p \in (0, 1]$, let $I$ be a countable index set, and let $(x_i)_{i \in I} \subseteq X$. The following are equivalent:
\begin{enumerate}[(i)]
\item the family is summable: the net of finite partial sums converges in $X$,
\item for every $\varepsilon > 0$ there is a finite $F_0 \subseteq I$ with $\norm[\big]{\sum_{i \in G} x_i}_X < \varepsilon$ for every finite $G \subseteq I$ disjoint from $F_0$,
\item for every exhaustion $\{F_m\}$ of $I$, the partial sums $\sum_{i \in F_m} x_i$ converge in $X$, with a limit independent of the exhaustion.
\end{enumerate}
In this case the three limits coincide. In particular, if $\displaystyle \norm[\big]{\sum_{i \in G} x_i}_X^p \le C \sum_{i \in G} c_i$ for every finite $G \subseteq I$, with $c_i \ge 0$ and $\sum_{i \in I} c_i < \infty$, then the family is summable.
\end{lemma}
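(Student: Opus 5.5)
The plan is to prove the cycle (i)$\Rightarrow$(ii)$\Rightarrow$(iii)$\Rightarrow$(i), and then derive the final criterion from (ii). Write $S_F := \sum_{i \in F} x_i$. The only structural tool is $p$-subadditivity: $\norm{x+y}_X^p \le \norm{x}_X^p + \norm{y}_X^p$. This makes $d(x,y) := \norm{x-y}_X^p$ a complete metric on $X$, so Cauchy arguments go through exactly as in the Banach case.

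For (i)$\Rightarrow$(ii), let $S$ be the limit of the net and choose a finite $F_0$ with $\norm{S_F - S}_X^p < \varepsilon^p/2$ for all finite $F \supseteq F_0$. For a finite $G$ disjoint from $F_0$, we have $S_G = S_{F_0 \cup G} - S_{F_0}$. The $p$-triangle inequality then gives $\norm{S_G}_X^p < \varepsilon^p$.

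For (ii)$\Rightarrow$(iii), fix an exhaustion $\{F_m\}$. Given $\varepsilon$, take $F_0$ from (ii). Since $F_0$ is finite and the $F_m$ increase to $I$, there is $M$ with $F_0 \subseteq F_m$ for all $m \ge M$. For $m' > m \ge M$, the difference $S_{F_{m'}} - S_{F_m} = S_{F_{m'} \setminus F_m}$ is a sum over a set disjoint from $F_0$, so its quasi-norm is below $\varepsilon$. Hence the partial sums are Cauchy in $d$ and converge by completeness.

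Independence of the exhaustion is handled by interleaving. Given two exhaustions $\{F_m\}$ and $\{F'_m\}$, the sets $F_m \cup F'_m$ also form an exhaustion, so its partial sums converge. This step needs a small observation: the Cauchy estimate above shows that any two sets $F, F'$ both containing $F_0$ satisfy $\norm{S_F - S_{F'}}_X^p \le 2\varepsilon^p$, because $S_F - S_{F'} = S_{F\setminus F'} - S_{F'\setminus F}$. Applying this to $F_m$, $F'_m$ and $F_m \cup F'_m$ for large $m$ shows the three limits coincide.

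For (iii)$\Rightarrow$(i), let $S$ be the common limit. I argue by contradiction that the net converges to $S$. If it does not, there is $\varepsilon > 0$ such that for every finite $F$ there is a finite $F' \supseteq F$ with $\norm{S_{F'} - S}_X \ge \varepsilon$. Enumerate $I = \{i_1, i_2, \dots\}$ and build an exhaustion recursively. Start with $G_0 = \emptyset$. At step $k$, first enlarge $G_{k-1}$ by $\{i_1,\dots,i_k\}$, then pick a finite superset $G_k$ with $\norm{S_{G_k} - S}_X \ge \varepsilon$. Then $\{G_k\}$ is an exhaustion of $I$, but its partial sums do not converge to $S$. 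This contradicts the independence of the limit in (iii). This diagonal construction is the only step needing care; it is the main, if mild, obstacle. It also shows that all three limits coincide.

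Finally, for the criterion, given $\varepsilon$, choose a finite $F_0$ with $\sum_{i \notin F_0} c_i < \varepsilon^p / C$, which is possible because $\sum_i c_i < \infty$. Then for any finite $G$ disjoint from $F_0$, the hypothesis gives $\norm{S_G}_X^p \le C \sum_{i\in G} c_i < \varepsilon^p$. This is (ii), so the family is summable.
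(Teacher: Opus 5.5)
Your proposal is correct and follows essentially the paper's argument. The Cauchy steps (i)$\Rightarrow$(ii)$\Rightarrow$(iii) and the final criterion run through the $p$-metric $d(x,y)=\norm{x-y}_X^p$ as in the paper, and only the closing diagonal step differs: you prove (iii)$\Rightarrow$(i) with an exhaustion whose partial sums stay $\varepsilon$-away from the common limit, which uses the independence clause of (iii), whereas the paper proves (iii)$\Rightarrow$(ii) by interlacing disjoint blocks of norm at least $\varepsilon$ into a non-Cauchy exhaustion, which shows that convergence along every exhaustion already forces summability.
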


\begin{proof}
For Banach spaces these equivalences are classical, see Lindenstrauss \& Tzafriri~\cite[Proposition~1.c.1]{lindenstrausstzafriri1977}, and the arguments involve only the invariant metric $d(x, y) := \norm{x - y}_X^p$ and completeness, so they transfer verbatim: (i) implies (ii) by directedness, (ii) implies (i) and (iii) since the net is then Cauchy, and if (ii) fails at some $\varepsilon$, finite blocks $G_k$ with $\norm{\sum_{i \in G_k} x_i}_X \ge \varepsilon$, chosen inductively disjoint from $E_{k+1} \cup G_1 \cup \dots \cup G_{k-1}$ for a reference exhaustion $\{E_m\}$, interlace into an exhaustion whose partial sums are not Cauchy, contradicting (iii). The criterion gives (ii) directly, choosing $F_0$ with $\sum_{i \notin F_0} c_i < \varepsilon^p / C$.
\end{proof}

Two consequences are used throughout. For $c_i \ge 0$ the net converges to the supremum, $\sum_{i \in I} c_i := \lim_m \sum_{i \in F_m} c_i = \sup_F \sum_{i \in F} c_i \in [0, \infty]$ along any exhaustion, and likewise for the increasing partial square functions $\bigl( \sum_{\lambda \in F} \abs{\alpha(\lambda)}^2 \abs{\psi_\lambda(x)}^2 \bigr)^{1/2} \nearrow S_{\mathrm{Loc}}^{\psi}(\alpha)(x) := \bigl( \sum_{\lambda \in \Lambda^0} \abs{\alpha(\lambda)}^2 \abs{\psi_\lambda(x)}^2 \bigr)^{1/2}$ and their $L^p$-quasi-norms, so that the supremum over finite subsets in condition (A) of Theorem~\ref{thm:hp-wavelet} may be computed along any exhaustion. And every atomic and wavelet series in this paper obeys $\norm{\sum_{i \in F} \mu_i a_i}_X^p \le C \sum_{i \in F} \abs{\mu_i}^p$ over finite sets with $\ell^p$-summable coefficients, hence is summable by the criterion of Lemma~\ref{lem:net-summability}, and sums such as $\sum_{\lambda \in \Lambda^0} \alpha(\lambda) \psi_\lambda$ or $\sum_j \mu_j a_j$ are understood in that sense, computable along any exhaustion.

\subsection{The local Hardy space \texorpdfstring{$h^p(\mathbb{R}^n)$}{hp(Rn)}}
\label{ssec:lhp} We recall Goldberg's atomic characterization~\cite{goldberg1979}. For $0 < p \le 1$, a measurable function $a \colon \mathbb{R}^n \to \mathbb{C}$ is called an \emph{$L^q$ $h^{p}$-atom associated with $B$} (for $1 < q \le \infty$), where $B$ is a ball of radius $r_0$ (resp.\ a cube of side length $\ell_0$), if:
\begin{enumerate}[(i)]
\item $\supp a \subseteq B$,
\item $\norm{a}_q \le \abs{B}^{1/q - 1/p}$ \quad (with $1/q = 0$ when $q = \infty$),
\item if $r_0 < 1$ (resp.\ $\ell_0 \leq 1$), then $\int x^\nu\, a(x)\, dx = 0$ for all $0 \le \abs{\nu} \le N_p$ and no vanishing moments are required if $r_0 \ge 1$ (resp.\ $\ell_0 > 1$).
\end{enumerate}
A tempered distribution $f$ belongs to $h^p(\mathbb{R}^n)$ if and only if it admits an atomic decomposition: \( f = \sum_j \lambda_j a_j, \) converging in $\mathcal{S}'(\mathbb{R}^n)$, where each $a_j$ is an $L^q$ $h^{p}$-atom and $\sum_j \abs{\lambda_j}^p < \infty$. The atomic quasi-norm:
\[
\norm{f}_{h^{p},\mathrm{at}} = \inf\biggl\{\Bigl(\sum_j \abs{\lambda_j}^p \Bigr)^{1/p}\biggr\},
\]
(the infimum taken over all such decompositions), is independent of $q$, and is equivalent to several other quasi-norms on $h^{p}$, including the local maximal-function quasi-norm:
\[
\norm{f}_{h^{p},\max} = \norm[\Big]{\sup_{0 < t < 1} \abs{\varphi_t * f}}_p,
\]
for any fixed $\varphi \in \mathcal{S}(\mathbb{R}^n)$ with $\int \varphi = 1$, where $\varphi_t(x) = t^{n}\varphi(x/t)$. See~\cite[Theorem~1, Lemma~5]{goldberg1979}.

Atoms enjoy two uniform bounds. First, there is a constant $C_{\mathrm{at}} = C_{\mathrm{at}}(n, p, q)$ such that every $L^q$ $h^{p}$-atom $a$, $1 < q \le \infty$, associated with a ball or a cube $B$, satisfies:
\begin{equation}
\label{eq:atom-hp-bound}
\norm{a}_{h^{p}, \max} \;\le\; C_{\mathrm{at}} ,
\end{equation}
second, by Hölder's Inequality on $B$ and the size condition:
\begin{equation}
\label{eq:atom-l1}
  \norm{a}_1
  \;\le\; \norm{a}_q\, \abs{B}^{1/q'}
  \;\le\; \abs{B}^{1 - 1/p} .
\end{equation}

\subsection{The real Hardy space \texorpdfstring{$H^p(\mathbb{R}^n)$}{Hp(Rn)}}
\label{ssec:HP} For $0 < p \le 1$, the space $H^p(\mathbb{R}^n)$ consists of all $f \in \mathcal{S}'(\mathbb{R}^n)$ such that:
\[
\norm{f}_{H^{p}, \max} := \norm[\Big]{\sup_{t > 0}\, \abs{\varphi_t * f}}_p < \infty,
\]
for some (equivalently, any) fixed $\varphi \in \mathcal S(\mathbb{R}^n)$ with $\int \varphi \ne 0$. Distinct choices of $\varphi$, as well as the grand maximal function, yield equivalent quasi-norms~\cite[Theorem~11]{feffermanstein1972}, \cite[Chapter~III]{stein1993}. In contrast with $h^{p}$, the supremum runs over \emph{all} scales $t > 0$, and we have the strict continuous inclusion $H^p(\mathbb{R}^n) \subsetneq h^p(\mathbb{R}^n)$.

For $1 < q \le \infty$, a measurable function $a$ is an \emph{$L^q$ $H^{p}$-atom associated with the ball or cube $B$} if:
\begin{enumerate}[(i)]
\item $\supp(a) \subseteq B$,
\item $\norm{a}_q \le \abs{B}^{1/q - 1/p}$ \quad (with $1/q = 0$ when $q = \infty$),
\item $\int x^\nu\, a(x)\, dx = 0$ for all $0 \leq \abs{\nu} \leq N_p$.
\end{enumerate}
The definition differs from its local counterpart only in \textup{(iii)}: the vanishing moments are required at \emph{every} scale. A tempered distribution $f$ belongs to $H^p(\mathbb{R}^n)$ if and only if it admits a decomposition $f = \sum_j \lambda_j a_j$, converging in $\mathcal{S}'(\mathbb{R}^n)$, into $L^q$ $H^{p}$-atoms with $\sum_j \abs{\lambda_j}^p < \infty$, and the atomic quasi-norm:
\[
\norm{f}_{H^{p},\mathrm{at}} = \inf \biggl\{ \Bigl( \sum_j \abs{\lambda_j}^p \Bigr)^{1/p}\biggr\},
\]
(the infimum over all such decompositions), is independent of $q$ up to equivalence and equivalent to $\norm{\cdot}_{H^{p}, \max}$. The case $q = \infty$ ($n = 1$) is due to Coifman~\cite{coifman1974}, the general case to Latter~\cite{latter1978}. See also~\cite[Chapter~III, Theorem~2]{stein1993} and Taibleson \& Weiss~\cite{taibleson1980}.

Molecules relax the support condition to quantitative decay. Given $\varepsilon > 0$ and a ball $B = B(x_0, r)$, write $A_0(B) = 2B$ and $A_k(B) = 2^{k+1} B \setminus 2^k B$, $k \ge 1$. A function $m \in L^q(\mathbb{R}^n)$ is a \emph{$(p, q, \varepsilon)$-molecule centered at $B$} if:
\begin{enumerate}[(i)]
\item $\norm{m}_{L^q(A_k(B))} \le 2^{-k\varepsilon} \abs{2^k B}^{1/q - 1/p}$ for all $k \ge 0$
\item $\int x^\nu\, m(x)\, dx = 0$ for all $0 \leq \abs{\nu} \le N_p$ (the integrals converging absolutely).
\end{enumerate}
Every $L^q$ $H^{p}$-atom is a $(p,q,\varepsilon)$-molecule for every $\varepsilon > 0$, and conversely there is a constant $C_{\mathrm{mol}} = C_{\mathrm{mol}}(n, p, q, \varepsilon)$ such that every $(p,q,\varepsilon)$-molecule $m$ lies in $H^{p}$ with $\norm{m}_{H^{p}} \le C_{\mathrm{mol}}$~\cite{taibleson1980}. So $f \in H^{p}$ if and only if $f = \sum_j \lambda_j m_j$ ($\mathcal{S}'(\mathbb{R}^n)$-convergence) with $(p,q,\varepsilon)$-molecules $m_j$ and $\sum_j \abs{\lambda_j}^p < \infty$, the corresponding quasi-norm being equivalent to $\norm{\cdot}_{H^{p}, \max}$. Molecular decompositions are the natural output of operators that destroy compact support, such as the convolution corrections with Schwartz functions covered in Lemma~\ref{lem:schwartz-correction} below.

Two facts used throughout, for both $h^{p}$ and $H^{p}$, are recorded here. For $\varphi \in C^{k,\delta}(\mathbb{R}^n)$, write $[\partial^\nu \varphi]_\delta := \sup_{x \ne y} \abs{\partial^\nu\varphi(x) - \partial^\nu\varphi(y)}\, \abs{x-y}^{-\delta}$ for the Hölder semi-norm of a $k$-th order derivative, $\abs{\nu} = k$. The single property of this regularity used throughout is the Hölder form of the Taylor remainder: with $P_{x_0}$ the Taylor polynomial of degree $k$ of $\varphi$ at $x_0$:
\begin{equation}
\label{eq:taylor-holder}
\abs{\varphi(x) - P_{x_0}(x)}
\;\le\; C_{n, k}
   \Bigl( \max_{\abs{\nu} = k}
   [\partial^\nu \varphi]_{\delta} \Bigr)\,
   \abs{x - x_0}^{k + \delta},
\qquad x \in \mathbb{R}^n .
\end{equation}

An explicit admissible band-limited Schwartz function is exhibited as follows: fix $\eta \in C^\infty_c(\mathbb{R}^n)$ radial and real-valued with $\eta \equiv 1$ on $\{\abs{\xi} \le 1\}$ and $\supp(\eta) \subseteq \{\abs{\xi} \le 2\}$, and set $\varphi_0 := \mathcal F^{-1}(\eta) \in \mathcal S(\mathbb{R}^n)$ (real-valued, since $\eta$ is real and even). Then:
\[
\int_{\mathbb{R}^n} \varphi_0 = \hat\varphi_0(0) = 1, \qquad \int_{\mathbb{R}^n} x^\nu\, \varphi_0(x)\, dx = i^{\abs\nu}\, \partial^\nu \hat\varphi_0(0) = 0 \quad \text{for all } \abs{\nu} \ge 1,
\]
since $\hat\varphi_0 = \eta$ is constant near the origin, so that $\varphi_0$ is an \emph{admissible Schwartz function}, that is, $\int \varphi_0 = 1$ with $\int x^\nu \varphi_0 = 0$ for $0 < \abs{\nu} \le N_p$, and:
\begin{equation}
\label{eq:admiss-p-s}
\varphi_0 \;\in\; \mathcal{S}(\mathbb{R}^n), \qquad \supp(\hat \varphi_0) \subseteq \{\abs{\xi} \le 2\} \text{ is compact} .
\end{equation}

\subsection{The homogeneous dyadic tent space \texorpdfstring{$T_{p}$}{Tp}}
For context and contrast, we recall the homogeneous dyadic tent space $T_{p}$, the discrete model of the tent spaces of Coifman, Meyer \& Stein~\cite{coifman1985}. Given a sequence $\alpha = \{\alpha(Q)\}_{Q \in \mathcal{D}}$, define its homogeneous square function by:
\begin{equation}
\label{eq:S-p}
S(\alpha)(x) := \biggl( \sum_{\substack{Q \in \mathcal{D} \\ x \in Q}} \abs{\alpha(Q)}^2\, \abs{Q}^{-1} \biggr)^{1/2}.
\end{equation}
The space $T_{p}$ consists of the sequences with $S(\alpha) \in L^p(\mathbb{R}^n)$, with quasi-norm $\norm{\alpha}_{T_{p}} = \norm{S(\alpha)}_p$. The dyadic model coincides, up to the renormalization $\alpha(Q) = \abs{Q}^{1/2} s_Q$, with the Frazier-Jawerth sequence space $\dot f^{0}_{p,2}$~\cite{frazier-jawerth1990}, whose atomic decomposition is due to Frazier \& Jawerth: a \emph{$T_{p}$-atom} is a sequence supported in a tent $\mathcal{T}(R)$, $R \in \mathcal{D}$, with $\sum_Q \abs{\alpha(Q)}^2 \le \abs{R}^{1 - 2/p}$, atoms have quasi-norm $\lesssim 1$, and every $\alpha \in T_{p}$ is $\sum_j \lambda_j \alpha_j$ with atoms $\alpha_j$ and $\sum_j \abs{\lambda_j}^p < \infty$~\cite[Sections~5--6]{frazier-jawerth1990}. The wavelet coefficient sequence of an element of $H^p(\mathbb{R}^n)$ belongs to $T_{p}$ and conversely, Meyer's characterization of $H^1$ at $p = 1$~\cite[Chapter~5, Theorem~1, p.~143]{meyer1992} and, for $0 < p \le 1$, a consequence of $H^{p} = \dot F^{0}_{p,2}$ and the $\varphi$-transform~\cite{frazier-jawerth1990}. Our space $t_{p}$ is the non-homogeneous, small cube analogue of $T_{p}$, whose structure theory parallels that of the inhomogeneous $f^{0}_{p,2}$ but is developed directly in the tent space language, in parallel with~\cite{elaoumari2025}.

\section{The local dyadic tent space \texorpdfstring{$t_{p}$}{tp}}
\label{sec:tp}

The structure of this section mirrors~\cite[Section~4.1]{elaoumari2025}, the modifications required for $p < 1$ pointed out as they arise. To the best of our knowledge, $t_{p}$ appears here for the first time for $p < 1$, its case $p = 1$ being the space $t_1$ of the thesis, and its atomic theory is the first for a local dyadic tent space in the quasi-Banach range.

\subsection{Definition and basic properties}
The space carries two layers of scalar coefficients on disjoint index copies: a \emph{father layer} $\beta = \{\beta_k\}_{k \in \mathbb{Z}^n}$ indexed by the unit lattice, whose cubes $Q_k := k + [0, 1)^n$ tile $\mathbb{R}^n$, and a \emph{mother layer} $\alpha = \{\alpha(Q)\}_{Q \in \mathcal{D}^{0}}$ indexed by the small dyadic cubes, the unit cubes of $\mathcal{D}^{0}$ carrying wavelet coefficients of their own alongside the lattice copy. For a pair $(\beta, \alpha)$, we define its \emph{local square function} by:
\begin{equation}\label{eq:Sz-p}
S_{\mathrm{loc}}(\beta, \alpha)(x) := \biggl( \sum_{k \,:\, x \in Q_k} \abs{\beta_k}^2 \;+\; \sum_{x \in Q \in \mathcal{D}^{0}} \abs{\alpha(Q)}^2\, \abs{Q}^{- 1} \biggr)^{1/2},
\end{equation}
the father terms self-normalized since $\abs{Q_k} = 1$. A pair supported on a single layer is identified with the corresponding sequence, and we write $S_{\mathrm{loc}}(\alpha) := S_{\mathrm{loc}}(0, \alpha)$ and $S_{\mathrm{loc}}(\beta) := S_{\mathrm{loc}}(\beta, 0)$ accordingly, likewise for the truncated variants below. The mother layer of~\eqref{eq:Sz-p} matches, at $p = 1$, the homogeneous size functional of~\cite[Chapter~5, p.~145]{meyer1992} in the local form of~\cite[Definition~4.1.3]{elaoumari2025}, the father layer, which the thesis and the classical theory carry as a separate $\ell^p$ clause, being incorporated into the space itself in the present extended form.

The \emph{local dyadic tent space} $t_{p}$ is then defined as the set of pairs $(\beta, \alpha)$ with $S_{\mathrm{loc}}(\beta, \alpha) \in L^p(\mathbb{R}^n)$, equipped with the quasi-norm \(\norm{(\beta, \alpha)}_{t_{p}} := \norm{S_{\mathrm{loc}}(\beta, \alpha)}_p\). The unit cubes tiling $\mathbb{R}^n$ disjointly, each point sees exactly one father term, so the father layer sits inside $t_{p}$ isometrically: $\norm{(\beta, 0)}_{t_{p}}^p = \sum_{k \in \mathbb{Z}^n} \abs{\beta_k}^p$, an exact identity.

When $p = 1$, $\norm{\cdot}_{t_{p}}$ is a genuine norm, but when $p < 1$, it is a $p$-norm, and the corresponding topology is metrizable by $d(a, b) = \norm{a - b}_{t_{p}}^p$.

\begin{proposition}
\label{prop:tp-quasi-norm} $(t_{p}, \norm{\cdot}_{t_{p}})$ is a quasi-normed vector space. When $p = 1$, it is a normed vector space.
\end{proposition}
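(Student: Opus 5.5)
The plan is to verify the quasi-norm axioms directly from the definition of $S_{\mathrm{loc}}$ in~\eqref{eq:Sz-p}. The pointwise behaviour of $S_{\mathrm{loc}}$ carries everything, and the $L^p$ quasi-norm then transfers it. Concretely, $S_{\mathrm{loc}}(\beta,\alpha)(x)$ is the $\ell^2$ norm of the vector $\bigl(\beta_k \mathbf 1_{Q_k}(x)\bigr)_k \oplus \bigl(\alpha(Q)\abs{Q}^{-1/2}\mathbf 1_Q(x)\bigr)_{Q\in\mathcal D^0}$. Hence for each fixed $x$, $(\beta,\alpha)\mapsto S_{\mathrm{loc}}(\beta,\alpha)(x)\in[0,\infty]$ is a seminorm on the space of all pairs. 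Measurability of $S_{\mathrm{loc}}$ is clear, since it is a countable sum of nonnegative simple functions.

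The steps, in order, are as follows.

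\emph{Homogeneity.} $S_{\mathrm{loc}}(c\beta,c\alpha)=\abs{c}\,S_{\mathrm{loc}}(\beta,\alpha)$ pointwise, so $\norm{c(\beta,\alpha)}_{t_p}=\abs{c}\norm{(\beta,\alpha)}_{t_p}$.

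\emph{Subadditivity.} Minkowski's inequality in $\ell^2$ gives the pointwise bound $S_{\mathrm{loc}}(\beta+\beta',\alpha+\alpha')\le S_{\mathrm{loc}}(\beta,\alpha)+S_{\mathrm{loc}}(\beta',\alpha')$. Raising to the power $p$ and using $(s+t)^p\le s^p+t^p$ yields $\norm{\cdot+\cdot}_{t_p}^p\le\norm{\cdot}_{t_p}^p+\norm{\cdot}_{t_p}^p$. This shows that $t_p$ is closed under addition and scalar multiplication, hence a vector space. It also gives $p$-subadditivity and therefore the quasi-triangle inequality with constant $2^{1/p-1}$. At $p=1$ the same computation is Minkowski in $L^1$, giving a genuine triangle inequality.

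\emph{Definiteness.} If $\norm{(\beta,\alpha)}_{t_p}=0$, then $S_{\mathrm{loc}}=0$ almost everywhere. Each term of the sum is nonnegative, so the following holds for every $k$ and every $Q\in\mathcal D^0$: $\abs{\beta_k}^2\mathbf 1_{Q_k}\le S_{\mathrm{loc}}^2$ and $\abs{\alpha(Q)}^2\abs{Q}^{-1}\mathbf 1_Q\le S_{\mathrm{loc}}^2$. Since $\abs{Q_k}=1>0$ and $\abs{Q}>0$, we get $\beta_k=0$ and $\alpha(Q)=0$.

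The only point needing care, and the one I expect to be the main obstacle, is the definiteness step. It uses the fact that the father and mother layers sit on disjoint index copies, so a unit cube $Q\in\mathcal D^0$ and the lattice cube $Q_k$ with the same support contribute separately. Positivity of each summand, as above, settles this. No cancellation between layers can occur because the terms enter as squared moduli.
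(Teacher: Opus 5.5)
Your proof is correct and follows the paper's route: the pointwise triangle inequality in $\ell^2$ for $S_{\mathrm{loc}}$, then $p$-subadditivity of $t \mapsto t^p$ and integration to obtain the $p$-triangle inequality, which reduces to the ordinary triangle inequality at $p=1$. You also check definiteness and measurability, which the paper leaves implicit under ``immediate''.
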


\begin{proof}
Vector space structure and absolute homogeneity are immediate. The $p$-triangle inequality is obtained through integration, $p$-sub-additivity and the triangle inequality for the $\ell^2$-sum in the square function.
\end{proof}

\begin{lemma}
\label{lem:density-tp} $\displaystyle \ell^2_{\mathrm{finite}}(\mathbb{Z}^n \sqcup \mathcal{D}^{0}) := \bigcup_{\substack{F \subseteq \mathbb{Z}^n \sqcup \mathcal{D}^{0} \\ \text{finite}}} \ell^2(F)$, the finitely supported pairs, is dense in $t_{p}$.
\end{lemma}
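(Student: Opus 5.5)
Fix $(\beta,\alpha) \in t_{p}$ and an exhaustion $\{F_m\}$ of the countable index set $\mathbb{Z}^n \sqcup \mathcal{D}^{0}$ (Lemma~\ref{lem:net-summability}). Let $(\beta,\alpha)_m$ be the truncation to $F_m$, that is, the coefficients on $F_m$ kept and the others set to zero. Each truncation is finitely supported, so it lies in $\ell^2_{\mathrm{finite}}$. It also lies in $t_{p}$: its square function is a finite sum of terms $\abs{\beta_k}\mathbf{1}_{Q_k}$ and $\abs{\alpha(Q)}\abs{Q}^{-1/2}\mathbf{1}_Q$ in $\ell^2$, hence bounded and compactly supported. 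The plan is to show $\norm{(\beta,\alpha) - (\beta,\alpha)_m}_{t_{p}} \to 0$ by dominated convergence applied to $S_{\mathrm{loc}}$ of the tail.

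The key observation is that the tail $(\beta,\alpha) - (\beta,\alpha)_m$ is the truncation of $(\beta,\alpha)$ to the complement $F_m^c$. Its square function at $x$ is therefore
\[
S_{\mathrm{loc}}\bigl((\beta,\alpha)-(\beta,\alpha)_m\bigr)(x)^2 = \sum_{k \notin F_m,\, x\in Q_k} \abs{\beta_k}^2 + \sum_{Q \notin F_m,\, x \in Q} \abs{\alpha(Q)}^2\abs{Q}^{-1},
\]
which is pointwise at most $S_{\mathrm{loc}}(\beta,\alpha)(x)^2$.

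Next, the pointwise limit is zero almost everywhere. Since $S_{\mathrm{loc}}(\beta,\alpha) \in L^p$, it is finite almost everywhere. At any such $x$, the series defining $S_{\mathrm{loc}}(\beta,\alpha)(x)^2$ is a convergent series of nonnegative terms indexed by the countably many cubes containing $x$. Its tail over $F_m^c$ is dominated termwise and tends to $0$ as $F_m \uparrow$, by dominated convergence for series (equivalently, the net criterion of Lemma~\ref{lem:net-summability} for nonnegative terms).

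Finally, the integrands $S_{\mathrm{loc}}(\text{tail})^p$ are dominated by $S_{\mathrm{loc}}(\beta,\alpha)^p \in L^1$ and tend to $0$ almost everywhere. Dominated convergence then gives $\norm{\text{tail}}_{t_{p}}^p \to 0$, which is the claim.

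The main point needing care is finiteness almost everywhere of $S_{\mathrm{loc}}$, and this follows directly from $L^p$-integrability; no step is a genuine obstacle. The quasi-norm plays no role beyond computing with $\norm{\cdot}_p^p$. The argument also yields convergence along every exhaustion, so by Lemma~\ref{lem:net-summability} the family $\{\beta_k e_k\} \cup \{\alpha(Q)\mathbf{1}_{\{Q\}}\}$ is summable to $(\beta,\alpha)$ in $t_{p}$, which is slightly more than density.
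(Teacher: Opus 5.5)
Your proof is correct. It differs from the paper's in where dominated convergence is applied.

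\textbf{The paper's route.} It separates the layers. The father layer is disposed of by the exact identity $\norm{(\beta,0)-(\beta\mathbf{1}_F,0)}_{t_{p}}^p=\sum_{k\notin F}\abs{\beta_k}^p$. On the mother layer, dominated convergence is applied to $S_{\mathrm{loc}}(\alpha)-S_{\mathrm{loc}}(\alpha\mathbf{1}_{F_m})$. The result is then transferred to the tail through the pointwise bound $S_{\mathrm{loc}}(\alpha-\alpha\mathbf{1}_{F_m})^2=S_{\mathrm{loc}}(\alpha)^2-S_{\mathrm{loc}}(\alpha\mathbf{1}_{F_m})^2\le 2\,S_{\mathrm{loc}}(\alpha)\bigl(S_{\mathrm{loc}}(\alpha)-S_{\mathrm{loc}}(\alpha\mathbf{1}_{F_m})\bigr)$ followed by Cauchy--Schwarz. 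This gives the paper's displayed inequality with the factor $\sqrt{2}$.

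\textbf{Your route.} You apply dominated convergence directly to the square function of the tail. It is dominated by $S_{\mathrm{loc}}(\beta,\alpha)$ by solidity and tends to $0$ wherever $S_{\mathrm{loc}}(\beta,\alpha)$ is finite. This treats both layers at once and needs no interpolation step. It is the same mechanism the paper itself uses later for the unconditional convergence in Theorem~\ref{thm:tp-atomic-restate}.

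Your argument also shows that the tail quasi-norm is non-increasing as the retained finite set grows. That monotonicity is the cleanest justification for your closing summability remark. Citing Lemma~\ref{lem:net-summability} there presupposes completeness of $t_{p}$, which is established only afterwards in Theorem~\ref{thm:tp-quasi-banach}, and that proof uses the present lemma.

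\textbf{What the paper's route buys.} It gives a quantitative transfer inequality, controlling the tail by $\norm{S_{\mathrm{loc}}(\alpha)-S_{\mathrm{loc}}(\alpha\mathbf{1}_{F_m})}_p^{p/2}$. Along the way it records the two-sided comparison~\eqref{eq:tp-norm} between $\norm{\cdot}_{t_{p}}$ and $\ell^2(F)$ on finitely supported sequences. The completeness proof relies on that comparison, though it is not part of the lemma as stated.
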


\begin{proof}
The father layer is immediate: by the exact identity, $\norm{(\beta, 0) - (\beta \mathbf{1}_F, 0)}_{t_{p}}^p = \sum_{k \notin F} \abs{\beta_k}^p \to 0$ along the net of finite subsets, the tail of a convergent series, and the two layers combine by $p$-sub-additivity. It remains to treat the mother layer.
Notice first that Monotone Convergence tells us that for a sequence $\alpha$, we have: $\norm{S_{\mathrm{loc}}(\alpha)}_2 = \norm{\alpha}_{\ell^2(\mathcal{D}^{0})}$. This, alongside Hölder’s Inequality, gives us for any finite subset $F$ of $\mathcal{D}^{0}$ and any $\alpha \in \ell^2(F)$:
\begin{equation}
\label{eq:tp-norm}
\biggl(\inf_{Q \in F} \abs*{Q} \biggr)^{1 - p/2} \, \norm*{\alpha}_{\ell^2(F)}^p \leq \norm*{(0, \alpha)}_{t_{p}}^p \leq \biggl(\sum_{Q \in F} \abs*{Q} \biggr)^{1 - p/2} \, \norm*{\alpha}_{\ell^2(F)}^p.
\end{equation}
The upper bound is Hölder's Inequality on the socle at the exponent $2/p$, and the lower bound follows by comparing $\int S_{\mathrm{loc}}(\alpha)^2 = \norm{\alpha}_{\ell^2(F)}^2$ with $\norm{S_{\mathrm{loc}}(\alpha)}_\infty^{2 - p} \int S_{\mathrm{loc}}(\alpha)^p$ through the pointwise bound $S_{\mathrm{loc}}(\alpha) \le \norm{\alpha}_{\ell^2(F)} \bigl( \inf_{Q \in F} \abs{Q} \bigr)^{-1/2}$. Given $(0, \alpha) \in t_{p}$ and an exhaustion $\{F_m\}_{m \in \mathbb{N}}$ of $\mathcal{D}^{0}$, the sequence $\alpha \mathbf{1}_{F_m}$ satisfies $S_{\mathrm{loc}}(\alpha \mathbf{1}_{F_m}) \nearrow S_{\mathrm{loc}}(\alpha)$ pointwise (monotone increase as the inner sum involves more terms), and is dominated by $S_{\mathrm{loc}}(\alpha) \in L^p$. Dominated convergence in $L^p$ for $0 < p \le 1$ gives $\norm{S_{\mathrm{loc}}(\alpha) - S_{\mathrm{loc}}(\alpha \mathbf{1}_{F_m})}_p \to 0$, which gives $\norm{(0, \alpha - \alpha \mathbf{1}_{F_m})}_{t_{p}} \to 0$, identically to the proof of~\cite[Lemma~4.1.1]{elaoumari2025}, alongside the approximation step:
\[
\norm{(0, \alpha - \alpha \mathbf{1}_{F_m})}_{t_{p}}^p \le \sqrt{2} \, \norm{(0, \alpha)}_{t_{p}}^{p/2} \, \norm{S_{\mathrm{loc}}(\alpha) - S_{\mathrm{loc}}(\alpha \mathbf{1}_{F_m})}_p^{p/2}. \qedhere
\]
\end{proof}

\begin{theorem}
\label{thm:tp-quasi-banach} $(t_{p}, \norm{\cdot}_{t_{p}})$ is a quasi-Banach space. At $p = 1$, it is a Banach space.
\end{theorem}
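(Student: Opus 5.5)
We prove completeness, the only property not already contained in Proposition~\ref{prop:tp-quasi-norm}. Since $\norm{\cdot}_{t_{p}}$ is $p$-sub-additive, $d(a,b) = \norm{a-b}_{t_{p}}^p$ is a translation-invariant metric. It therefore suffices to show that every Cauchy sequence converges, or equivalently that every series $\sum_j (\beta^j,\alpha^j)$ with $\sum_j \norm{(\beta^j,\alpha^j)}_{t_{p}}^p < \infty$ converges in $t_{p}$. We follow the Cauchy-sequence route, the dyadic analogue of the Riesz--Fischer argument, and carry the two layers jointly as one sequence $\gamma = \{\gamma(I)\}_{I \in \mathbb{Z}^n \sqcup \mathcal{D}^{0}}$ with weights $w(I) := \abs{I}^{-1}$ ($=1$ on the father copy). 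In this notation $S_{\mathrm{loc}}(\gamma)(x) = \bigl(\sum_{x \in I} \abs{\gamma(I)}^2 w(I)\bigr)^{1/2}$.

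\emph{Step 1: coordinatewise convergence.} For a fixed index $I$ we have the pointwise bound $S_{\mathrm{loc}}(\gamma) \ge \abs{\gamma(I)}\, \abs{I}^{-1/2} \mathbf{1}_I$. Integrating gives
\[
\abs{\gamma(I)}^p\, \abs{I}^{1-p/2} \le \norm{\gamma}_{t_{p}}^p .
\]
Applied to differences $\gamma_m - \gamma_l$ of a Cauchy sequence $(\gamma_m)$, this shows that each coordinate $\gamma_m(I)$ is Cauchy in $\mathbb{C}$. Let $\gamma(I) := \lim_m \gamma_m(I)$ be the coordinatewise limit.

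\emph{Step 2: the limit lies in $t_{p}$ and is the limit in norm.} Fix $\varepsilon > 0$ and choose $M$ with $\norm{\gamma_m - \gamma_l}_{t_{p}}^p < \varepsilon$ for all $m, l \ge M$. For a finite set $F$ of indices, the truncated square function $S_{\mathrm{loc}}((\gamma_m - \gamma_l)\mathbf{1}_F)$ is dominated pointwise by $S_{\mathrm{loc}}(\gamma_m - \gamma_l)$. Since $F$ is finite, letting $l \to \infty$ makes this truncated square function converge pointwise to $S_{\mathrm{loc}}((\gamma_m - \gamma)\mathbf{1}_F)$. Fatou's lemma then gives $\norm{S_{\mathrm{loc}}((\gamma_m-\gamma)\mathbf{1}_F)}_p^p \le \varepsilon$. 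Next let $F$ increase along an exhaustion. Monotone convergence, as in the discussion following Lemma~\ref{lem:net-summability}, yields $\norm{\gamma_m - \gamma}_{t_{p}}^p \le \varepsilon$ for all $m \ge M$. In particular $\gamma = \gamma_M - (\gamma_M - \gamma) \in t_{p}$ by the $p$-triangle inequality, and $\gamma_m \to \gamma$ in $t_{p}$.

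At $p=1$ the same argument, together with the norm property of Proposition~\ref{prop:tp-quasi-norm}, gives a Banach space. The only delicate point is the interchange of limits in Step 2, namely justifying passage to the limit inside an $L^p$ quasi-norm when $p<1$. Truncating to finite $F$ before applying Fatou removes any dependence on dominated convergence, so we expect no real obstacle there. Lemma~\ref{lem:density-tp} is not needed for this argument.
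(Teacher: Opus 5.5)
Your proof is correct. Its first step, taking coordinatewise limits via the single-index bound $\abs{\gamma(I)}^p \abs{I}^{1-p/2} \le \norm{\gamma}_{t_{p}}^p$, is the paper's first step as well: the paper gets the limit from Cauchy-ness in each finite $\ell^2(F)$, through the lower half of~\eqref{eq:tp-norm}.

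The two routes differ in the second step. The paper closes with an $\varepsilon/3$ split through the $p$-triangle inequality,
$\norm{a - a^{(j)}}_{t_{p}}^p \le \norm{a - a\mathbf{1}_{F_m}}_{t_{p}}^p + \norm{(a - a^{(j)})\mathbf{1}_{F_m}}_{t_{p}}^p + \norm{a^{(j)}\mathbf{1}_{F_m} - a^{(j)}}_{t_{p}}^p$.
It controls the middle term by finite-dimensionality (the upper half of~\eqref{eq:tp-norm}) and the two tails by the truncation argument of Lemma~\ref{lem:density-tp}. As written, that argument silently needs two further facts:
\begin{enumerate}
\item $a \in t_{p}$ must be known beforehand, so that its tail can be made small;
\item the tail of $a^{(j)}$ must be small uniformly in $j$, which follows from the Cauchy property together with the solidity $S_{\mathrm{loc}}(b\mathbf{1}_{F^c}) \le S_{\mathrm{loc}}(b)$.
\end{enumerate}
The paper states neither.

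Your Fatou-plus-monotone-convergence argument is lower semicontinuity of $\norm{\cdot}_{t_{p}}$ under coordinatewise convergence. It gives membership of the limit and convergence at once, with the explicit bound $\norm{\gamma_m - \gamma}_{t_{p}}^p \le \varepsilon$ for $m \ge M$, and it does not use Lemma~\ref{lem:density-tp} at all. The paper's version reuses machinery it already has (the finite-dimensional comparison and the density lemma), at the price of those two unstated steps.
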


\begin{proof}
We mirror the homogeneous argument of~\cite[Chapter~5]{meyer1992}, in the local form of~\cite[Theorem~4.1.1]{elaoumari2025}, with two modifications. First, the completeness of $\ell^2(F)$ remains a finite dimensional fact independent of $p$, so a Cauchy sequence of pairs $\{a^{(j)}\}$ in $t_{p}$ restricted to any finite $F \subseteq \mathbb{Z}^n \sqcup \mathcal{D}^{0}$ is Cauchy in $\ell^2(F)$ and converges to some $a_F \in \ell^2(F)$. Compatibility across an exhaustion gives a well-defined limit pair $a$. Second, the final $\varepsilon/3$-style argument uses the $p$-triangle inequality instead of the ordinary triangle inequality:
\[
\norm{a - a^{(j)}}_{t_{p}}^p \le \norm{a - a\, \mathbf{1}_{F_m}}_{t_{p}}^p + \norm{a\, \mathbf{1}_{F_m} - a^{(j)} \mathbf{1}_{F_m}}_{t_{p}}^p + \norm{a^{(j)} \mathbf{1}_{F_m} - a^{(j)}}_{t_{p}}^p. \qedhere
\]
\end{proof}

\subsection{The compression \texorpdfstring{$T_{p} \to t_{p}$}{Tp to tp}}
For $\alpha \in T_{p}$ and $x \in Q_k := k + [0, 1)^n$, the large scale part of the size functional~\eqref{eq:S-p} runs over the cubes $Q$ with $x \in Q$ and $\ell(Q) > 1$, which are exactly the dyadic ancestors of $Q_k$, the same chain for every $x \in Q_k$: the large scale mass is constant on each unit cube. We may thus compress it into a single father coefficient per cell, defining the \emph{compression} $\Theta \colon T_{p} \to t_{p}$ by:
\[
\Theta \alpha \;:=\; \bigl( \beta(\alpha),\, \alpha|_{\mathcal{D}^{0}} \bigr),
\qquad
\beta(\alpha)_k \;:=\; \Bigl( \sum_{Q \supsetneq Q_k} \abs{\alpha(Q)}^2\, \abs{Q}^{-1} \Bigr)^{1/2},
\]
the sum over the ancestors of $Q_k$, consistent with the identity $\sum_{k \in \mathbb{Z}^n} \abs{\beta_k}^2 = \sum_{\lambda \in \Lambda \setminus \Lambda^0} \abs{\alpha(\lambda)}^2$ of~\cite[Chapter~3, Section~10, p.~112]{meyer1992}, there derived from $V_0$ being the Hilbert direct sum of the spaces $W_j$, $j < 0$: the exchange of the large scale mass for father layer mass, which the compression refines from a global identity to one per unit cell. We call a father sequence $\beta$ \emph{sibling-constant} if $\beta_k = \beta_{k'}$ whenever $Q_k$ and $Q_{k'}$ have the same dyadic parent.

\begin{proposition}
\label{prop:Tp-tp-embedding} Let $0 < p \le 1$.
\begin{enumerate}[(i)]
\item For every $\alpha \in T_{p}$ and every $x \in \mathbb{R}^n$, $S(\alpha)(x) = S_{\mathrm{loc}}(\Theta\alpha)(x)$. In particular $\norm{\Theta\alpha}_{t_{p}} = \norm{\alpha}_{T_{p}}$: the compression is an exact isometry.
\item The image of $\Theta$ is the set of pairs $(\beta, \alpha^\circ) \in t_{p}$ whose father layer is nonnegative and sibling-constant. On the closed subspace of sibling-constant pairs, the \emph{parent-scale realization map} $J$, defined by $\alpha|_{\mathcal{D}^{0}} := \alpha^\circ$, $\alpha(P) := 2^{n/2}\, \beta_{k(P)}$ for each dyadic cube $P$ of side $2$, with $k(P)$ any child cell of $P$, and $\alpha(Q) := 0$ for $\ell(Q) > 2$, is a linear isometry into $T_{p}$ with $\Theta J(\beta, \alpha^\circ) = (\abs{\beta}, \alpha^\circ)$. In particular $\Theta J$ is the identity on the image of $\Theta$.
\item Forgetting the father layer, $\norm{(0, \alpha|_{\mathcal{D}^{0}})}_{t_{p}} \le \norm{\alpha}_{T_{p}}$, with equality if and only if $\alpha$ vanishes on the large scales, and on such sequences $\alpha \mapsto (0, \alpha|_{\mathcal{D}^{0}})$ is an isometry onto the mother layer of $t_{p}$.
\end{enumerate}
\end{proposition}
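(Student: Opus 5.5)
The whole proposition reduces to a pointwise computation of the square functions, organized by the unit cell $Q_k$ containing the point. For (i), fix $x\in\mathbb{R}^n$ and let $k$ be the unique lattice point with $x\in Q_k$. The dyadic cubes containing $x$ split into two families. The first is the small cubes $Q\in\mathcal D^0$ with $x\in Q$. The second is the cubes with $\ell(Q)>1$; these contain $x$ exactly when they contain $Q_k$, that is, when they are strict ancestors of $Q_k$. This uses the nestedness of dyadic cubes: a dyadic cube of side $\ge 2$ meeting $Q_k$ contains it. Hence
\[
S(\alpha)(x)^2=\sum_{Q\supsetneq Q_k}\abs{\alpha(Q)}^2\abs{Q}^{-1}+\sum_{x\in Q\in\mathcal D^0}\abs{\alpha(Q)}^2\abs{Q}^{-1}=\abs{\beta(\alpha)_k}^2+\sum_{x\in Q\in\mathcal D^0}\abs{\alpha(Q)}^2\abs{Q}^{-1}.
\]
By \eqref{eq:Sz-p}, and since $x$ lies in exactly one $Q_k$, the right-hand side is $S_{\mathrm{loc}}(\Theta\alpha)(x)^2$. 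Integrating the $p$-th powers gives the isometry. It also shows $\Theta\alpha\in t_p$, so $\Theta$ is well defined, whether or not the ancestor sums are finite a priori.

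For (ii), the inclusion of the image of $\Theta$ in the stated set is direct. $\beta(\alpha)_k\ge0$ by definition. Siblings $Q_k,Q_{k'}$ share their parent and hence all strict ancestors, so $\beta(\alpha)_k=\beta(\alpha)_{k'}$. For the converse and the properties of $J$, take a sibling-constant pair $(\beta,\alpha^\circ)$; then $J$ is well defined because $\beta_{k(P)}$ does not depend on the chosen child. Linearity of $J$ is clear. The only nonzero ancestor term of $Q_k$ is its parent $P$, with $\abs{P}=2^n$, so
\[
\beta(J(\beta,\alpha^\circ))_k=\bigl(2^n\abs{\beta_k}^2\,2^{-n}\bigr)^{1/2}=\abs{\beta_k},
\]
which gives $\Theta J(\beta,\alpha^\circ)=(\abs\beta,\alpha^\circ)$. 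Then $\norm{J(\beta,\alpha^\circ)}_{T_p}=\norm{\Theta J(\beta,\alpha^\circ)}_{t_p}$ by (i), and this equals $\norm{(\beta,\alpha^\circ)}_{t_p}$ because $S_{\mathrm{loc}}$ depends only on $\abs{\beta_k}$. This is where finiteness of $\norm{J(\cdot)}_{T_p}$ comes from. If $\beta\ge0$ we get $\Theta J(\beta,\alpha^\circ)=(\beta,\alpha^\circ)$, so nonnegative sibling-constant pairs lie in the image, and $\Theta J$ is the identity there.

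Closedness of the sibling-constant subspace follows from coordinatewise continuity. Convergence in $t_p$ implies convergence of each coordinate: apply \eqref{eq:tp-norm}-type lower bounds, or simply note $\abs{\beta_k}\le S_{\mathrm{loc}}$ on $Q_k$, so $\abs{\beta_k}^p\le\norm{(\beta,\alpha)}_{t_p}^p$. The linear relations $\beta_k=\beta_{k'}$ are then preserved under limits.

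For (iii), the pointwise identity in (i) gives $S_{\mathrm{loc}}(0,\alpha|_{\mathcal D^0})(x)^2=S(\alpha)(x)^2-\abs{\beta(\alpha)_k}^2\le S(\alpha)(x)^2$, hence the inequality. For equality in $L^p$, the integrand difference is nonnegative. Equality therefore forces $S_{\mathrm{loc}}(0,\alpha|_{\mathcal D^0})=S(\alpha)$ almost everywhere wherever both are finite, and they are finite a.e. since $S(\alpha)\in L^p$. This forces $\beta(\alpha)_k=0$ for every $k$, i.e.\ $\alpha$ vanishes on all large cubes, since every large cube is an ancestor of some $Q_k$. The converse direction and the isometry onto the mother layer are immediate, since any mother-layer sequence extended by zero lies in $T_p$ with the same norm.

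The only delicate point is this equality case for $p<1$, where one must argue from equality of integrals of ordered nonnegative functions with finite integrals rather than from strict convexity. That is routine.
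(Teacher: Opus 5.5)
Your proof is correct and follows the paper's argument essentially step for step: the pointwise splitting of $S(\alpha)(x)^2$ on each unit cell into the ancestor chain of $Q_k$ and the small cubes, the single parent-scale term for $J$ (with finiteness and the isometry read off from that pointwise identity), and solidity together with an almost-everywhere equality argument for (iii). Your one addition is the closedness of the sibling-constant subspace, via $\abs{\beta_k}^p \le \norm{(\beta,\alpha)}_{t_{p}}^p$, which the paper asserts without proof.
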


\begin{proof}
For \textup{(i)}, fix $x \in Q_k$. By the ancestor observation and the self-normalization of the father terms:
\[
S(\alpha)(x)^2
\;=\; \sum_{Q \supsetneq Q_k} \abs{\alpha(Q)}^2\, \abs{Q}^{-1}
\;+ \sum_{x \in Q \in \mathcal{D}^{0}} \abs{\alpha(Q)}^2\, \abs{Q}^{-1}
\;=\; S_{\mathrm{loc}}(\Theta\alpha)(x)^2 ,
\]
the first sum being $\beta(\alpha)_k^2$ by definition, and taking $L^p$-quasi-norms gives the isometry. For \textup{(ii)}, the ancestor chain of $Q_k$ is determined by its dyadic parent, so two sibling cells share it and $\beta(\alpha)$ is nonnegative and sibling-constant. Conversely, for a sibling-constant pair, the sequence $J(\beta, \alpha^\circ)$ has, at $x \in Q_k$, a single large scale term, $\abs{\alpha(P)}^2 \abs{P}^{-1} = 2^{n} \beta_k^2 \cdot 2^{-n} = \beta_k^2$, so $S(J(\beta, \alpha^\circ)) = S_{\mathrm{loc}}(\abs{\beta}, \alpha^\circ) = S_{\mathrm{loc}}(\beta, \alpha^\circ)$ pointwise, the size functionals seeing only $\abs{\beta_k}$. This gives at once the isometry of $J$, the identity $\Theta J(\beta, \alpha^\circ) = (\abs{\beta}, \alpha^\circ)$, and, for $\beta \ge 0$, the realization of the pair, and $J$ is clearly linear. For \textup{(iii)}, $S_{\mathrm{loc}}(0, \alpha|_{\mathcal{D}^{0}}) \le S_{\mathrm{loc}}(\Theta\alpha)$ pointwise by solidity, with equality of the $L^p$-quasi-norms exactly when $\alpha(Q) = 0$ for every $\ell(Q) > 1$, where \textup{(i)} reads $S(\alpha) = S_{\mathrm{loc}}(0, \alpha|_{\mathcal{D}^{0}})$ pointwise.
\end{proof}

The compression realizes $t_{p}$ as the local form of $T_{p}$ with nothing discarded, by \textup{(i)}: small scales pass unchanged and large scales compress losslessly to scale one, mirroring Goldberg's splitting of $f \in h^{p}$ into $f - \varphi*f \in H^{p}$ and the unit-scale, cancellation-free $\varphi*f \in L^p(\mathbb{R}^n)$. By \textup{(ii)}, $t_{p}$ carries strictly finer scale-one information, its father layer varying freely rather than being constant on parent blocks. We develop the structure theory of $t_{p}$ directly rather than transporting it through the compression, since its atomic theory is what interfaces with the small-cube wavelet coefficients in Section~\ref{sec:hp}.

\subsection{\texorpdfstring{$t_{p}$}{tp}-atoms}

A pair $a = (\beta, \alpha)$ is a \emph{$t_{p}$-atom} if there exists a cube $R \in \mathcal{D}^{0}$ such that:
\begin{enumerate}[(i)]
\item $\scl(\alpha) \subseteq R$ and $\beta_k = 0$ unless $Q_k \subseteq R$
\item \begin{equation}
\label{eq:tp-atom-mass}
  \sum_{Q_k \subseteq R} \abs{\beta_k}^2 \;+\; \sum_{Q \subseteq R} \abs{\alpha(Q)}^2
  \;\le\; \abs{R}^{1 - 2/p} .
\end{equation}
\end{enumerate}
Since the bases lie in $\mathcal{D}^{0}$, a father layer position is present only when $\ell(R) = 1$ and $R = Q_k$, so the father layer of an atom is a single lattice position, and the coordinate pairs $e_k := (\mathbf{1}_{\{k\}}, 0)$ are themselves $t_{p}$-atoms with base $Q_k$, the normalization~\eqref{eq:tp-atom-mass} holding with equality budget $\abs{Q_k}^{1 - 2/p} = 1$.
Any such $R$ is called a \emph{base} of $\alpha$, and we say $\alpha$ is a $t_{p}$-atom \emph{with base $R$} when the base is specified. Since $1 - 2/p < 0$, if \textup{(i)} and \textup{(ii)} hold for $R$ they hold for every $R'' \in \mathcal{D}^{0}$ with $\scl(\alpha) \subseteq R'' \subseteq R$. In particular, the bases of a non-zero $t_{p}$-atom form a finite chain in $\mathcal{D}^{0}$, with a smallest element. The zero sequence is a $t_{p}$-atom with every $R \in \mathcal{D}^{0}$ as base. At $p = 1$, the right-hand side of~\eqref{eq:tp-atom-mass} equals $\abs{R}^{-1}$, recovering~\cite[Definition~4.1.5]{elaoumari2025} on the mother layer.

\begin{proposition}
\label{prop:tp-atom-bound} If $a = (\beta, \alpha)$ is a $t_{p}$-atom with a base $R$, then $a \in t_{p}$ and $\norm{a}_{t_{p}} \le 1$.
\end{proposition}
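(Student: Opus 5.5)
The plan is Hölder's Inequality on the base $R$ at exponent $2/p$, combined with the $L^2$ identity for the local square function. The first step is localization. By condition (i), every mother cube $Q \in \supp(\alpha)$ satisfies $Q \subseteq R$, and a father term is present only when $Q_k \subseteq R$. Hence $S_{\mathrm{loc}}(\beta, \alpha)(x) = 0$ for $x \notin R$, since no term of \eqref{eq:Sz-p} involves a cube containing $x$. This gives
\[
\norm{a}_{t_{p}}^p = \int_R S_{\mathrm{loc}}(\beta, \alpha)(x)^p \, dx .
\]

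The second step computes the $L^2$ mass. By Monotone Convergence (Tonelli), exactly as in the proof of Lemma~\ref{lem:density-tp},
\[
\int_{\mathbb{R}^n} S_{\mathrm{loc}}(\beta,\alpha)^2 = \sum_k \abs{\beta_k}^2 \abs{Q_k} + \sum_{Q \in \mathcal{D}^{0}} \abs{\alpha(Q)}^2 \abs{Q}^{-1}\abs{Q} = \sum_{Q_k \subseteq R} \abs{\beta_k}^2 + \sum_{Q \subseteq R} \abs{\alpha(Q)}^2 .
\]
By \eqref{eq:tp-atom-mass} this is at most $\abs{R}^{1-2/p}$. The father terms pose no difficulty because $\abs{Q_k} = 1$, which is precisely the self-normalization noted after \eqref{eq:Sz-p}.

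The third step applies Hölder's Inequality on $R$ with exponents $2/p$ and $(1-p/2)^{-1}$:
\[
\int_R S_{\mathrm{loc}}^p \le \Bigl(\int_R S_{\mathrm{loc}}^2\Bigr)^{p/2} \abs{R}^{1-p/2} \le \abs{R}^{(1-2/p)p/2}\abs{R}^{1-p/2} = \abs{R}^{p/2 - 1 + 1 - p/2} = 1 .
\]
This yields $S_{\mathrm{loc}}(a) \in L^p$, so $a \in t_{p}$ with $\norm{a}_{t_{p}} \le 1$. It is the upper bound of \eqref{eq:tp-norm} with the single cube $R$ in place of $\sum_{Q\in F}\abs{Q}$, which is legitimate because the socle lies in $R$ even when $\supp(\alpha)$ is infinite.

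The only point needing care is the first step. We must check that the father layer does not escape $R$, which holds because $\beta_k \neq 0$ forces $Q_k \subseteq R$. We must also check that infinitely supported $\alpha$ cause no trouble. They do not, since Tonelli handles the nonnegative series and the socle containment is pointwise. The case $p = 1$ is the same computation with Cauchy--Schwarz in place of Hölder.
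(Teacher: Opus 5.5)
Your proposal is correct and follows essentially the paper's own argument. You show that $S_{\mathrm{loc}}(a)$ is supported in $R$, identify $\int_R S_{\mathrm{loc}}(a)^2$ with the joint mass in \eqref{eq:tp-atom-mass}, and apply H\"older's Inequality at exponent $2/p$ to get $\norm{a}_{t_{p}}^p \le \abs{R}^{1-p/2}\,\abs{R}^{p/2-1} = 1$.
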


\begin{proof}
By definition, $S_{\mathrm{loc}}(a)$ is supported in $R$, and $\int_R S_{\mathrm{loc}}(a)^2 = \sum_{Q_k \subseteq R} \abs{\beta_k}^2 + \sum_{Q \subseteq R} \abs{\alpha(Q)}^2 =: \norm{a}_{\ell^2}^2$. By Hölder’s Inequality, we have:
\[
\norm{a}_{t_{p}}^p = \int_{R} S_{\mathrm{loc}}(a)^p \le \abs{R}^{1 - p/2} \, \biggl(\int_{R} S_{\mathrm{loc}}(a)^2 \biggr)^{p/2} = \abs{R}^{1 - p/2} \, \norm{a}_{\ell^2}^p \le \abs{R}^{1 - p/2} \, \biggl( \abs{R}^{1 - 2/p}\biggr)^{p/2} = 1. \qedhere
\]
\end{proof}
Under the compression of Proposition~\ref{prop:Tp-tp-embedding}, the two atomic notions in $T_{p}$ and $t_{p}$ are compatible in the following precise sense: extension by zero maps the mother-layer $t_{p}$-atoms exactly onto the $T_{p}$-atoms whose base lies in $\mathcal{D}^{0}$, and conversely such $T_{p}$-atoms restrict to $t_{p}$-atoms. A $T_{p}$-atom with large base ($\ell(R) > 1$) is carried by $\Theta$ to an element of $t_{p}$ of quasi-norm exactly its own, hence at most $1$, with genuine father mass, in general not a $t_{p}$-atom, its socle being too spread out to fit in a small cube. In the atomic decomposition of Theorem~\ref{thm:tp-atomic-restate}, such contributions are instead resolved into families of small based atoms.

\subsection{The atomic decomposition}

The local dyadic maximal function~\cite[Chapter~5]{meyer1992}, \cite[Definitions~4.1.6--4.1.7]{elaoumari2025} is, for $f \in L^1_{\mathrm{loc}}(\mathbb{R}^n)$:
\[
M_0^d f(x) = \sup_{x \in Q \in \mathcal{D}^{0}} \frac{1}{\abs{Q}}\int_Q \abs{f}.
\]
For $\vartheta \in (0, 1)$ and a measurable set $E$ with $\abs{E} < \infty$, the \emph{local magnification} of $E$ is:
\[
E^\star_0 = \{M^d_0 \mathbf{1}_E > \vartheta\}.
\]
Its properties listed in~\cite[Proposition~4.1.4]{elaoumari2025} (maximality, $\abs{E^\star_0} < \vartheta^{-1}\abs{E}$, $E \subseteq E^\star_0$ up to null sets, monotonicity) are $p$-independent and apply as they stand.

\begin{theorem}
\label{thm:tp-atomic-restate} \textit{(for $p = 1$, on the mother layer, see~\cite[Theorem~4.1.2]{elaoumari2025}.)} Let $0 < p \le 1$ and let $(\beta, \alpha)$ be a pair of scalar sequences, $\beta$ on the unit lattice and $\alpha$ on $\mathcal{D}^{0}$. The following are equivalent:
\begin{enumerate}[(i)]
\item $(\beta, \alpha) \in t_{p}$.
\item There exists $\gamma \in (0,1]$ such that for every $Q \in \mathcal{D}^{0}$ there exists a measurable set $R(Q) \subseteq Q$ with $\abs{R(Q)} \ge \gamma \abs{Q}$, and for every $k \in \mathbb{Z}^n$ a measurable set $R_k \subseteq Q_k$ with $\abs{R_k} \ge \gamma$, such that:
\[
S_{\mathrm{loc}}^R(\beta, \alpha) := \Bigl( \sum_{k \in \mathbb{Z}^n} \abs{\beta_k}^2\, \mathbf{1}_{R_k} \;+\; \sum_{Q \in \mathcal{D}^{0}} \abs{\alpha(Q)}^2\, \abs{Q}^{-1}\, \mathbf{1}_{R(Q)} \Bigr)^{1/2} \in L^p(\mathbb{R}^n).
\]
\item There exist a sequence of $t_{p}$-atoms $\{ a_j \}$ and scalars $\{ \lambda_j \}$ with $\sum_j \abs{\lambda_j}^p < \infty$ and $(\beta, \alpha) = \sum_j \lambda_j a_j$ in $t_{p}$.
\end{enumerate}
Also, the decomposition in \textup{(iii)} may be taken in \emph{restriction form}: there is a partition $\{\Delta_j\}_j$ of $\supp(\beta) \sqcup \supp(\alpha)$ into pairwise disjoint subsets, each contained in the tent of a small cube $R_j \in \mathcal{D}^{0}$, with any father layer position $k$ satisfying $Q_k = R_j$ in its block, such that $\lambda_j a_j = (\beta, \alpha)\, \mathbf{1}_{\Delta_j}$ for every $j$. If $\supp(\beta) \sqcup \supp(\alpha)$ is finite, the decomposition is finite. Finally, $\norm{(\beta, \alpha)}_{t_{p}}^p \simeq \inf \sum_j \abs{\lambda_j}^p$, the infimum over all decompositions as in \textup{(iii)}, with implicit constants depending only on $(n, p, \gamma)$.
\end{theorem}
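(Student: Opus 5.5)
I would prove the chain (i)$\Rightarrow$(iii)$\Rightarrow$(i) together with (i)$\Leftrightarrow$(ii), following the stopping-time construction of Meyer~\cite[Chapter~5]{meyer1992} in the local form of~\cite[Theorem~4.1.2]{elaoumari2025}. The father layer is treated separately. Every singleton $\{k\}$ with $\beta_k\neq0$ is its own block $\Delta$ with base $Q_k$, and $(\beta,0)=\sum_k \beta_k e_k$ carries $\sum_k\abs{\beta_k}^p=\norm{(\beta,0)}_{t_p}^p\le\norm{(\beta,\alpha)}_{t_p}^p$ by the exact identity and solidity. So everything reduces to the mother layer, with one observation: the local magnifications must be cut off at unit scale. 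This is exactly where the base cubes are forced into $\mathcal{D}^0$.

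\textbf{(i)$\Rightarrow$(iii), restriction form.} For $j\in\mathbb{Z}$ set $E_j=\{S_{\mathrm{loc}}(0,\alpha)>2^j\}$ and $\Omega_j=(E_j)^\star_0$. I assign each $Q\in\supp(\alpha)$ to the largest $j$ with $\abs{Q\cap\Omega_j}>\vartheta... $; concretely, $Q$ goes to level $j$ if $\abs{Q\cap E_j}>\vartheta\abs{Q}$ but $\abs{Q\cap E_{j+1}}\le\vartheta\abs{Q}$. Then $Q\subseteq\Omega_j$, and $Q$ lies inside a maximal cube $R$ of $\mathcal D^0$ contained in $\Omega_j$. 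Maximality is exactly where locality enters: $\Omega_j$ is a union of maximal small dyadic cubes, which exist because $M_0^d$ only looks at $\ell\le1$.

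The block $\Delta_{j,R}$ consists of the cubes of level $j$ inside $R$. I set $\lambda_{j,R}:=\abs{R}^{1/p-1/2}\norm{\alpha\mathbf 1_{\Delta_{j,R}}}_{\ell^2}$, so each $a_{j,R}$ is an atom with base $R$, and the father singletons are added as blocks. The key estimate is the standard one. For $Q$ of level $j$, $\abs{Q\setminus E_{j+1}}\ge(1-\vartheta)\abs{Q}$, hence
\[\sum_{Q\in\Delta_{j,R}}\abs{\alpha(Q)}^2\le(1-\vartheta)^{-1}\int_{R\setminus E_{j+1}}S_{\mathrm{loc}}(\alpha)^2\lesssim 2^{2j}\abs{R}.\]
Consequently $\abs{\lambda_{j,R}}^p\lesssim 2^{jp}\abs{R}$, and summing over $R$ gives $\sum\abs{\lambda}^p\lesssim\sum_j2^{jp}\abs{\Omega_j}\lesssim\vartheta^{-1}\sum_j2^{jp}\abs{E_j}\simeq\norm{S_{\mathrm{loc}}(\alpha)}_p^p$.

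For convergence in $t_p$, the blocks are disjoint, so every finite partial sum is a restriction $(\beta,\alpha)\mathbf 1_F$. Dominated convergence as in Lemma~\ref{lem:density-tp} then gives convergence of the net, and hence summability by Lemma~\ref{lem:net-summability}. For finite support, only finitely many $j$ and $R$ occur. I must also check that cubes with $S_{\mathrm{loc}}\equiv0$ on them cannot be in the support; since $\alpha(Q)\ne0$ forces $S_{\mathrm{loc}}>0$ on $Q$, every cube gets a level.

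\textbf{(iii)$\Rightarrow$(i).} This follows from Proposition~\ref{prop:tp-atom-bound} and the $p$-triangle inequality, giving $\norm{\cdot}_{t_p}^p\le\sum\abs{\lambda_j}^p$. The limit in $t_p$ is complete by Theorem~\ref{thm:tp-quasi-banach}.

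\textbf{(i)$\Leftrightarrow$(ii).} Since $S^R_{\mathrm{loc}}\le S_{\mathrm{loc}}$ pointwise, (i) gives (ii) trivially with $R(Q)=Q$. For (ii)$\Rightarrow$(i) I expect the main obstacle: this is the step needing a genuinely $p<1$ argument. I would rerun the stopping time above with $E_j:=\{S^R_{\mathrm{loc}}>2^j\}$ and level condition $\abs{R(Q)\cap E_{j+1}}\le\frac{\gamma}{2}\abs{Q}$. This gives $\abs{R(Q)\setminus E_{j+1}}\ge\frac\gamma2\abs Q$, and the mass bound becomes $\sum_{\Delta_{j,R}}\abs{\alpha(Q)}^2\le\frac2\gamma\int_{R\setminus E_{j+1}}(S^R_{\mathrm{loc}})^2\lesssim2^{2j}\abs R$, with the threshold $\vartheta=\gamma/2$. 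The father cells are handled by $\abs{\beta_k}^p\le\gamma^{-1}\int_{R_k}(S^R_{\mathrm{loc}})^p$. This produces an atomic decomposition with $\sum\abs\lambda^p\lesssim_{\gamma}\norm{S^R_{\mathrm{loc}}}_p^p$, so (iii) and hence (i) follow.

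The norm equivalence $\norm{(\beta,\alpha)}^p_{t_p}\simeq\inf\sum\abs{\lambda_j}^p$ follows from the two estimates. The delicate point is keeping the stopping-level assignment consistent, so that each cube receives exactly one level, while all bases stay in $\mathcal D^0$.
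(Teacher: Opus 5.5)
Your proposal is correct and follows essentially the paper's proof: the same Meyer-type stopping time on the levels $E_k$, with local magnifications $\{M_0^d\mathbf{1}_{E_k}>\vartheta\}$ cut off at unit scale so that the bases stay in $\mathcal{D}^0$, the same key estimate $\sum_{\Delta}|\alpha(Q)|^2\lesssim 4^{k}|R|$, layer-cake summation, father singletons, and dominated convergence for the restriction-form tails. The only difference is organizational. The paper runs the construction once, directly from (ii), with the rule $|Q\cap E_k|>\vartheta|Q|$, $\vartheta<\gamma$, and recovers (i)$\Rightarrow$(iii) as the case $R(Q)=Q$, $\gamma=1$, so your separate first run is subsumed by your second; your $R(Q)$-based rule with threshold $\gamma/2$ works equally well, provided both the entry condition and the exit condition of a level are stated through $|R(Q)\cap E_j|$, since mixing $Q\cap E_j$ and $R(Q)\cap E_{j+1}$ can assign a cube several levels.
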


\begin{proof}
The structure follows~\cite[Chapter~5, proof of Theorem~1, pp.~143--149]{meyer1992}, as adapted in~\cite[Theorem~4.1.2]{elaoumari2025}, with the adjustments required by the quasi-Banach range indicated throughout.

\textbf{(i)$\implies$(ii):} Trivial, with $\gamma = 1$, $R(Q) = Q$, and $R_k = Q_k$: then $S_{\mathrm{loc}}^R(\beta, \alpha) = S_{\mathrm{loc}}(\beta, \alpha) \in L^p$.

\textbf{(iii)$\implies$(i):} If $(\beta, \alpha) = \sum_j \lambda_j a_j$ with $\sum_j \abs{\lambda_j}^p < \infty$ and each $a_j$ a $t_{p}$-atom, then by the $p$-triangle inequality and the uniform atom bound of Proposition~\ref{prop:tp-atom-bound}:
\[
\norm{(\beta, \alpha)}_{t_{p}}^p \;\le\; \sum_j \abs{\lambda_j}^p\, \norm{a_j}_{t_{p}}^p \;\le\; \sum_j \abs{\lambda_j}^p \;<\; \infty ,
\]
both layers of the pair entering through the joint mass condition~\eqref{eq:tp-atom-mass} of the atoms.

\textbf{(ii)$\implies$(iii):} The father layer decomposes exactly: the sub-cubes $R_k$ being pairwise disjoint with $\abs{R_k} \ge \gamma$, its block of $S_{\mathrm{loc}}^R(\beta, \alpha)$ satisfies:
\[
\gamma \sum_{k} \abs{\beta_k}^p \;\le\; \int_{\mathbb{R}^n} \Bigl( \sum_k \abs{\beta_k}^2\, \mathbf{1}_{R_k} \Bigr)^{p/2} \;\le\; \norm{S_{\mathrm{loc}}^R(\beta, \alpha)}_p^p ,
\]
so $(\beta, 0) = \sum_k \beta_k\, e_k$ over the single-site atoms $e_k$, with $\ell^p$ coefficient mass at most $\gamma^{-1} \norm{S_{\mathrm{loc}}^R(\beta, \alpha)}_p^p$, the restriction form holding trivially with the singleton blocks based at the unit cubes. It remains to decompose the mother layer $(0, \alpha)$, whose truncated square function we abbreviate $S_{\mathrm{loc}}^R(\alpha)$, and to concatenate the two families, the $\ell^p$ masses adding by $p$-sub-additivity. Fix $\vartheta \in (0, \gamma)$ and set:
\[
E_k := \{ x \in \mathbb{R}^n : S_{\mathrm{loc}}^R(\alpha)(x) > 2^k \}, \qquad k \in \mathbb{Z} .
\]
Each $E_k$ has finite measure, since $S_{\mathrm{loc}}^R(\alpha) \in L^p$ gives $\abs{E_k} \le 2^{-kp} \norm{S_{\mathrm{loc}}^R(\alpha)}_p^p$ by Chebyshev's Inequality, and $E_{k+1} \subseteq E_k$. We record the $L^p$ layer-cake bound: since $x \in E_k$ exactly when $2^k < S_{\mathrm{loc}}^R(\alpha)(x)$, Tonelli's theorem and the geometric sum $\sum_{k :\, 2^k < t} 2^{kp} \le \frac{2^p}{2^p - 1}\, t^p$ give:
\begin{equation}
\label{eq:layer-cake}
\sum_{k \in \mathbb{Z}} 2^{kp}\, \abs{E_k} \;=\; \int_{\mathbb{R}^n} \sum_{k\,:\; 2^k < S_{\mathrm{loc}}^R(\alpha)(x)} 2^{kp}\, dx \;\le\; \frac{2^p}{2^p - 1}\, \norm{S_{\mathrm{loc}}^R(\alpha)}_p^p .
\end{equation}

For each $k$, consider the local magnification of $E_k$:
\[
E^\star_{k, 0} := \{ M^d_0 \mathbf{1}_{E_k} > \vartheta \} = \bigcup \bigl\{ Q \in \mathcal{D}^{0} : \abs{Q \cap E_k} > \vartheta \abs{Q} \bigr\},
\]
and write $\mathcal{C}^{k} := \{ Q \in \mathcal{D}^{0} : \abs{Q \cap E_k} > \vartheta \abs{Q} \}$ for the family of cubes realizing it. By~\cite[Proposition~4.1.4]{elaoumari2025}, whose properties are $p$-independent and apply as stated: every $Q \in \mathcal{C}^k$ lies below a maximal element of $\mathcal{C}^k$ (maximality is automatic within $\mathcal{D}^{0}$, the dyadic ancestors of side length $\le 1$ forming a finite chain), the maximal cubes $\{Q(k, l)\}_l$ are pairwise disjoint with $E^\star_{k,0} = \bigcup_l Q(k, l)$, and:
\begin{equation}
\label{eq:magnified-measure}
\abs{E^\star_{k, 0}} \;\le\; \vartheta^{-1} \abs{E_k} ,
\end{equation}
by disjointness and $\abs{Q(k,l)} < \vartheta^{-1} \abs{Q(k,l) \cap E_k}$. Since $E_{k+1} \subseteq E_k$ implies $\abs{Q \cap E_{k+1}} \le \abs{Q \cap E_k}$, the families are nested: $\mathcal{C}^{k+1} \subseteq \mathcal{C}^{k}$ (monotonicity of the magnification).

Every supporting cube has a well-defined exit level: let $Q \in \supp(\alpha)$. On $R(Q) \subseteq Q$, of measure $\ge \gamma \abs{Q}$, we have the pointwise lower bound $S_{\mathrm{loc}}^R(\alpha) \ge \abs{\alpha(Q)}\, \abs{Q}^{-1/2} > 0$. Hence, for every $k$ with $2^k < \abs{\alpha(Q)}\, \abs{Q}^{-1/2}$, we have:
\[
\abs{Q \cap E_k} \;\ge\; \abs{R(Q)} \;\ge\; \gamma \abs{Q} \;>\; \vartheta \abs{Q},
\]
so $Q \in \mathcal{C}^{k}$ for all such $k$ (this is the step consuming $\vartheta < \gamma$). Conversely, $Q \in \mathcal{C}^{k}$ forces $\abs{E_k} > \vartheta \abs{Q}$, which fails for all large $k$ since $\abs{E_k} \to 0$ as $k \to +\infty$. Hence, the \emph{exit level}:
\[
k_Q := \max \{ k \in \mathbb{Z} : Q \in \mathcal{C}^{k} \}
\]
is well defined, and $Q \in \mathcal{C}^{k_Q} \setminus \mathcal{C}^{k_Q + 1}$.

For each $k$ and each maximal cube $Q(k, l)$, set:
\[
\Delta(k, l) := \bigl( \mathcal{C}^{k} \setminus \mathcal{C}^{k+1} \bigr) \cap \bigl\{ Q \in \mathcal{D}^{0} : Q \subseteq Q(k, l) \bigr\} .
\]
The sets $\{\Delta(k, l)\}_{k, l}$ are pairwise disjoint: for distinct $k$ because the layers $\mathcal{C}^{k} \setminus \mathcal{C}^{k+1}$ are pairwise disjoint (nestedness), and for equal $k$ and distinct $l$ because the maximal cubes are disjoint. They exhaust $\supp(\alpha)$: given $Q \in \supp(\alpha)$, one has $Q \in \mathcal{C}^{k_Q} \setminus \mathcal{C}^{k_Q + 1}$, and $Q$ lies below some maximal cube $Q(k_Q, l)$, so $Q \in \Delta(k_Q, l)$. Discarding empty blocks, $\{\Delta(k, l)\}$ is a partition of $\supp(\alpha)$ into pairwise disjoint subsets of $\mathcal{D}^{0}$, each consisting of cubes contained in the small cube $Q(k, l) \in \mathcal{D}^{0}$. If $\supp(\alpha)$ is finite, only finitely many blocks are non-empty.

The key estimate: for every non-empty $\Delta(k, l)$:
\begin{equation}
\label{eq:key-estimate}
\sum_{Q \in \Delta(k, l)} \abs{\alpha(Q)}^2 \;\le\; \frac{4^{k+1}}{\gamma - \vartheta}\, \abs{Q(k, l)} .
\end{equation}
This estimate involves no $p$ and transfers unchanged from~\cite[Chapter~5, pp.~146--148]{meyer1992}, \cite[proof of Theorem~4.1.2]{elaoumari2025}: since $Q \notin \mathcal{C}^{k+1}$ gives $\abs{R(Q) \setminus E_{k+1}} \ge (\gamma - \vartheta) \abs{Q}$, integrating $S_{\mathrm{loc}}^R(\alpha)^2 \le 4^{k+1}$ over $Q(k,l) \setminus E_{k+1}$ against the terms of $\Delta(k,l)$ gives it directly.

For each non-empty $\Delta(k, l)$, define:
\[
\lambda(k, l) := \abs{Q(k, l)}^{1/p - 1/2} \Bigl( \sum_{Q \in \Delta(k, l)} \abs{\alpha(Q)}^2 \Bigr)^{1/2}, \qquad \alpha_{k, l} := \lambda(k, l)^{-1}\, \alpha\, \mathbf{1}_{\Delta(k, l)} .
\]
Then: $\scl(\alpha_{k, l}) \subseteq Q(k, l) \in \mathcal{D}^{0}$ and:
\[
\sum_{Q \in \mathcal{D}^{0}} \abs{\alpha_{k, l}(Q)}^2 = \lambda(k, l)^{-2} \sum_{Q \in \Delta(k, l)} \abs{\alpha(Q)}^2 = \abs{Q(k, l)}^{1 - 2/p},
\]
so each $\alpha_{k, l}$ is a $t_{p}$-atom with base contained in $Q(k, l)$, satisfying by construction $\lambda(k, l)\, \alpha_{k, l} = \alpha\, \mathbf{1}_{\Delta(k, l)}$: the decomposition is in restriction form. Since $\{\Delta(k, l)\}$ partitions $\supp(\alpha)$, the equality $\alpha = \sum_{k, l} \lambda(k, l)\, \alpha_{k, l}$ holds in terms of coefficients.

By~\eqref{eq:key-estimate}, with $(4^{k+1})^{p/2} = 2^p\, 2^{kp}$, we have:
\[
\sum_{k, l} \abs{\lambda(k, l)}^p = \sum_{k, l} \abs{Q(k, l)}^{1 - p/2} \Bigl( \sum_{Q \in \Delta(k, l)} \abs{\alpha(Q)}^2 \Bigr)^{p/2} \;\le\; \frac{2^p}{(\gamma - \vartheta)^{p/2}} \sum_k 2^{kp} \sum_l \abs{Q(k, l)} ,
\]
and by~\eqref{eq:magnified-measure} followed by~\eqref{eq:layer-cake}:
\begin{equation}
\label{eq:coeff-bound}
\sum_{k, l} \abs{\lambda(k, l)}^p \;\le\; \frac{2^p\, \vartheta^{-1}}{(\gamma - \vartheta)^{p/2}} \sum_k 2^{kp} \abs{E_k} \;\le\; \frac{4^p\, \vartheta^{-1}} {(2^p - 1)\, (\gamma - \vartheta)^{p/2}}\, \norm{S_{\mathrm{loc}}^R(\alpha)}_p^p \;<\; \infty .
\end{equation}
At $p = 1$ the exponents and constants collapse to those of the thesis.

For the unconditional convergence in $t_{p}$ of the concatenated family: write $\{(\Delta_j, \lambda_j, a_j)\}_j$ for an arbitrary ordering of the non-empty blocks, the father singletons $\{k\}$ with atoms $e_k$ and coefficients $\beta_k$ alongside the mother blocks $\Delta(k, l)$ with atoms $\alpha_{k, l}$, and let $G$ run over finite index sets. The combined $\ell^p$ coefficient mass is finite, by the father bound and~\eqref{eq:coeff-bound}, the masses adding by $p$-sub-additivity, so $a := (\beta, \alpha) \in t_{p}$ by the already proved implication \textup{(iii)}$\implies$\textup{(i)}, i.e.\ $S_{\mathrm{loc}}(\beta, \alpha) \in L^p$. The tail satisfies $a - \sum_{j \in G} \lambda_j\, a_j = a\, \mathbf{1}_{\bigcup_{j \notin G} \Delta_j}$, the restriction of the pair to the omitted blocks, layer by layer (restriction form and partition on each layer), whence, pointwise, we have:
\[
S_{\mathrm{loc}}\Bigl( a\, \mathbf{1}_{\bigcup_{j \notin G} \Delta_j} \Bigr) \;\le\; S_{\mathrm{loc}}(\beta, \alpha), \qquad S_{\mathrm{loc}}\Bigl( a\, \mathbf{1}_{\bigcup_{j \notin G} \Delta_j} \Bigr)(x) \;\longrightarrow\; 0 \quad \text{as } G \uparrow,
\]
the convergence at each $x$ because the sum defining $S_{\mathrm{loc}}(\beta, \alpha)(x)^2$ is a convergent series over the unit cell containing $x$ together with the supporting cubes containing $x$, of which the tail retains only those with index outside $G$. Dominated convergence in $L^p$ gives:
\[
\norm*{ a - \sum_{j \in G} \lambda_j\, a_j
     }_{t_{p}} \;\longrightarrow\; 0 ,
\]
along any exhaustion of the index set, and the tail quasi-norms decrease along inclusion, so the sequential limit is the net limit: the net of finite partial sums converges to $a$, the series being summable to $a = (\beta, \alpha)$ in $t_{p}$, unconditionally.

From \textup{(iii)}$\implies$\textup{(i)}, every decomposition satisfies $\norm{(\beta, \alpha)}_{t_{p}}^p \le \sum_j \abs{\lambda_j}^p$, whence $\norm{(\beta, \alpha)}_{t_{p}}^p \le \inf \sum_j \abs{\lambda_j}^p$. Conversely, applying \textup{(i)}$\implies$\textup{(ii)} (with $\gamma = 1$, $R(Q) = Q$, and $R_k = Q_k$, so that $S_{\mathrm{loc}}^R(\beta, \alpha) = S_{\mathrm{loc}}(\beta, \alpha)$) followed by \textup{(ii)}$\implies$\textup{(iii)} with $\vartheta = 1/2$, the produced concatenated decomposition satisfies $\sum_k \abs{\beta_k}^p + \sum_{k, l} \abs{\lambda(k, l)}^p \le C'(n, p)\, \norm{S_{\mathrm{loc}}(\beta, \alpha)}_p^p$, for a constant $C'(n, p)$, by the father bound and~\eqref{eq:coeff-bound}, so $\inf \sum_j \abs{\lambda_j}^p \le C'(n, p)\, \norm{(\beta, \alpha)}_{t_{p}}^p$. \end{proof}

\section{Wavelet characterization of \texorpdfstring{$h^p(\mathbb{R}^n)$}{hp(Rn)}}
\label{sec:hp}

\label{ssec:hp-lemmas}

For $f \in \mathcal{S}'(\mathbb{R}^n)$ with wavelet coefficients $\beta_k$, $\alpha(\lambda)$ as in Subsection~\ref{ssec:wavelet}, the local square functions are:
\begin{equation}
\label{eq:SLoc-def}
\begin{aligned}
S_{\mathrm{Loc}}(f) &:= \biggl(\sum_{k \in \mathbb{Z}^n} \abs{\beta_k}^2\, \abs{\phi_k}^2 + \sum_{\lambda \in \Lambda^0} \abs{\alpha(\lambda)}^2\, \abs{\psi_\lambda}^2\biggr)^{1/2}, \\
S_{\mathrm{Loc}}^R(f) &:= \biggl(\sum_{k \in \mathbb{Z}^n} \abs{\beta_k}^2\, \mathbf{1}_{R_k} + \sum_{\lambda \in \Lambda^0} \abs{\alpha(\lambda)}^2\, \abs{Q(\lambda)}^{-1}\, \mathbf{1}_{R(\lambda)}\biggr)^{1/2},
\end{aligned}
\end{equation}
with $R(\lambda) \subseteq Q(\lambda)$, $R_k \subseteq Q_k$ the sub-cubes of Subsection~\ref{ssec:wavelet}, of measure ratio $\gamma$, and $S_{\mathrm{Loc}}^Q(f)$ the same with $Q_k$, $Q(\lambda)$ in place of $R_k$, $R(\lambda)$.

\begin{lemma}
\label{lem:khinchin-p} Let $0 < p \le 1$. Write $u_k := \phi_k$ for $k \in \mathbb{Z}^n$ and $u_\lambda := \psi_\lambda$ for $\lambda \in \Lambda^0$. There exists $C(p) < \infty$ such that for every finite subset $F \subseteq \mathbb{Z}^n \sqcup \Lambda^0$, every $x \in \mathbb{R}^n$, and every sequence $\{c(\mu)\}_{\mu \in F}$ of scalars:
\[
\biggl( \sum_{\mu \in F} \abs{c(\mu)}^2\, \abs{u_\mu(x)}^2 \biggr)^{1/2} \;\le\; C(p)\, \biggl( \frac{1}{2^{\abs{F}}} \sum_{\varepsilon \in \{\pm 1\}^F} \Bigl| \sum_{\mu \in F} \varepsilon(\mu)\, c(\mu)\, u_\mu(x) \Bigr|^{p} \biggr)^{1/p}.
\]
For $p = 1$ this is the classical Khintchine inequality with $C(1) = \sqrt{2}$.
\end{lemma}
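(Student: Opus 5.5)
At a fixed $x$ the statement is purely scalar. Set $z_\mu := c(\mu)\, u_\mu(x) \in \mathbb{C}$ for $\mu \in F$. The left side is $\bigl(\sum_{\mu \in F} \abs{z_\mu}^2\bigr)^{1/2}$, and the right side is $C(p)$ times the $L^p$ quasi-norm of the Rademacher sum $X := \sum_{\mu} \varepsilon(\mu) z_\mu$ under the uniform probability on $\{\pm 1\}^F$. The claim is therefore the lower Khintchine inequality $\norm{X}_{L^2(\Omega)} \le C(p)\, \norm{X}_{L^p(\Omega)}$, for finitely many complex coefficients, with $C(p)$ independent of $\abs{F}$. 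The wavelet structure plays no role. In particular, uniformity in $F$, in $x$, and in the (possibly zero) values $u_\mu(x)$ is automatic.

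\textbf{Step 1: the $L^4$ bound.} By orthogonality, $\norm{X}_{L^2(\Omega)}^2 = \sum \abs{z_\mu}^2 =: \sigma^2$. First treat real coefficients. Expanding $\mathbb{E}\,X^4$, only the products $\varepsilon_\mu^2\varepsilon_\nu^2$ survive, which gives
\[
\mathbb{E}\, X^4 = \sum_\mu z_\mu^4 + 3\sum_{\mu\ne\nu} z_\mu^2 z_\nu^2 \le 3\sigma^4 .
\]
For complex coefficients, split $X = X_1 + iX_2$ into real and imaginary parts. Then $\abs{X}^4 = (X_1^2 + X_2^2)^2 \le 2(X_1^4 + X_2^4)$, so $\mathbb{E}\abs{X}^4 \le 6\sigma^4$. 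In either case $\norm{X}_4 \le 6^{1/4}\sigma$.

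\textbf{Step 2: interpolation.} Choose $\theta \in (0,1)$ with $\frac12 = \frac{\theta}{p} + \frac{1-\theta}{4}$. This is possible because $p \le 1 < 2 < 4$; explicitly $\theta = \frac{p}{4-p}$. Hölder's inequality (log-convexity of $L^s$ norms) gives $\norm{X}_2 \le \norm{X}_p^{\theta}\, \norm{X}_4^{1-\theta}$. Combining with Step 1,
\[
\sigma \le \norm{X}_p^{\theta}\, \bigl(6^{1/4}\sigma\bigr)^{1-\theta} .
\]
If $\sigma = 0$ there is nothing to prove. Otherwise divide by $\sigma^{1-\theta}$ to obtain $\sigma \le 6^{(1-\theta)/(4\theta)}\, \norm{X}_p$. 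This yields the inequality with $C(p) = 6^{(1-\theta)/(4\theta)} = 6^{(1-p/2)/p}$, since $\frac{1-\theta}{4\theta} = \frac{4-2p}{4p}$.

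\textbf{Step 3: the case $p = 1$.} Here the statement asks for the sharp constant $\sqrt 2$ (Szarek's constant). For real coefficients this is Szarek's theorem, which I would quote. For complex coefficients, the sharp constant $\sqrt 2$ would need a separate citation (e.g.\ Haagerup for the real case; the complex Rademacher case follows by a standard argument or citation). Steps 1 and 2 alone give the nonsharp constant $6^{1/2}$ at $p=1$. That suffices for every later use, since only the finiteness of $C(p)$ matters. The main point requiring care is therefore only this bookkeeping: whether to cite the sharp $\sqrt2$ or simply remark that the interpolation constant suffices. The core argument in Steps 1 and 2 has no genuine obstacle.
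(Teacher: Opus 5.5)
Your proof is correct and uses the same reduction as the paper: at fixed $x$ the claim is the lower Khintchine inequality for the scalars $z_\mu = c(\mu)u_\mu(x)$, with a constant independent of $\abs{F}$ and $x$. You differ only in the scalar ingredient. The paper cites it for $0<p<\infty$ (Grafakos, Appendix~C, with Haagerup for the optimal constants). You prove it from the fourth-moment bound and H\"older interpolation between $L^p$ and $L^4$, which makes the lemma self-contained and gives the explicit constant $C(p)=6^{(1-p/2)/p}$. That is all the lemma's only use, in (A)$\implies$(B), requires.

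The real/imaginary splitting is not needed. Expanding directly for complex coefficients gives $\mathbb{E}\abs{X}^4 = 2\sigma^4 + \abs{\sum_\mu z_\mu^2}^2 - 2\sum_\mu \abs{z_\mu}^4 \le 3\sigma^4$, hence the better constant $3^{(1-p/2)/p}$.

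For the sharp value $C(1)=\sqrt2$, you and the paper both rely on citation. The $c(\mu)$ are complex, while Szarek's and Haagerup's theorems concern real coefficients, so the step you leave vague needs filling. One way is to average Szarek's inequality for the real sums $\sum_\mu \varepsilon(\mu)\,\mathrm{Re}(e^{-it}z_\mu)$ over $t\in[0,2\pi)$; computed exactly, this returns the constant $\sqrt2$. Alternatively, cite the Khintchine--Kahane inequality of Lata{\l}a and Oleszkiewicz, which holds with constant $\sqrt2$ in any normed space.
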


\begin{proof}
Fix $x \in \mathbb{R}^n$ and apply the scalar Khintchine inequality (lower half) to the finite sequence $a_\mu := c(\mu)\, u_\mu(x)$, $\mu \in F$: for every $0 < p < \infty$ there is $A_p > 0$ with:
\[
A_p \Bigl( \sum \abs{a_\lambda}^2 \Bigr)^{1/2} \;\le\;
\Bigl( \mathbb{E}_\varepsilon \abs[\Big]{\sum \varepsilon(\lambda)\, a_\lambda}^p \Bigr)^{1/p},
\]
where $\mathbb{E}_\varepsilon$ denotes the average over $\varepsilon \in \{\pm 1\}^F$. Take $C(p) = A_p^{-1}$. For the inequality in the range $0 < p < \infty$, including $0 < p < 1$, see~\cite[Appendix~C]{Grafakos2014}. The optimal constants are determined in~\cite{haagerup1981}.
\end{proof}

\begin{lemma}
\label{lem:sf-comp} Let $f \in \mathcal{S}'(\mathbb{R}^n)$ have wavelet coefficients $\beta_k = \pair{f}{\phi_k}$ and $\alpha(\lambda) = \pair{f}{\psi_\lambda}$, in the sense of the pairing conventions of Subsection~\ref{ssec:wavelet}, and write $S_{\mathrm{Loc}}^{m Q}(f)$ for $S_{\mathrm{Loc}}^Q(f)$ of~\eqref{eq:SLoc-def} with $m Q_k$, $m Q(\lambda)$ in place of $Q_k$, $Q(\lambda)$. Then, pointwise on $\mathbb{R}^n$:
\[
  c\, S_{\mathrm{Loc}}^{R}(f)(x)
  \;\le\; S_{\mathrm{Loc}}(f)(x)
  \;\le\; C_\psi\, S_{\mathrm{Loc}}^{m Q}(f)(x),
\]
where $c > 0$ is the common lower constant of~\eqref{eq:lb-psi} and of the father clause of Subsection~\ref{ssec:wavelet}, and $C_\psi := \max\bigl( \sup_e \norm{\psi^e}_\infty, \norm{\phi}_{L^\infty(\mathbb{R})}^n \bigr)$, both depending only on the basis. The bounds hold term-wise on each layer, as the proof shows.
\end{lemma}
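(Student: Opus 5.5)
The plan is to prove both inequalities term by term at a fixed point $x$, on each layer separately, and then take square roots. Since each of $S_{\mathrm{Loc}}(f)(x)^2$, $S_{\mathrm{Loc}}^R(f)(x)^2$ and $S_{\mathrm{Loc}}^{mQ}(f)(x)^2$ is a sum of nonnegative terms indexed by the same set $\mathbb{Z}^n \sqcup \Lambda^0$, a pointwise comparison of corresponding summands is enough. Equivalently, we compare the partial sums over finite $F$ and then pass to the supremum over $F$, which is the exhaustion-independent limit recorded after Lemma~\ref{lem:net-summability}. The coefficients $\beta_k$, $\alpha(\lambda)$ enter only through their moduli, which appear identically on both sides. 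So no property of $f$ beyond the existence of the coefficients under (P1)--(P3) is used, and the convention only needs to be fixed once.

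For the lower bound, I would use the father clause $\abs{\phi_k} \ge c\,\mathbf{1}_{R_k}$, which gives $\abs{\beta_k}^2\abs{\phi_k(x)}^2 \ge c^2 \abs{\beta_k}^2 \mathbf{1}_{R_k}(x)$. On the mother layer, \eqref{eq:lb-psi} gives $\abs{\psi_\lambda(x)}^2 \ge c^2 \abs{Q(\lambda)}^{-1}\mathbf{1}_{R(\lambda)}(x)$, hence $\abs{\alpha(\lambda)}^2\abs{\psi_\lambda(x)}^2 \ge c^2 \abs{\alpha(\lambda)}^2\abs{Q(\lambda)}^{-1}\mathbf{1}_{R(\lambda)}(x)$. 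Summing over both layers and taking square roots yields $c\,S^R_{\mathrm{Loc}}(f)(x) \le S_{\mathrm{Loc}}(f)(x)$. Here $c$ is taken as the minimum of the father and mother lower constants if they differ.

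For the upper bound, I would use the support inclusions $\supp\psi_\lambda \subseteq mQ(\lambda)$ and $\supp\phi_k \subseteq mQ_k$ together with sup bounds. For the mother layer, $\abs{\psi_\lambda(x)} = 2^{nj/2}\abs{\psi^e(2^jx-k)} \le \sup_e\norm{\psi^e}_\infty\,\abs{Q(\lambda)}^{-1/2}\mathbf{1}_{mQ(\lambda)}(x)$, since $\abs{Q(\lambda)} = 2^{-jn}$. For the father layer, $\abs{\phi_k(x)} \le \norm{\phi}_{L^\infty(\mathbb{R}^n)}\mathbf{1}_{mQ_k}(x)$, and by the tensor structure $\norm{\phi}_{L^\infty(\mathbb{R}^n)} = \norm{\phi}_{L^\infty(\mathbb{R})}^n$. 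Squaring, bounding both constants by $C_\psi$, and summing gives $S_{\mathrm{Loc}}(f)(x)^2 \le C_\psi^2\, S^{mQ}_{\mathrm{Loc}}(f)(x)^2$. Note that $S^{mQ}_{\mathrm{Loc}}$ keeps the normalization $\abs{Q(\lambda)}^{-1}$ and not $\abs{mQ(\lambda)}^{-1}$, which is exactly the weight needed here.

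There is no genuine obstacle in this argument. The only points needing care are the following:
\begin{itemize}
\item The convention that each sum is the supremum of its finite partial sums, so that infinite values are allowed and the inequalities still pass to the limit.
\item Keeping track of the normalization: $\abs{\psi_\lambda}^2$ carries $\abs{Q(\lambda)}^{-1}$ implicitly through the $L^2$ scaling $2^{nj/2}$.
\item Identifying $\norm{\phi}_\infty$ in $n$ dimensions with the $n$-th power of the one-dimensional bound.
\end{itemize}
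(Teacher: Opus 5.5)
Your proposal is correct and follows the paper's own proof essentially verbatim. Both rest on the term-wise two-sided bound $c\,\abs{Q(\lambda)}^{-1/2}\mathbf{1}_{R(\lambda)} \le \abs{\psi_\lambda} \le C_\psi\,\abs{Q(\lambda)}^{-1/2}\mathbf{1}_{mQ(\lambda)}$ and its father companion, followed by multiplying by the coefficient moduli, squaring, summing over both layers (possibly infinite sums) and taking square roots; your remark that $S^{mQ}_{\mathrm{Loc}}$ keeps the weight $\abs{Q(\lambda)}^{-1}$ rather than $\abs{mQ(\lambda)}^{-1}$ is exactly how the paper's proof reads that definition.
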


\begin{proof}
Both inequalities follow term-wise from:
\[
c\, \abs{Q(\lambda)}^{-1/2}\, \mathbf{1}_{R(\lambda)}(x) \;\le\; \abs{\psi_\lambda(x)} \;\le\; C_\psi\, \abs{Q(\lambda)}^{-1/2}\, \mathbf{1}_{m Q(\lambda)}(x),
\]
and its father companion $c\, \mathbf{1}_{R_k}(x) \le \abs{\phi_k(x)} \le \norm{\phi}_{L^\infty(\mathbb{R})}^n\, \mathbf{1}_{m Q_k}(x)$, the left inequalities being~\eqref{eq:lb-psi} and the father clause of Subsection~\ref{ssec:wavelet}, and the right ones following from $\supp(\phi_k) \subseteq m Q_k$ and the identity $\norm{\psi_\lambda}_\infty = 2^{nj/2} \norm{\psi^e}_\infty = \abs{Q(\lambda)}^{-1/2} \norm{\psi^e}_\infty$ together with $\supp(\psi_\lambda) \subseteq m Q(\lambda)$: multiply by the coefficient moduli, square, sum over both layers, and take square roots (all terms are non-negative, both sides possibly infinite).
\end{proof}

\begin{lemma}
\label{lem:tp-to-sf} Let $a = (\beta, \alpha)$ be a pair of scalar sequences, $\beta$ on the unit lattice and $\alpha$ on $\Lambda^0$. For $e \in E$, define $\alpha^e = \{\alpha^e(Q)\}_{Q \in \mathcal{D}^{0}}$ by $\alpha^e(Q) := \alpha(\lambda)$ for the unique $\lambda = k/2^j + e/2^{j+1} \in \Lambda^0$ of direction $e$ with $Q(\lambda) = Q$, and write $S_{\mathrm{loc}}(\beta, \alpha)$ for the local square function~\eqref{eq:Sz-p} of the pair, the mother sum running over $\Lambda^0$ through the cubes $Q(\lambda)$. Then:
\[
\max\Bigl( \norm{(\beta, 0)}_{t_{p}}^p ,\ \max_{e \in E}\, \norm{(0, \alpha^e)}_{t_{p}}^p \Bigr) \;\le\; \norm{S_{\mathrm{loc}}(\beta, \alpha)}_p^p \;\le\; \norm{(\beta, 0)}_{t_{p}}^p + \sum_{e \in E} \norm{(0, \alpha^e)}_{t_{p}}^p .
\]
In particular, $S_{\mathrm{loc}}(\beta, \alpha)$ lies in $L^p(\mathbb{R}^n)$ if and only if the father pair $(\beta, 0)$ and every pair $(0, \alpha^e)$, $e \in E$, lie in $t_{p}$. This applies, in particular, to the wavelet coefficient sequences $\beta_k = \pair{f}{\phi_k}$ and $\alpha(\lambda) = \pair{f}{\psi_\lambda}$ of any $f \in \mathcal{S}'(\mathbb{R}^n)$ for which the coefficients are defined, in which case $S_{\mathrm{loc}}(\beta, \alpha) = S_{\mathrm{Loc}}^Q(f)$, the square function of condition (D).
\end{lemma}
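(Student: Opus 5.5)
The whole argument rests on a pointwise decomposition of the square function into its $1 + \abs{E}$ pieces, followed by integration. For each $Q \in \mathcal{D}^{0}$, the map $\lambda \mapsto (Q(\lambda), e)$ is a bijection from $\Lambda^0$ onto $\mathcal{D}^{0} \times E$. This holds because each $\lambda = k/2^j + e/2^{j+1}$ with $j \ge 0$ determines $(j,k,e)$ uniquely: $\lambda$ lies in the interior of $Q(\lambda)$, at a position fixed by $e$. Rearranging the nonnegative series defining $S_{\mathrm{loc}}(\beta,\alpha)(x)^2$ along this bijection gives, for every $x$, the exact identity
\[
S_{\mathrm{loc}}(\beta,\alpha)(x)^2 = S_{\mathrm{loc}}(\beta,0)(x)^2 + \sum_{e \in E} S_{\mathrm{loc}}(0,\alpha^e)(x)^2 .
\]
Every term is nonnegative, so this is valid in $[0,\infty]$ with no convergence issue.

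For the lower bound, each single summand on the right is dominated pointwise by the left side. Raising to the power $p/2$ and integrating gives $\norm{(\beta,0)}_{t_p}^p \le \norm{S_{\mathrm{loc}}(\beta,\alpha)}_p^p$, and likewise for each $(0,\alpha^e)$. Taking the maximum over the finitely many pieces yields the left inequality.

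For the upper bound, apply the elementary inequality $(\sum_i s_i)^{p/2} \le \sum_i s_i^{p/2}$, valid since $p/2 \le 1$, to the $1+\abs{E}$ nonnegative summands $s_i$. Integrating gives the right inequality, with equality of the infinite cases handled by monotone convergence and Tonelli.

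The ``in particular'' clause follows at once. The left inequality shows that $S_{\mathrm{loc}}(\beta,\alpha) \in L^p$ forces every piece into $t_p$. Conversely, if every piece lies in $t_p$, the right-hand side is a finite sum of $2^n$ finite quantities, so $S_{\mathrm{loc}}(\beta,\alpha) \in L^p$.

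For the application to $f$, I would compare~\eqref{eq:SLoc-def} with~\eqref{eq:Sz-p}. The term $\abs{\beta_k}^2 \mathbf{1}_{Q_k}$ matches the father term, since $x \in Q_k$ exactly when $\mathbf{1}_{Q_k}(x) = 1$. The term $\abs{\alpha(\lambda)}^2 \abs{Q(\lambda)}^{-1} \mathbf{1}_{Q(\lambda)}$ matches the mother term summed over $\lambda$. Hence $S_{\mathrm{Loc}}^Q(f) = S_{\mathrm{loc}}(\beta,\alpha)$ pointwise, with the coefficients defined under whichever pairing convention \textup{(P1)}--\textup{(P3)} applies.

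The only step needing real care is the bijection. It must be checked that distinct $(j,k,e)$ never produce the same cube-direction pair, including across scales, so that the rearrangement neither double counts nor omits terms. I would verify this by recovering $j$ and $k$ from $Q(\lambda)$ and then $e$ from the offset $\lambda - k/2^j = e/2^{j+1}$.
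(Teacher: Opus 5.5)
Your proof is correct and is the paper's argument. You use the pointwise identity $S_{\mathrm{loc}}(\beta,\alpha)^2 = S_{\mathrm{loc}}(\beta)^2 + \sum_{e \in E} S_{\mathrm{loc}}(\alpha^e)^2$, obtained by splitting $\Lambda^0$ by direction, then monotonicity for the lower bound and sub-additivity of $t \mapsto t^{p/2}$ for the upper bound.

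There is one cosmetic slip in your bijection check. When some $e_i = 0$, the point $\lambda$ lies on a face of $Q(\lambda)$, not in its interior. Recovering $(j,k)$ from $Q(\lambda)$ is also circular, since $Q(\lambda)$ is defined from them. The claim itself holds: $j+1$ is the largest exact dyadic-denominator exponent among the coordinates of $\lambda$ (because $e \neq 0$), and this then determines $e$ and $k$.
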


\begin{proof}
For fixed $e$, the map $\lambda \mapsto Q(\lambda)$ is a bijection between the indices of direction $e$ in $\Lambda^0$ and $\mathcal{D}^{0}$, so splitting the sum over $\lambda$ according to direction factorizes $S_{\mathrm{loc}}(\beta, \alpha)$ pointwise:
\begin{equation}
\label{eq:D-factorization}
S_{\mathrm{loc}}(\beta, \alpha) = \biggl( \sum_{k \in \mathbb{Z}^n} \abs{\beta_k}^2\, \mathbf{1}_{Q_k} \;+ \sum_{\lambda \in \Lambda^0} \abs{\alpha(\lambda)}^2\, \abs{Q(\lambda)}^{-1}\, \mathbf{1}_{Q(\lambda)} \biggr)^{1/2} = \Bigl( S_{\mathrm{loc}}(\beta)^2 + \sum_{e \in E} S_{\mathrm{loc}}(\alpha^e)^2 \Bigr)^{1/2},
\end{equation}
the inner bracket for each $e$ being $S_{\mathrm{loc}}(\alpha^e)^2$ by~\eqref{eq:Sz-p}, and the father term being $S_{\mathrm{loc}}(\beta)^2$ exactly, the unit cubes tiling $\mathbb{R}^n$. For the norm bounds: the lower one follows from~\eqref{eq:D-factorization} by sub-additivity of $t \mapsto t^{p/2}$ and monotonicity, discarding all but one layer or one direction and integrating, while the upper one follows from $\bigl( \sum g \bigr)^{p/2} \le \sum g^{p/2}$ with the $g$'s running over $S_{\mathrm{loc}}(\beta)^2$ and $S_{\mathrm{loc}}(\alpha^e)^2$, $e \in E$ (sub-additivity of $t \mapsto t^{p/2}$, as $p/2 \le 1$), integrated with respect to $x$.
\end{proof}

\begin{lemma}
\label{lem:dist-conv} Let $f \in \mathcal{S}'(\mathbb{R}^n)$ be such that the inhomogeneous wavelet decomposition~\eqref{eq:inh-wavelet-decomp} converges to $f$ in $\mathcal{S}'(\mathbb{R}^n)$ (e.g.\ $f$ of order less than $r$, convention \textup{(P1)}), with coefficients $\beta(k)$, $\alpha(\lambda)$ satisfying $\sum_k \abs{\beta(k)}^p < \infty$. Suppose each $\{\alpha^e(Q)\}_{Q \in \mathcal{D}^{0}}$ admits a $t_{p}$-atomic decomposition $\alpha^e = \sum_j \lambda^e_j\, \alpha^e_j$ with $\sum_j \abs{\lambda^e_j}^p < \infty$ and bases $Q^e_j \in \mathcal{D}^{0}$. Then for every $\varphi \in \mathcal S(\mathbb{R}^n)$:
\[
\pair{f}{\varphi} = \sum_{k \in \mathbb{Z}^n} \beta_k\, \pair{\phi_k}{\varphi}
+ \sum_{e \in E} \sum_{j = 1}^{\infty} \lambda^e_j \Bigl( \sum_{Q \in \mathcal{D}^{0}} \alpha^e_j(Q)\, \pair{\psi^e_Q}{\varphi} \Bigr),
\]
with absolute convergence of all sums.
\end{lemma}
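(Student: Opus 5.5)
Starting from the wavelet decomposition~\eqref{eq:inh-wavelet-decomp}, I would write
\[
\pair{f}{\varphi} = \sum_k \beta_k \pair{\phi_k}{\varphi} + \sum_{e \in E}\sum_{Q \in \mathcal{D}^{0}} \alpha^e(Q) \pair{\psi^e_Q}{\varphi},
\]
where $\psi^e_Q := \psi_\lambda$ for the $\lambda$ of direction $e$ with $Q(\lambda) = Q$. The series of pairings against $\varphi$ converges absolutely, as recalled after~\eqref{eq:inh-wavelet-decomp}. The father sum needs nothing further: $\abs{\pair{\phi_k}{\varphi}} \le \norm{\phi}_1 \sup_{mQ_k}\abs{\varphi} \lesssim \norm{\phi}_1$, and $\sum_k \abs{\beta_k} \le (\sum_k \abs{\beta_k}^p)^{1/p}$ since $\ell^p \hookrightarrow \ell^1$. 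The remaining work is to substitute $\alpha^e(Q) = \sum_j \lambda^e_j \alpha^e_j(Q)$ into each mother term and exchange the sums in $Q$ and $j$. By Fubini for absolutely summable double series, it suffices to prove
\[
\sum_j \abs{\lambda^e_j} \sum_{Q} \abs{\alpha^e_j(Q)}\,\abs{\pair{\psi^e_Q}{\varphi}} < \infty .
\]

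\textbf{Coefficient identity.} Convergence of $\sum_j \lambda^e_j \alpha^e_j$ in $t_p$ implies coordinatewise convergence. Indeed, by~\eqref{eq:tp-norm} with $F = \{Q\}$, one has $\abs{\gamma(Q)} \le \abs{Q}^{1/2 - 1/p}\norm{(0,\gamma)}_{t_p}$. Hence $\alpha^e(Q) = \sum_j \lambda^e_j \alpha^e_j(Q)$ for each $Q$. I will also need absolute convergence of this coordinate series, which follows from $\abs{\alpha^e_j(Q)} \le \abs{Q^e_j}^{1/2-1/p} \le 1$ (all bases lie in $\mathcal{D}^{0}$) and $\sum_j \abs{\lambda^e_j} < \infty$. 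This bound is also what makes the double sum above reduce to a uniform estimate over atoms.

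\textbf{Uniform atom bound (main step).} I claim $\sup_{a} \sum_{Q} \abs{a(Q)}\,\abs{\pair{\psi^e_Q}{\varphi}} \le C_\varphi$, uniformly over $t_p$-atoms $a$ with base $R \in \mathcal{D}^{0}$.
\begin{itemize}
\item For $\ell(Q) = 2^{-j}$, the $N \ge 1$ vanishing moments~\eqref{eq:moments}, a first order Taylor expansion of $\varphi$ at the center of $Q$, and the size $\norm{\psi^e_Q}_1 \lesssim \abs{Q}^{1/2}$ give
\[
\abs{\pair{\psi^e_Q}{\varphi}} \lesssim \abs{Q}^{1/2}\, 2^{-j} \sup_{mQ}\abs{\nabla\varphi} \lesssim_M \abs{Q}^{1/2}\, 2^{-j}\,(1+\abs{x_Q})^{-M}.
\]
\item Cauchy--Schwarz over the cubes $Q \subseteq R$ of a fixed generation $j$, of which there are at most $2^{jn}\abs{R}$, together with the mass condition~\eqref{eq:tp-atom-mass}, bounds the contribution of generation $j$ by
\[
\abs{R}^{1/2 - 1/p}\,(2^{jn}\abs{R})^{1/2}\,2^{-jn/2}\,2^{-j}\,(1+\operatorname{dist}(0,R))^{-M} = \abs{R}^{1 - 1/p}\,2^{-j}\,(1+\operatorname{dist}(0,R))^{-M}.
\]
\end{itemize}
This is where I expect the difficulty: for $p < 1$ the factor $\abs{R}^{1-1/p}$ blows up as $R$ shrinks, so summing this bound over $j \ge j_R$ (where $\ell(R) = 2^{-j_R}$) gives $\abs{R}^{1-1/p}\ell(R)$. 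That is bounded only when $1 + n_p$ is at most $1$, which fails for small $p$.

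\textbf{Resolving the obstruction.} The decay in $2^{-j}$ must be improved, so I will exploit all $N \ge N_p + 1 > n_p$ vanishing moments with a Taylor expansion of order $N$. This gives
\[
\abs{\pair{\psi^e_Q}{\varphi}} \lesssim \abs{Q}^{1/2}\,2^{-j(N+1)}\,(1+\abs{x_Q})^{-M}.
\]
The generation-$j$ contribution becomes $\abs{R}^{1-1/p}\,2^{-j(N+1)}$. Summing over $j \ge j_R$ yields $\ell(R)^{\,n - n/p + N + 1}$, whose exponent is $N + 1 - n_p > 0$, so it is at most $1$ because $\ell(R) \le 1$.

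Multiplying the uniform atom bound by $\sum_j \abs{\lambda^e_j} < \infty$ (again $\ell^p \hookrightarrow \ell^1$) gives the required absolute convergence. Summing over the finitely many $e \in E$ and adding the father sum gives the identity with all sums absolutely convergent.
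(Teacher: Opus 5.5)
Your argument is correct, but at the key estimate it takes a different route from the paper. Both proofs reduce to a uniform bound $\sum_{Q} \abs{a(Q)}\,\abs{\pair{\psi^e_Q}{\varphi}} \le C_\varphi$ over $t_{p}$-atoms with base $R \in \mathcal{D}^{0}$, and both close with $\ell^p \hookrightarrow \ell^1$.

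The paper obtains this bound from one Cauchy--Schwarz over the whole tent and a Carleson-type estimate $\sum_{Q \subseteq R} \abs{\pair{\psi^e_Q}{\varphi}}^2 \lesssim \abs{R}^{2/p-1}$. That estimate comes from $\varphi \in \dot\Lambda_{n_p}$ through the \emph{critical} coefficient decay $\abs{\pair{\psi^e_Q}{\varphi}} \lesssim \abs{Q}^{1/p-1/2}$, so the powers of $\abs{R}$ cancel exactly. Because the resulting geometric series degenerates at $p=1$, the paper must there invoke Meyer's Carleson characterization of $\mathrm{BMO}$.

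You instead apply Cauchy--Schwarz generation by generation and spend the full smoothness of the Schwartz function against all $N$ vanishing moments. This gives the supercritical decay $\abs{Q}^{1/2}\ell(Q)^{N+1}$ and a bound $\ell(R)^{N+1-n_p} \le 1$, uniform over the whole range $0 < p \le 1$, with no need for the wavelet characterization of $\dot\Lambda_{n_p}$ or of $\mathrm{BMO}$. The paper's route shows that the block bound sees $\varphi$ only through its $\dot\Lambda_{n_p}$ (resp.\ $\mathrm{BMO}$) norm, so it would apply to any element of $(H^p)^*$, the natural setting of convention \textup{(P3)}. Yours is elementary and self-contained.

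Your handling of the regrouping is also more robust. You derive $\alpha^e(Q) = \sum_j \lambda^e_j \alpha^e_j(Q)$ from convergence in $t_{p}$ and then apply Fubini, which works for any $t_{p}$-atomic decomposition. The paper appeals to restriction form, which is not among the hypotheses of the lemma.

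There is one slip to correct. $\abs{Q^e_j}^{1/2-1/p} \ge 1$, not $\le 1$, since $\abs{Q^e_j} \le 1$ and the exponent is negative; your own main step notes that $\abs{R}^{1-1/p}$ blows up. The correct $j$-independent bound is $\abs{\alpha^e_j(Q)} \le \abs{Q^e_j}^{1/2-1/p} \le \abs{Q}^{1/2-1/p}$, because $\alpha^e_j(Q) \ne 0$ forces $Q \subseteq Q^e_j$. This still gives absolute convergence of each coordinate series. Fubini only needs absolute summability of the double family, which your main step provides, and it is that step, not a coordinatewise bound, that reduces the double sum to a uniform estimate over atoms. Similarly, in your first-order attempt $\abs{R}^{1-1/p}\ell(R) = \ell(R)^{1-n_p}$, so the obstruction is exactly $n_p > 1$.
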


\begin{proof}
By assumption, $\pair{f}{\varphi} = \sum_k \beta_k \pair{\phi_k}{\varphi} + \sum_{e} \sum_{Q \in \mathcal{D}^{0}} \alpha^e(Q) \pair{\psi^e_Q}{\varphi}$, so it suffices to prove absolute convergence of the displayed sums. The regrouping of the generated wavelet part by the blocks of the atomic decomposition is then legitimate, using $\lambda^e_j \alpha^e_j = \alpha^e \mathbf{1}_{\Delta^e_j}$ in restriction form (Theorem~\ref{thm:tp-atomic-restate}).

Since $\sum_k \abs{\beta_k}^p < \infty$ gives $\sup_k \abs{\beta_k} < \infty$, and $\varphi \in \mathcal S$ pairs with the uniformly translated family $\{\phi_k\}$ with rapid decay, $\abs{\pair{\phi_k}{\varphi}} \le C_N (1 + \abs{k})^{-N}$ for every $N$, the father wavelet sum converges absolutely.

For the generated wavelet part, we show, for each $e$ and $j$:
\begin{equation}
\label{eq:dist-conv-inner}
\sum_{Q \in \mathcal{D}^{0}} \abs{\alpha^e_j(Q)}\, \abs{\pair{\psi^e_Q}{\varphi}} \;\le\; C^{1/2},
\end{equation}
with $C$ depending only on $(n, p, \varphi, \text{wavelet basis})$. The outer sum then converges absolutely since $\sum_j \abs{\lambda^e_j} \le \bigl( \sum_j \abs{\lambda^e_j}^p \bigr)^{1/p} < \infty$ and $\# E < \infty$. Since $\scl(\alpha^e_j) \subseteq Q^e_j$, Cauchy-Schwarz Inequality gives:
\[
\sum_{Q \in \mathcal{D}^{0}} \abs{\alpha^e_j(Q)}\, \abs{\pair{\psi^e_Q}{\varphi}}
\le \Bigl( \sum_{Q \subseteq Q^e_j} \abs{\alpha^e_j(Q)}^2 \Bigr)^{1/2}
\Bigl( \sum_{Q \subseteq Q^e_j} \abs{\pair{\psi^e_Q}{\varphi}}^2 \Bigr)^{1/2}.
\]
The $t_{p}$-atom mass condition bounds the first factor by $\abs{Q^e_j}^{(1 - 2/p)/2}$. For the second, we use the Carleson-type condition for $\varphi \in \mathcal S(\mathbb{R}^n) \subseteq \dot\Lambda_{n_p}(\mathbb{R}^n)$ (for $p = 1$, $\mathcal S \subseteq \mathrm{BMO}$): for a uniform $C = C(n, p, \varphi, \text{wavelet basis})$,
\begin{equation}
\label{eq:carleson}
\sum_{Q \subseteq Q^e_j} \abs{\pair{\psi^e_Q}{\varphi}}^2 \;\le\; C\, \abs{Q^e_j}^{2/p - 1}.
\end{equation}
At $p = 1$, \eqref{eq:carleson} is the classical Carleson condition characterizing $\mathrm{BMO}$ by wavelet coefficients~\cite[Chapter~5, Theorem~4, pp.~154--155]{meyer1992}, and for $p < 1$ it follows from the pointwise coefficient bound of $\dot\Lambda_{n_p}$, $\abs{\pair{\psi^e_Q}{\varphi}} \le C' \abs{Q}^{1/p - 1/2}$ (see~\cite{frazier-jawerth1990}), by summing over the dyadic sub-cubes of $Q^e_j$: at scale $2^{-i} \le \ell(Q^e_j)$ there are $\abs{Q^e_j}\, 2^{in}$ such cubes, each contributing $\abs{Q}^{2/p - 1} = 2^{-in(2/p - 1)}$, and the geometric series $\abs{Q^e_j} \sum_i 2^{-in(2/p - 2)}$ converges (as $2/p - 2 > 0$) and is dominated by its top scale, $\simeq \abs{Q^e_j}^{2/p - 1}$. Multiplying the two factors, the powers of $\abs{Q^e_j}$ cancel exactly: $\abs{Q^e_j}^{(1 - 2/p)/2}\, \abs{Q^e_j}^{(2/p - 1)/2} = 1$, which proves~\eqref{eq:dist-conv-inner}.
\end{proof}

\begin{lemma}
\label{lem:atom-builder} Fix $e \in E$.
\begin{enumerate}[label=\textup{(\roman*)}, wide, leftmargin=0pt]
\item Let $(\beta, \alpha)$ be a $t_{p}$-atom with base $R \in \mathcal{D}^{0}$. Define:
\[
a(x) := \sum_{k \in \mathbb{Z}^n} \beta_k\, \phi_k(x) \;+ \sum_{Q \in \mathcal{D}^{0}} \alpha(Q)\, \psi^e_Q(x).
\]
Then the series converges unconditionally in $L^2(\mathbb{R}^n)$, $\supp(a) \subseteq m R$, and $\kappa_0^{-1} a$ is an $L^2$ $h^{p}$-atom associated with the cube $m R$, where $\kappa_0 = m^{n(1/p - 1/2)}$. The atom is small-cube when $\ell(m R) = m\, \ell(R) \leq 1$ and large-cube otherwise. In the small case the father layer vanishes, its clause forcing $\ell(R) = 1$ whenever $\beta \neq 0$, and all moments of $a$ up to order $N_p$ vanish, while in the large case no moment conditions are required.
\item Let $(\beta, \alpha^e) \in t_{p}$ and let $(\beta, \alpha^e) = \sum_j \lambda_j b_j$ be a decomposition in restriction form as in Theorem~\ref{thm:tp-atomic-restate}: pairwise disjoint blocks $\Delta_j \subseteq \mathbb{Z}^n \sqcup \mathcal{D}^{0}$ partitioning $\supp(\beta) \sqcup \supp(\alpha^e)$, the father blocks singletons $\{k\}$ based at $Q_k$, bases $R_j \in \mathcal{D}^{0}$, and $\lambda_j b_j = (\beta, \alpha^e)\, \mathbf{1}_{\Delta_j}$, layer by layer. Let $a_j$ be the function built from $b_j$ by \textup{(i)}, so that $\kappa_0^{-1} a_j$ is an $L^2$ $h^{p}$-atom associated with the cube $m R_j$, the father singletons contributing $\lambda_j\, a_j = \beta_k\, \phi_k$. Then each block series can be written as:
\[
\lambda_j\, a_j \;=\; \sum_{k \in \Delta_j} \beta_k\, \phi_k \;+ \sum_{Q \in \Delta_j} \alpha^e(Q)\,
\psi^e_Q ,
\]
one of the two sums empty for each block,
and converges unconditionally in $L^2(\mathbb{R}^n)$:
\[
\sum_j \abs{\kappa_0 \lambda_j}^p \;\le\; \kappa_0^p\,
C(n, p, \gamma)\, \norm{(\beta, \alpha^e)}_{t_{p}}^p ,
\]
and the series $\displaystyle \sum_j \lambda_j a_j = \sum_j (\kappa_0 \lambda_j) \bigl( \kappa_0^{-1} a_j \bigr)$ converges unconditionally in $h^p(\mathbb{R}^n)$, and its sum satisfies:
\[
\sum_j \lambda_j\, a_j \;=\; \sum_{k \in \mathbb{Z}^n} \beta_k\, \phi_k \;+ \sum_{Q \in \mathcal{D}^{0}} \alpha^e(Q)\,
\psi^e_Q \qquad \text{in } \mathcal{S}'(\mathbb{R}^n),
\]
the right-hand side converging unconditionally in $\mathcal{S}'(\mathbb{R}^n)$ (Lemma~\ref{lem:dist-conv}). If $\supp(\beta) \sqcup \supp(\alpha^e)$ is finite, the decomposition is finite and each $a_j$ is a finite linear combination of $\{\phi_k\}_{k \in \Delta_j} \cup \{\psi^e_Q\}_{Q \in \Delta_j}$.
\end{enumerate}
\end{lemma}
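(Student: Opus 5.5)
For \textup{(i)} we use orthonormality. The family $\{\phi_k\}_k \cup \{\psi^e_Q\}_{Q \in \mathcal{D}^{0}}$ is a subfamily of the orthonormal basis, and the atom's coefficients are square-summable with $\sum \abs{\beta_k}^2 + \sum \abs{\alpha(Q)}^2 \le \abs{R}^{1-2/p}$ by~\eqref{eq:tp-atom-mass}. Hence the series converges unconditionally in $L^2$, and Parseval gives $\norm{a}_2 \le \abs{R}^{1/2 - 1/p}$.

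For the support: every contributing $Q$ satisfies $Q \subseteq R$, so $\supp \psi^e_Q \subseteq mQ \subseteq mR$. The inclusion $mQ \subseteq mR$ for dyadic $Q \subseteq R$ is not automatic for centred dilates and needs checking. With the convention $\supp \psi_\lambda \subseteq m Q(\lambda)$, I would verify it from the identity $mQ = c_Q + m(Q - c_Q)$ and the fact that $Q \subseteq R$ implies $\ell(Q)\le \ell(R)$. If this fails literally, we enlarge $m$ once and for all, which only affects $\kappa_0$. The father term appears only when $R = Q_k$, and then $\supp \phi_k \subseteq mQ_k$. The $L^2$ limit is supported in $mR$ because each partial sum is.

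The size condition then follows:
\[
\norm{a}_2 \le \abs{R}^{1/2-1/p} = m^{n(1/p-1/2)} \abs{mR}^{1/2-1/p} = \kappa_0 \abs{mR}^{1/2 - 1/p}.
\]
For the moments in the small case $m\ell(R)\le 1$: since $m \ge 1$ we get $\ell(R) \le 1/m$, and the father layer vanishes unless $\ell(R) = 1$, which forces $m = 1$. In that boundary case I note that the father clause requires $R = Q_k$ with $\ell(mR) = 1$, and I would treat it separately or absorb it with a convention on $\ell_0 \le 1$. Otherwise $a$ is an $L^2$ limit of finite combinations of $\psi^e_Q$, each with vanishing moments up to $N \ge N_p+1$ by~\eqref{eq:moments}. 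Since $\supp a \subseteq mR$ is compact, $x^\nu \mathbf{1}_{mR} \in L^2$, so moments pass to the $L^2$ limit and vanish up to order $N_p$.

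For \textup{(ii)}, the block identity comes directly from the restriction form $\lambda_j b_j = (\beta,\alpha^e)\mathbf{1}_{\Delta_j}$ together with linearity and $L^2$-continuity of the synthesis map on the block. The bound on the coefficients is the atomic theorem (Theorem~\ref{thm:tp-atomic-restate}) applied with constant $C(n,p,\gamma)$, multiplied by $\kappa_0^p$.

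Unconditional convergence in $h^p$ then follows from Lemma~\ref{lem:net-summability}. For any finite $G$, we have
\[
\norm[\big]{\sum_{j\in G} \kappa_0\lambda_j (\kappa_0^{-1}a_j)}_{h^p}^p \le C_{\mathrm{at}}^p \sum_{j\in G} \abs{\kappa_0\lambda_j}^p,
\]
using~\eqref{eq:atom-hp-bound} and the $p$-triangle inequality for the $h^p$ quasi-norm. Since the coefficients are $\ell^p$-summable, the criterion gives summability, and completeness of $h^p$ provides the limit.

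Identifying this $h^p$ limit with the wavelet series in $\mathcal S'$ is the main obstacle. First, $h^p \hookrightarrow \mathcal S'$ continuously, so the $h^p$ limit is also the $\mathcal S'$ limit. Second, pairing with $\varphi \in \mathcal S$ and using the block identity, each $\pair{\lambda_j a_j}{\varphi}$ equals the block sum of $\beta_k\pair{\phi_k}{\varphi}$ and $\alpha^e(Q)\pair{\psi^e_Q}{\varphi}$. Lemma~\ref{lem:dist-conv}, via~\eqref{eq:carleson}, gives absolute convergence of the full double sum. This legitimizes regrouping over the partition $\{\Delta_j\}$, so both sides agree as distributions, and the right-hand side converges unconditionally in $\mathcal S'$.

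The finite-support case is immediate from the finiteness clause of Theorem~\ref{thm:tp-atomic-restate}.
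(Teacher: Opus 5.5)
Your proposal is correct and follows the paper's proof essentially step by step. It uses Parseval on the orthonormal subfamily for (i), passes the moments to the $L^2$ limit on the compact set $mR$, gets the block identity from the restriction form, obtains summability in $h^{p}$ from the criterion of Lemma~\ref{lem:net-summability} with~\eqref{eq:atom-hp-bound}, and regroups absolutely in $\mathcal{S}'(\mathbb{R}^n)$ via the uniform block bound~\eqref{eq:dist-conv-inner}.

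The two points you leave hedged both resolve cleanly. First, $Q \subseteq R$ means the centers differ by at most $(\ell(R)-\ell(Q))/2$ in each coordinate, so $mQ \subseteq mR$ holds automatically for $m \ge 1$. Second, the boundary case $m = 1$ cannot occur, because a continuous compactly supported scaling function has support longer than $1$; the paper uses this tacitly when it deduces $\ell(R) < 1$ from $m\,\ell(R) \le 1$.
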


\begin{proof}
For part \textup{(i)}: the family $\{\phi_k\}_{k \in \mathbb{Z}^n} \cup \{\psi^e_Q\}_{Q \in \mathcal{D}^{0}}$ is orthonormal in $L^2(\mathbb{R}^n)$, being part of the inhomogeneous basis, and the joint $t_{p}$-atom condition~\eqref{eq:tp-atom-mass} makes the pair square-summable. Hence, the series converges unconditionally in $L^2$ with:
\[
\norm{a}_2^2 = \sum_{k \in \mathbb{Z}^n} \abs{\beta_k}^2 + \sum_{Q \in \mathcal{D}^{0}} \abs{\alpha(Q)}^2 \le
\abs{R}^{1 - 2/p},
\]
by Parseval.

For the support, every $Q \in \supp(\alpha)$ satisfies $Q \subseteq R$ (the socle condition $\scl(\alpha) \subseteq R$), hence $\supp(\psi^e_Q) \subseteq m Q \subseteq m R$, and every $k$ with $\beta_k \neq 0$ has $Q_k \subseteq R$ (the father clause of the atom), hence $\supp(\phi_k) \subseteq m Q_k \subseteq m R$. Each partial sum is supported in $m R$, and $L^2$-limits preserve essential support, so $\supp(a) \subseteq m R$.

For the moments, we may assume the small-cube case $m\, \ell(R) \le 1$, where they are required: there $\ell(R) < 1$, and the father layer vanishes, since a unit cube $Q_k \subseteq R$ would force $\ell(R) \ge 1$. Each $\psi^e_Q$ has vanishing moments up to order $N \ge N_p + 1$ by~\eqref{eq:moments}. For $\abs{\nu} \le N_p$, the partial sums $a_F := \sum_{Q \in F} \alpha(Q) \psi^e_Q$, $F$ finite, converge to $a$ in $L^2(m R)$, hence in $L^1(m R)$ by Hölder's inequality on the bounded set $m R$. Since $x^\nu\, \mathbf{1}_{m R}$ is bounded, integration against it is continuous on $L^1(m R)$, so:
\[
\int_{\mathbb{R}^n} x^\nu\, a(x)\, dx = \lim_{F} \sum_{Q \in F}
\alpha(Q) \int_{\mathbb{R}^n} x^\nu\, \psi^e_Q(x)\, dx = 0 .
\]

The cube $m R$ has measure $\abs{m R} = m^n \abs{R}$. Since $1/2 - 1/p \le 0$:
\[
\norm{a}_2 \le \abs{R}^{1/2 - 1/p} = \bigl( m^n \bigr)^{1/p - 1/2}\, \abs{m R}^{1/2 - 1/p} = \kappa_0\, \abs{m R}^{1/2 - 1/p},
\]
so $\kappa_0^{-1} a$ satisfies the support and size conditions of an $L^2$ $h^{p}$-atom on the cube $m R$, the norm computed over both layers. When $\ell(m R) \leq 1$ it also satisfies the required moment conditions by the previous step, and when $\ell(m R) > 1$ no moment conditions are required.

For part \textup{(ii)}: each $b_j$ is a $t_{p}$-atom, so \textup{(i)} applies to it with base $R_j$: the built series $a_j$ converges unconditionally in $L^2(\mathbb{R}^n)$, the father singleton blocks giving $a_j = \phi_k$ outright. Multiplying by $\lambda_j$ and using the restriction identity $\lambda_j b_j = (\beta, \alpha^e)\, \mathbf{1}_{\Delta_j}$ term-wise on each layer:
\[
\lambda_j\, a_j \;=\; \sum_{k \in \Delta_j} \beta_k\, \phi_k \;+ \sum_{Q \in \Delta_j} \alpha^e(Q)\, \psi^e_Q ,
\]
discarding the vanishing terms, which affects neither the sum nor the unconditionality of its $L^2$ convergence. Only the blocks, being atoms, are square-summable.

By Theorem~\ref{thm:tp-atomic-restate}, the decomposition satisfies $\sum_j \abs{\lambda_j}^p \le C(n, p, \gamma)\, \norm{(\beta, \alpha^e)}_{t_{p}}^p$ (the father bound and~\eqref{eq:coeff-bound}), so multiplying by $\kappa_0^p$ gives the claim.

For the unconditional convergence in $h^{p}$: each $\kappa_0^{-1} a_j$ is an $L^2$ $h^{p}$-atom, so $\norm{\kappa_0^{-1} a_j}_{h^{p}} \le C_{\mathrm{at}}$ uniformly in $j$, by the uniform atomic bound~\eqref{eq:atom-hp-bound}. By $p$-sub-additivity of the quasi-norm, for every finite index set $G$:
\[
\norm[\Big]{\sum_{j \in G} \lambda_j\, a_j}_{h^{p}}^p \;\le\; C(n, p)^p \sum_{j \in G} \abs{\kappa_0 \lambda_j}^p ,
\]
so the $\ell^p$-summability of $\{\kappa_0 \lambda_j\}$ verifies the criterion of Lemma~\ref{lem:net-summability}, and the series is summable in $h^{p}$: the net of finite partial sums converges, unconditionally. Denote its sum by $f^e$. Since $h^{p}$ embeds continuously in $\mathcal{S}'(\mathbb{R}^n)$, the series also converges to $f^e$ in $\mathcal{S}'(\mathbb{R}^n)$.

For the identity, fix $\varphi \in \mathcal{S}(\mathbb{R}^n)$: applying Lemma~\ref{lem:dist-conv} to the mother layer of the decomposition at hand, and using the restriction identity together with Tonelli's Theorem:
\[
\sum_{Q \in \mathcal{D}^{0}} \abs{\alpha^e(Q)}\, \abs{\pair{\psi^e_Q}{\varphi}}
\;=\; \sum_j \abs{\lambda_j} \sum_{Q \in \Delta_j} \abs{\alpha_j(Q)}\, \abs{\pair{\psi^e_Q}{\varphi}} \;\le\; C^{1/2} \sum_j \abs{\lambda_j} \;<\; \infty ,
\]
by the uniform block bound~\eqref{eq:dist-conv-inner} and $\ell^p \hookrightarrow \ell^1$, while the father layer contributes $\sum_k \abs{\beta_k}\, \abs{\pair{\phi_k}{\varphi}} \le \bigl( \sup_k \abs{\pair{\phi_k}{\varphi}} \bigr) \sum_k \abs{\beta_k} < \infty$, again by $\ell^p \hookrightarrow \ell^1$. In particular, the combined synthesis $\sum_k \beta_k \phi_k + \sum_Q \alpha^e(Q) \psi^e_Q$ defines a tempered distribution $g$, with $\pair{g}{\varphi} = \sum_k \beta_k \pair{\phi_k}{\varphi} + \sum_Q \alpha^e(Q) \pair{\psi^e_Q}{\varphi}$ independently of the order of summation, and the convergence to $g$ is unconditional in $\mathcal{S}'(\mathbb{R}^n)$. On the other hand, $\mathcal{S}'(\mathbb{R}^n)$-convergence of the atomic series gives $\pair{f^e}{\varphi} = \sum_j \lambda_j \pair{a_j}{\varphi}$, and pairing the $L^2$-convergent block series against $\varphi \in \mathcal{S} \subseteq L^2$ gives:
\[
\lambda_j \pair{a_j}{\varphi} = \sum_{k \in \Delta_j} \beta_k\, \pair{\phi_k}{\varphi} \;+ \sum_{Q \in \Delta_j} \alpha^e(Q)\, \pair{\psi^e_Q}{\varphi} .
\]
Since $\{\Delta_j\}$ partitions $\supp(\beta) \sqcup \supp(\alpha^e)$ and the full scalar family is absolutely summable, the iterated sum may be regrouped freely:
\[
\pair{f^e}{\varphi} = \sum_j \biggl( \sum_{k \in \Delta_j} \beta_k\, \pair{\phi_k}{\varphi} \;+ \sum_{Q \in \Delta_j} \alpha^e(Q)\, \pair{\psi^e_Q}{\varphi} \biggr) = \pair{g}{\varphi} .
\]
As $\varphi \in \mathcal{S}$ was arbitrary, $f^e = g$ in $\mathcal{S}'(\mathbb{R}^n)$.

For the finite case, Theorem~\ref{thm:tp-atomic-restate} gives a finitely supported pair admits a finite decomposition, and each $\Delta_j$ is then finite, so each $\lambda_j a_j$ is a finite linear combination.
\end{proof}

\begin{lemma}
\label{lem:schwartz-correction} Let $0 < p \le 1 < q \le \infty$ and let $\varphi \in \mathcal S(\mathbb{R}^n)$ satisfy $\int_{\mathbb{R}^n} \varphi = 1$ and $\int_{\mathbb{R}^n} x^\nu\, \varphi(x)\, dx = 0$ for $0 < \abs{\nu} \le N_p$. Let $A$ be a large-ball $L^q$ $h^{p}$-atom on $B_A = B(x_A, r_A)$, $r_A \ge 1$, and $a$ a small-ball $L^q$ $h^{p}$-atom on $B_a = B(x_a, r_a)$, $r_a < 1$. Then, with vanishing moments up to order $N_p$ in every case, and for every $\varepsilon > 0$:
\begin{enumerate}[label=\textup{(\roman*)}, wide, leftmargin=0pt]
\item $A_\varphi := A - \varphi * A$ is, up to a multiplicative constant depending only on $(n, p, q, \varepsilon, \varphi)$, a classical $(p, q, \varepsilon)$-molecule of $H^p(\mathbb{R}^n)$ associated with $B_A$.
\item $a_\varphi := \varphi * a$ is, up to a multiplicative constant depending only on $(n, p, \varepsilon, \varphi)$, a classical $(p, \tilde q, \varepsilon)$-molecule of $H^p(\mathbb{R}^n)$ associated with the translated unit ball $B(x_a, 1)$, at once for every $1 < \tilde q \le \infty$.
\item There is $C = C(n, p, \varphi)$, independent of the atom and of its ball, with $\norm{\varphi * A}_p \le C$ and $\norm{\varphi * a}_p \le C$.
\end{enumerate}
\end{lemma}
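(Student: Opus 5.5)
The plan is to verify the two molecule conditions, size on the annuli $A_k$ and vanishing moments, directly for each of $A_\varphi$ and $a_\varphi$, and then to read off (iii) from (ii) together with a separate bound for $\varphi*A$.

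\textbf{Moments.} These are Fourier computations. Since $A\in L^1$ by \eqref{eq:atom-l1} and $\varphi\in\mathcal S$, the functions $\varphi*A$ and $\varphi*a$ are smooth and decay at the rate of the atom's tail convolved with $\varphi$. Hence $x^\nu(\varphi*A)$ is integrable, and $\widehat{\varphi*A}=\hat\varphi\,\hat A$. The admissibility of $\varphi$ says exactly that $\hat\varphi(0)=1$ and $\partial^\nu\hat\varphi(0)=0$ for $0<\abs{\nu}\le N_p$. By Leibniz,
\[
\partial^\nu(\hat A-\hat\varphi\hat A)(0)=\partial^\nu\bigl((1-\hat\varphi)\hat A\bigr)(0)=0
\]
for $\abs{\nu}\le N_p$, which gives the moments in (i). In (ii) we have $\partial^\nu(\hat\varphi\hat a)(0)=0$ because $\hat a$ vanishes to order $N_p$ at the origin. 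The moments are absolutely convergent because of the decay established below.

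\textbf{Size for (ii).} Here I use the cancellation of $a$. Write $\varphi*a(x)=\int a(y)\bigl[\varphi(x-y)-P_{x-x_a}(x_a-y)\bigr]dy$, subtracting the Taylor polynomial of $\varphi$ of degree $N_p$ expanded at $x-x_a$. By \eqref{eq:taylor-holder} with $k=N_p$, $\delta=1$ (applied to the smooth $\varphi$, sup-type remainder on the segment), the bracket is bounded by $r_a^{N_p+1}$ times a decay factor $C_M(1+\abs{x-x_a})^{-M}$ for every $M$. Using \eqref{eq:atom-l1} this gives
\[
\abs{\varphi*a(x)}\lesssim r_a^{N_p+1+n-n/p}(1+\abs{x-x_a})^{-M}\le (1+\abs{x-x_a})^{-M},
\]
since $N_p+1>n_p$ and $r_a<1$. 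This is the key step: the margin $N_p+1>n_p$ is what makes the bound uniform in $r_a$. Integrating this pointwise bound over the annuli $A_k(B(x_a,1))$ in $L^{\tilde q}$ gives $\lesssim 2^{-kM'}$. Taking $M$ large yields $2^{-k\varepsilon}\abs{2^kB}^{1/\tilde q-1/p}$ for every $\tilde q$ at once.

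\textbf{Size for (i).} On $A_0(B_A)=2B_A$, Young's inequality gives $\norm{A-\varphi*A}_q\lesssim\norm{A}_q\le\abs{B_A}^{1/q-1/p}$. For $k\ge1$, $A$ vanishes on $A_k$, so only $\varphi*A$ contributes. For $x\in A_k$ and $y\in B_A$ we have $\abs{x-y}\gtrsim 2^kr_A$, so $\abs{\varphi*A(x)}\lesssim (2^kr_A)^{-M}\norm{A}_1\le (2^kr_A)^{-M}\abs{B_A}^{1-1/p}$. Then
\[
\norm{\varphi*A}_{L^q(A_k)}\lesssim (2^kr_A)^{-M+n/q}\,r_A^{\,n-n/p}.
\]
I must compare this with $2^{-k\varepsilon}(2^kr_A)^{n/q-n/p}$. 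The needed inequality reduces to $2^{-k(M-\varepsilon-n/p)}r_A^{-M+n/p}\cdot r_A^{\,n-n/p}\lesssim1$, which holds because $r_A\ge1$ and $M$ is large. The hypothesis $r_A\ge1$ is exactly what allows this, with no moments of $A$ available.

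\textbf{Part (iii).} For $\varphi*a$, integrate the pointwise decay from (ii): $\norm{\varphi*a}_p^p\lesssim\int(1+\abs{x})^{-Mp}dx<\infty$ once $Mp>n$. For $\varphi*A$, use Young/H\"older on $4B_A$: $\norm{\varphi*A}_{L^p(4B_A)}\le\abs{4B_A}^{1/p-1/q}\norm{A}_q\lesssim1$. Off $4B_A$, the tail bound above integrates to $\lesssim r_A^{-Mp+n}r_A^{np-n}\cdot(\text{const})\lesssim1$, again using $r_A\ge1$.

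The main obstacle I expect is the bookkeeping in (ii): the Taylor remainder must be arranged so that the decay in $x$ survives uniformly in $r_a$. I would handle it by bounding the derivatives $\partial^{\nu}\varphi$, $\abs{\nu}=N_p+1$, on the segment between $x-y$ and $x-x_a$ by $C_M(1+\abs{x-x_a})^{-M}$, valid since $\abs{y-x_a}<1$.
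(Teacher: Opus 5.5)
Your proposal is correct and follows the paper's proof essentially step for step. It uses the same Young-plus-tail-separation argument for $A_\varphi$ with $r_A \ge 1$ absorbing the missing moments, the same Taylor-remainder envelope for $\varphi * a$ driven by $N_p + 1 > n_p$, and the same $L^p$ integration of these envelopes for (iii). The differences are cosmetic: you compute the moments on the Fourier side via Leibniz rather than by the paper's Fubini/binomial expansion, which is the same identity. You also expand to degree $N_p$ uniformly in $p$, where the paper uses degree $N_p'$ and treats $p=1$ separately; this works equally well, since small atoms have vanishing moments up to order $N_p$.
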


\begin{proof}
Fix $\varepsilon > 0$ and set $M := n/p + \varepsilon$. Since $\varphi \in \mathcal S(\mathbb{R}^n)$, there is, for every $N \ge 0$, a constant $C_N(\varphi)$ with $\abs{\partial^\mu \varphi(x)} \le C_N(\varphi)\, (1 + \abs{x})^{-N}$ for every multi-index $\mu$ with $\abs{\mu} \le N_p' + 1$ and every $x \in \mathbb{R}^n$; fix $N := M$ and write $C_\varphi := C_M(\varphi)$. In particular $\varphi \in L^1$ with $\norm{\varphi}_1 \le C_\varphi \int_{\mathbb{R}^n} (1 + \abs{y})^{-M}\, dy < \infty$, all moment integrals below converge absolutely, and Fubini's Theorem applies throughout. The separation estimate: for $z \in B(x_0, r_0)$ and $\abs{x - x_0} \ge 2 r_0$, $\abs{x - z} \ge \tfrac12 \abs{x - x_0}$ gives $(1 + \abs{x - z})^{-M} \le 2^M (1 + \abs{x - x_0})^{-M}$, and when $r_0 \le 1$ the same bound holds for all $x$ up to a constant, since on $\abs{x - x_0} \le 2 r_0 \le 2$ one has $(1 + \abs{x-z})^{-M} \le 1$ while $(1+\abs{x-x_0})^{-M} \ge 3^{-M}$. Finally, every $L^q$ $h^{p}$-atom satisfies $\norm{a}_1 \le \abs{B}^{1-1/p}$ by Hölder's Inequality on its ball $B$.

For part (i), the moments. By Fubini's Theorem, the substitution $z = x - y$, and the Binomial Theorem, for $\abs{\nu} \le N_p$:
\[
  \int (\varphi * A)(x)\, x^\nu\, dx
  = \sum_{\beta \le \nu} \binom{\nu}{\beta}
     \Bigl( \int A(z)\, z^\beta\, dz \Bigr)
     \Bigl( \int \varphi(y)\, y^{\nu - \beta}\, dy \Bigr) .
\]
The term $\beta = \nu$ equals $\int A\, x^\nu\, dx$ since $\int \varphi = 1$, and every term $\beta < \nu$ vanishes since $\int \varphi(y)\, y^{\nu-\beta}\, dy = 0$ for $0 < \abs{\nu - \beta} \le N_p$. Hence $\int A_\varphi\, x^\nu\, dx = 0$ for all $\abs{\nu} \le N_p$.

For the size, by the Triangle and Young Inequalities:
\begin{equation}
\label{eq:mol-size-A-schwartz}
  \norm{A_\varphi}_q
  \;\le\; \bigl( 1 + \norm{\varphi}_1 \bigr)
     \norm{A}_q
  \;\le\; \bigl( 1 + \norm{\varphi}_1 \bigr)
     \abs{B_A}^{1/q - 1/p} .
\end{equation}
As for the tail, for $\abs{x - x_A} \ge 2 r_A$ we have $A_\varphi(x) = -(\varphi * A)(x)$, and by the separation estimate together with $\norm{A}_1 \le \abs{B_A}^{1-1/p}$:
\[
  \abs{A_\varphi(x)}
  \;\le\; \norm{A}_1
     \sup_{y \in B_A} \abs{\varphi(x - y)}
  \;\le\; C_\varphi\, 2^M\, \abs{B_A}^{1 - 1/p}\,
     (1 + \abs{x - x_A})^{-M} .
\]
Restricting to $A_k(B_A)$, $k \ge 1$, of inner radius $s_k = 2^k r_A \ge 2^k$, this gives $\abs{A_\varphi(x)} \le C\, \abs{B_A}^{1-1/p}\, s_k^{-M}$ there, and since $\abs{B_A}^{1-1/p} = \omega_n^{1-1/p}\, r_A^{-n_p} \le \omega_n^{1-1/p}$ as $r_A \ge 1$, $\abs{A_k(B_A)}^{1/q} = \bigl( (2^n-1)\, \omega_n \bigr)^{1/q}\, s_k^{\,n/q}$, and $M = n/p + \varepsilon$:
\[
  \norm{A_\varphi}_{L^q(A_k(B_A))}
  \;\le\; C(n, q, \varphi)\; s_k^{\,n/q - n/p - \varepsilon}
  \;\le\; C(n,q,\varphi)\, 2^{-k\varepsilon}\, \abs{2^k B_A}^{1/q-1/p} ,
\]
using $s_k = 2^k r_A \ge 2^k$, so that $s_k^{-\varepsilon} \le 2^{-k\varepsilon}$ as $r_A \ge 1$, and $\abs{2^kB_A}^{1/q-1/p} = \omega_n^{1/q-1/p}\, s_k^{\,n/q-n/p}$. Together with~\eqref{eq:mol-size-A-schwartz} on $2B_A$, this is condition (i) of a classical $(p,q,\varepsilon)$-molecule associated with $B_A$, up to a constant depending only on $(n,p,q,\varepsilon,\varphi)$, and the moments were verified above.

For part (ii), the moments, as before. The same binomial expansion gives $\int a_\varphi(x)\, x^\nu\, dx = 0$ for $\abs{\nu} \le N_p$, every factor $\int a(z)\, z^\beta\, dz$ vanishing by the moment conditions of the small atom, so no condition on $\varphi$ is used here.

For the pointwise envelope, for $p = 1$ this is immediate: $\abs{a_\varphi(x)} \le \norm{a}_1 \sup_{y \in B_a} \abs{\varphi(x-y)} \le C_\varphi\, 2^M\, (1+\abs{x-x_a})^{-M}$, using $\norm{a}_1 \le 1$ and the separation estimate. For $p < 1$, let $P_x$ be the Taylor polynomial of degree $N_p'$ of $y \mapsto \varphi(x-y)$ at $y = x_a$. The moments of $a$, of order up to $N_p \ge N_p'$, annihilate it, so $a_\varphi(x) = \int_{B_a} [\varphi(x-y) - P_x(y)]\, a(y)\, dy$, and Taylor's Theorem with the derivative bound $\abs{\partial^\mu \varphi(x-z)} \le C_\varphi (1+\abs{x-z})^{-M}$ for $\abs{\mu} = N_p' + 1$, $z \in [x_a, y] \subseteq B_a$, together with the separation estimate at $r_0 = r_a \le 1$, gives:
\[
  \abs{\varphi(x-y) - P_x(y)}
  \;\le\; C(n, N_p')\, C_\varphi\, (1+\abs{x-x_a})^{-M}\, r_a^{\,N_p'+1} .
\]
Multiplying by $\norm{a}_1 \le \abs{B_a}^{1-1/p} = \omega_n^{1-1/p}\, r_a^{-n_p}$ and using $N_p' + 1 \ge n_p$ (since $N_p' \le n_p \le N_p' + 1$), the power of $r_a$ is bounded, $r_a^{N_p'+1-n_p} \le 1$ as $r_a < 1$, giving:
\begin{equation}
\label{eq:small-weighted-schwartz}
  \abs{a_\varphi(x)}
  \;\le\; C(n, p, \varphi)\, (1+\abs{x-x_a})^{-M} , \qquad x \in \mathbb{R}^n .
\end{equation}
For the molecule property, at every exponent, fix $1 < \tilde q \le \infty$. Since $(1+\abs{x})^{-M} \in L^{\tilde q}(\mathbb{R}^n)$ for $M > n/\tilde q$, which holds as $M > n/p \ge n \ge n/\tilde q$, \eqref{eq:small-weighted-schwartz} gives $\norm{a_\varphi}_{\tilde q} \le C(n,p,\tilde q,\varphi)$, while $\abs{B(x_a,1)}^{1/\tilde q - 1/p}$ is a dimensional constant, and for $\abs{x-x_a} \ge 2$ the envelope passes to the annuli of $B(x_a,1)$ exactly as in part (i), giving condition (i) of a classical $(p,\tilde q,\varepsilon)$-molecule associated with $B(x_a,1)$, up to a constant depending only on $(n,p,\varepsilon,\varphi)$. The constants are independent of $\tilde q$ and of the exponent $q$ of the input atom, and the moments were verified above.

For part (iii), fix $\tilde M := n/p + 1$, so that $(1+\abs{x})^{-\tilde M} \in L^p(\mathbb{R}^n)$ since $\tilde M p > n$. For the small atom, \eqref{eq:small-weighted-schwartz} at $M = \tilde M$ gives $\abs{\varphi * a(x)} = \abs{a_\varphi(x)} \le C(n,p,\varphi)\, (1+\abs{x-x_a})^{-\tilde M}$ for every $x$, whence $\norm{\varphi * a}_p \le C(n,p,\varphi)$. For the large atom, on $2B_A$, Hölder's Inequality and Young's Inequality give $\norm{\varphi * A}_{L^p(2B_A)} \le \abs{2B_A}^{1/p-1/q}\, \norm{\varphi * A}_q \le \abs{2B_A}^{1/p-1/q}\, \norm{\varphi}_1\, \norm{A}_q \le 2^{n(1/p-1/q)}\, \norm{\varphi}_1$, using $\abs{B_A}^{1/p-1/q} \norm{A}_q \le 1$; on $\abs{x-x_A} \ge 2r_A$, the same tail bound underlying part (i) at $M = \tilde M$ gives $\abs{\varphi*A(x)} \le C(n,\varphi)\, (1+\abs{x-x_A})^{-\tilde M}$, using $\abs{B_A}^{1-1/p} \le \omega_n^{1-1/p}$ as $r_A \ge 1$, whence $\norm{\varphi*A}_{L^p(\abs{x-x_A}\ge 2r_A)} \le C(n,p,\varphi)$. Combining the two regions gives $\norm{\varphi*A}_p \le C(n,p,\varphi)$.
\end{proof}

\begin{lemma}
\label{lem:wavelet-LP} Let $1 < q < \infty$. For every $f \in L^q(\mathbb{R}^n)$:
\[
\norm[\bigg]{\biggl(\sum_{\lambda \in \Lambda} \abs{\pair{f}{\psi_\lambda}}^2\, \abs{\psi_\lambda}^2 \biggr)^{1/2}}_q \;\le\; c_q\, \norm{f}_q,
\]
where $c_q$ depends only on $n$, $q$, and the wavelet basis. The same bound holds, with the same constant, when $\Lambda$ is replaced by any of its subsets.
\end{lemma}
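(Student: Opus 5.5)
The plan is the classical Rademacher-averaging argument: reduce the square-function bound to uniform $L^q$ boundedness of the sign-modulated wavelet multipliers, then use Khintchine's inequality pointwise. By monotone convergence (the partial square functions increase along any exhaustion, as noted after Lemma~\ref{lem:net-summability}), it suffices to prove the bound for every finite $F \subseteq \Lambda$, with a constant independent of $F$. This also settles the last sentence of the statement, since a subset of $\Lambda$ is exhausted by its own finite subsets, which are finite subsets of $\Lambda$. For $f \in L^q$ the coefficients are the ordinary integrals $\int f \bar\psi_\lambda$, because $\psi_\lambda \in L^{q'}$ is compactly supported and bounded.

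For $\varepsilon \in \{\pm 1\}^F$, define the finite-rank operator
\[
T_\varepsilon f := \sum_{\lambda \in F} \varepsilon(\lambda)\, \pair{f}{\psi_\lambda}\, \psi_\lambda .
\]
The key step is the uniform bound $\norm{T_\varepsilon f}_q \le C_q \norm{f}_q$, with $C_q$ independent of $F$ and $\varepsilon$. Granting it, apply the scalar Khintchine inequality (the $q$-version of Lemma~\ref{lem:khinchin-p}, valid for all $0<q<\infty$) at each point $x$ to $a_\lambda := \pair{f}{\psi_\lambda}\psi_\lambda(x)$. This gives
\[
\Bigl(\sum_{\lambda\in F}\abs{\pair{f}{\psi_\lambda}}^2\abs{\psi_\lambda(x)}^2\Bigr)^{q/2} \le A_q^{-q}\, \mathbb{E}_\varepsilon \abs{T_\varepsilon f(x)}^q .
\]
Integrating in $x$ and exchanging with the finite average yields the claim with $c_q = A_q^{-1} C_q$.

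For the uniform bound, I would treat $T_\varepsilon$ as a Calderón--Zygmund operator with kernel
\[
K_\varepsilon(x,y) = \sum_{\lambda\in F} \varepsilon(\lambda)\, \psi_\lambda(x)\overline{\psi_\lambda(y)} .
\]
The $L^2$ bound is immediate with constant $1$, by orthonormality and Bessel's inequality. For the size and regularity estimates, fix a scale $j$: the bounded overlap of $\supp \psi_\lambda \subseteq mQ(\lambda)$ gives $\sum_{\lambda \text{ at scale } j} \abs{\psi_\lambda(x)}\abs{\psi_\lambda(y)} \lesssim 2^{nj}\mathbf{1}_{\abs{x-y}\le C2^{-j}}$. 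Summing over $j$ with $2^{-j} \gtrsim \abs{x-y}$ gives $\abs{K_\varepsilon(x,y)} \lesssim \abs{x-y}^{-n}$. In the same way, using $\psi \in C^r$ with $r>0$, choose $\delta := \min(r,1)$ and the Hölder bound $\abs{\psi_\lambda(x)-\psi_\lambda(x')} \lesssim 2^{nj/2}(2^j\abs{x-x'})^{\delta}$. This gives
\[
\abs{K_\varepsilon(x,y)-K_\varepsilon(x',y)} \lesssim \abs{x-x'}^\delta \abs{x-y}^{-n-\delta} \qquad \text{for } \abs{x-x'}\le \tfrac12\abs{x-y},
\]
and symmetrically in $y$. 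All constants are independent of $F$ and $\varepsilon$, since only absolute values of the terms enter. Calderón--Zygmund theory (weak $(1,1)$, then Marcinkiewicz interpolation for $1<q\le 2$, and duality for $2\le q<\infty$, the adjoint being $T_{\bar\varepsilon}$, which is of the same form) then gives the $L^q$ bound.

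The main obstacle is the kernel regularity estimate. There one must split the sum over scales at $2^{-j}\approx\abs{x-y}$: scales with $2^{-j} < \abs{x-y}/C$ contribute nothing because of the supports, while the coarser scales yield the geometric series $\sum_{2^{-j}\gtrsim\abs{x-y}} 2^{j(n+\delta)}\abs{x-x'}^\delta$. This series is dominated by its smallest admissible $2^{-j}$ and so gives the stated decay. The remaining steps are routine.
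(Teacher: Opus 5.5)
Your proof is correct, but it takes a different route from the paper's.

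\textbf{The paper's route.} It runs a vector-valued Calder\'on--Zygmund argument. For finite $F$ it treats $f \mapsto (\pair{f}{\psi_\lambda}\,\psi_\lambda)_{\lambda \in F}$ as a single operator into $L^q(\ell^2(F))$, whose $L^q(\ell^2)$ norm is the square function itself. The $L^2$ bound is Bessel's inequality, and the size and H\"ormander estimates are verified for the $\ell^2(F)$-valued kernel $(\psi_\lambda(x)\overline{\psi_\lambda(y)})_{\lambda \in F}$. The Banach-space-valued CZ theorem, with interpolation and duality, then gives all $1<q<\infty$.

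\textbf{Your route.} You apply scalar CZ theory to the sign-modulated operators $T_\varepsilon$, uniformly in $(F,\varepsilon)$. You then recover the square function pointwise by Khintchine's inequality.

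\textbf{Comparison.} The kernel estimates are the same in substance. The $\ell^2$ norm of the paper's vector kernel, and of its differences, is dominated by the absolute sums $\sum_\lambda \abs{\psi_\lambda(x)}\abs{\psi_\lambda(y)}$ and $\sum_\lambda \abs{\psi_\lambda(x)-\psi_\lambda(x')}\abs{\psi_\lambda(y)}$. You bound exactly these scale by scale, with the same support cutoff and the same geometric series. So the two arguments differ only in where the $\ell^2$ structure enters.

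Your version has three advantages:
\begin{itemize}
\item It needs only the scalar CZ theorem.
\item Duality is trivial, since $T_\varepsilon$ is self-adjoint for real signs.
\item As a by-product it proves the uniform $L^q$ boundedness of the sign multipliers, i.e.\ the unconditionality of the wavelet system in $L^q$. That is precisely the fact Corollary~\ref{cor:lq-convergence} later invokes.
\end{itemize}
Its cost is the Khintchine step, which gives a constant of the form $A_q^{-1}C_q$. The paper's version gets the square-function bound in one step with no averaging over signs, but it relies on vector-valued singular integral theory and on handling the adjoint as an operator on $L^{q'}(\ell^2)$.
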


\begin{proof}
This bound is one inequality of the two-sided square function equivalence in the classical theorem that orthonormal wavelet bases with mild size and regularity assumptions are unconditional bases of $L^q(\mathbb{R}^n)$, $1 < q < \infty$ (see~\cite[Section~6.2]{meyer1992} and~\cite[Chapter~8]{wojtaszczyk1997}). The proof is a vector-valued Calderón–Zygmund argument: for finite $F \subseteq \Lambda$, the operator $f \mapsto \bigl( \pair{f}{\psi_\lambda}\, \psi_\lambda \bigr)_{\lambda \in F}$ has an $\ell^2(F)$-valued kernel whose size and Hörmander estimates hold with constants determined solely by $n$, the support constant $m$, $\max_e \norm{\psi^e}_\infty$, and a Hölder constant of the generators at any exponent $\delta \in (0, \min(r, 1)]$. These hypotheses are exactly satisfied by the compactly supported Daubechies system fixed above. The case $q = 2$ is Bessel's inequality, and the Calderón–Zygmund theorem for Banach space valued kernels (see, e.g., \cite[Chapter~V]{garcia-cuerva-rubio1985} or~\cite[Section~5.6]{Grafakos2014}) together with Marcinkiewicz interpolation and duality gives all $1 < q < \infty$, with constants uniform in $F$. The Monotone Convergence Theorem, applied along an exhaustion of $\Lambda$ by finite sets, then yields the lemma and the subset clause follows since the summed terms are non-negative. Only the range $1 < q \le 2$ is invoked in this paper, in the proof of Lemma~\ref{lem:annulus-estimate-hp}.
\end{proof}

\begin{lemma}
\label{lem:annulus-estimate-hp} Let $1 < q \le \infty$ and let $a$ be an $L^q$ $h^{p}$-atom associated with the ball $B = B(x_0, r_B)$. Then, with:
\[
  S_{\mathrm{Loc}}^{\psi}(a)(x) := \biggl( \sum_{\lambda \in \Lambda^0}
     \abs{\pair{a}{\psi_\lambda}}^2\,
     \abs{\psi_\lambda(x)}^2 \biggr)^{1/2},
\]
the mother block of $S_{\mathrm{Loc}}$ in~\eqref{eq:SLoc-def}, we have:
\[
  \int_{\mathbb{R}^n} S_{\mathrm{Loc}}^{\psi}(a)^{p}\, dx
  \;\le\; C(n, p, q, m, \text{wavelet basis}) \;<\; \infty,
\]
uniformly over all such atoms.
\end{lemma}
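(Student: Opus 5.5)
We reduce first to $1 < q \le 2$: an $L^q$ $h^{p}$-atom with $q > 2$ is, by Hölder's Inequality on $B$, also an $L^2$ $h^{p}$-atom, so it suffices to treat $\tilde q := \min(q, 2)$. We split $\mathbb{R}^n$ into the near region $\tilde B := 2m\,B$ (a fixed dilate absorbing the wavelet supports) and its complement, and we treat small atoms ($r_B < 1$) and large atoms ($r_B \ge 1$) separately.

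\textbf{Near region.} Apply Hölder's Inequality at the exponent $\tilde q/p \ge 1$ and then Lemma~\ref{lem:wavelet-LP}, with $\Lambda^0 \subseteq \Lambda$ through its subset clause. This gives
\[
\int_{\tilde B} S_{\mathrm{Loc}}^{\psi}(a)^p \le \abs{\tilde B}^{1 - p/\tilde q}\, \norm{S_{\mathrm{Loc}}^{\psi}(a)}_{\tilde q}^p \le c_{\tilde q}^p\, (2m)^{n(1-p/\tilde q)}\, \abs{B}^{1-p/\tilde q}\, \abs{B}^{p/\tilde q - 1} \lesssim 1,
\]
using the size condition $\norm{a}_{\tilde q} \le \abs{B}^{1/\tilde q - 1/p}$. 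This bound covers both small and large atoms.

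\textbf{Far region, large atoms.} Here we must use the local character of $\Lambda^0$. Take $x \notin \tilde B$. A wavelet $\psi_\lambda$ with $\lambda \in \Lambda^0$ and $x \in \supp\psi_\lambda \subseteq mQ(\lambda)$ has diameter $\lesssim m\,\ell(Q(\lambda)) \le m$. If such a $\psi_\lambda$ also met $B$, then $x$ would lie within distance $\lesssim m$ of $B$. Since $r_B \ge 1$, this is excluded once the dilation factor in $\tilde B$ is chosen large enough (of order $m + \sqrt n$, adjusting the constant). Hence $\pair{a}{\psi_\lambda} = 0$ for all the relevant $\lambda$, so $S_{\mathrm{Loc}}^{\psi}(a) = 0$ off $\tilde B$, and the near estimate already finishes this case.

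\textbf{Far region, small atoms.} This is the main obstacle, where the moments of $a$ must be spent. Fix $x$ with $\abs{x - x_0} \simeq 2^i r_B$, $i \ge 1$, beyond $\tilde B$. Only $\lambda$ with $2^{-j} \gtrsim 2^{i} r_B/m$ can contribute, since $\supp \psi_\lambda$ must meet both $x$ and $B$; at each such scale, boundedly many $\lambda$ (depending on $m$) contribute. For these we subtract from $\psi_\lambda$ its Taylor polynomial of degree $N_p'$ at $x_0$, which $a$ annihilates (as $N_p' \le N_p$). Using $\psi_\lambda \in C^{N_p',\delta_p}$ with $N_p' + \delta_p = n_p < r$, the estimate~\eqref{eq:taylor-holder} gives
\[
\abs{\pair{a}{\psi_\lambda}} \lesssim 2^{j(n/2 + n_p)}\, r_B^{n_p}\, \norm{a}_1 \lesssim 2^{j(n/2+n_p)},
\]
by~\eqref{eq:atom-l1}, since $r_B^{n_p}\abs{B}^{1-1/p} \simeq 1$. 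At the endpoint $\delta_p = 1$ one uses $C^{N_p'+1}$, which is available since $r > n_p$. To gain decay we instead take the exponent slightly larger, $n_p < s < \min(r, N_p+1)$, where the cap $s < N_p + 1$ is needed because $a$ has vanishing moments only up to order $N_p$, and $N_p + 1 > n_p$ guarantees such an $s$ exists. With $s = N_p + \delta$, $\delta$ now the Hölder exponent of $C^{N_p,\delta}$, and the Taylor polynomial taken of degree $N_p$, we get
\[
\abs{\pair{a}{\psi_\lambda}} \lesssim 2^{j(n/2 + s)}\, r_B^{s}\, \abs{B}^{1-1/p}.
\]
Multiplying by $\abs{\psi_\lambda(x)} \lesssim 2^{jn/2}$ and summing the geometric series over the admissible scales $2^{j} \lesssim m\,2^{-i} r_B^{-1}$, dominated by the finest scale, yields
\[
S_{\mathrm{Loc}}^{\psi}(a)(x) \lesssim (2^{i})^{-(n+s)}\, r_B^{-n/p}.
\]
Integrating the $p$-th power over the annulus $\abs{x-x_0} \simeq 2^i r_B$ gives a contribution $\lesssim 2^{in}\, 2^{-ip(n+s)}$, and $p(n+s) > n$ holds precisely because $s > n_p$. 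Summing over $i$ gives a uniform bound, which completes the proof. The delicate point is this exponent bookkeeping: the regularity margin $r > n_p$ and the moment margin $N \ge N_p + 1$ must both allow some $s > n_p$; this is where the exactness of the margins in the convention enters.
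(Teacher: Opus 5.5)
Your proposal is correct and follows essentially the same route as the paper. Both arguments reduce to $q \le 2$, then apply H\"older and the wavelet Littlewood--Paley bound on an inflated ball of radius $\simeq (2 + m\sqrt{n})\, r_B$, then show that $S^{\psi}_{\mathrm{Loc}}(a)$ vanishes off that ball for large atoms, and for small atoms use a Taylor expansion of degree $N_p$ at $x_0$ with H\"older exponent $\delta = s - N_p \in (n_p - N_p, \min(r - N_p, 1))$ over the scale window, which yields annular contributions $2^{-kp(s - n_p)}$. Two minor points: your preliminary degree-$N_p'$ computation is superfluous, and in your closing remark the cap $s < N_p + 1$ comes from the atom's vanishing moments, not from the wavelets' margin $N \ge N_p + 1$, which plays no role in this lemma.
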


\begin{proof}
We follow the strategy of~\cite[Theorem~4.2.1, (G)$\implies$(B)]{elaoumari2025} with few key changes.\\
The case $2 \le q \le \infty$ reduces to $q = 2$. By Hölder's Inequality on $B$:
\[
\norm{a}_2 \le \abs{B}^{1/2 - 1/q}\, \norm{a}_q \le \abs{B}^{1/2 - 1/p},
\]
so $a$ is an $L^2$ $h^{p}$-atom associated with the same ball, with the same moment conditions if required, and the conclusion follows from the case $q = 2$ with a $q$-independent constant. We may thus assume $1 < q \le 2$ in what follows.\\
Let $C_0 := 2 + m\sqrt{n} > 1$, and decompose $\mathbb{R}^n$ into the inflated ball $\tilde{B} = B(x_0, C_0 r_B)$ and the concentric dyadic annuli $A_k = \{x \in \mathbb{R}^n : 2^k \le \abs{x - x_0}/(C_0 r_B) < 2^{k+1}\}$, $k \in \mathbb{N} \cup \{0\}$. Since these sets partition $\mathbb{R}^n$ and by the $p$-triangle Inequality, we have:
\[
\int_{\mathbb{R}^n} S_{\mathrm{Loc}}^{\psi}(a)^p \leq \int_{\Tilde{B}} S_{\mathrm{Loc}}^{\psi}(a)^p + \sum_{k=0}^{\infty} \int_{A_{k}} S_{\mathrm{Loc}}^{\psi}(a)^p.
\]
For the contribution from $\Tilde{B}$, by Hölder's Inequality:
\[
\int_{\Tilde{B}} S_{\mathrm{Loc}}^{\psi}(a)^p \leq \abs{\Tilde{B}}^{1-p/q} \norm{S_{\mathrm{Loc}}^{\psi}(a)}_q^{p}
\]
By the Littlewood-Paley Inequality for the wavelet system (Lemma~\ref{lem:wavelet-LP} above), together with the trivial pointwise domination of $S_{\mathrm{Loc}}^{\psi}(a)$ by the full square function, and the size condition on the atom:
\[
\norm*{S_{\mathrm{Loc}}^{\psi}(a)}_q \le c_q \norm*{a}_q \leq c_q\, \abs*{B}^{1/q-1/p}.
\]
Combining:
\[
\int_{\Tilde{B}} S_{\mathrm{Loc}}^{\psi}(a)^p \leq c_q^p\, C_0^{n(1-p/q)}.
\]
For the contribution from the annuli $A_k$, we have two cases:\\
\noindent For large-ball atoms, $r_B \ge 1$: If $\pair{a}{\psi_\lambda} \ne 0$ and $\psi_\lambda(x) \ne 0$, then $m Q(\lambda)$ meets both $B$ and $\{x\}$, so $\abs{x - x_0} \le r_B + \diam(m Q(\lambda)) \le r_B + m\sqrt{n}\, 2^{-j} \le r_B (1 + m\sqrt{n}) < C_0 r_B$, using $2^{-j} \le 1 \le r_B$. Hence $S_{\mathrm{Loc}}^{\psi}(a)$ vanishes on every $A_k$, and the annulus sum is empty.\\
\noindent For small-ball atoms, $r_B < 1$ and $a$ has vanishing moments up to order $N_p$: Fix a Hölder exponent $\delta$ with:
\[
n_p - N_p \;<\; \delta \;\le\; \min(r - N_p,\, 1),
\]
which is possible since $N_p = \lfloor n_p \rfloor \le n_p < r$ and $n_p - N_p < 1$. Then: $\psi^e \in C^{N_p, \delta}$ for every $e$. For $x \in A_k$, let:
\[
\Lambda_k(x) := \{\lambda \in \Lambda^0 : x \in m Q(\lambda),\ m Q(\lambda) \cap B \ne \emptyset\}
\]
be the set of indices contributing to $S_{\mathrm{Loc}}^{\psi}(a)(x)$. For $\lambda \in \Lambda_k(x)$, the cube $m Q(\lambda)$ joins $x$ to $B$, so $m\sqrt{n}\, 2^{-j} \ge \abs{x - x_0} - r_B \ge (C_0 2^k - 1) r_B \ge (C_0 - 1)\, 2^k r_B$, i.e.\ the scales are confined to the window:
\begin{equation}
\label{eq:scale-window}
1 \;\ge\; 2^{-j} \;\ge\; c\, r_B\, 2^k,
  \qquad c := \frac{C_0 - 1}{m \sqrt{n}} ,
\end{equation}
with at most $C(m, n)$ indices per scale and type. For the coefficients: expanding $\psi_\lambda$ at $x_0$ to order $N_p$, we get:
\[
  \psi_\lambda(y) = \sum_{\abs{\nu} \le N_p}
     \frac{(y - x_0)^\nu}{\nu!}\,
     \partial^\nu \psi_\lambda(x_0) + R_\lambda(y),
  \qquad
  \abs{R_\lambda(y)} \le C(\psi, \delta)\, 2^{nj/2}\,
     2^{j(N_p + \delta)}\, \abs{y - x_0}^{N_p + \delta},
\]
the remainder bound from the $C^{N_p, \delta}$ regularity of the rescaled basis. The moments of $a$ annihilate the polynomial part, so, using $\norm{a}_1 \le \abs{B}^{1 - 1/p}$ from~\eqref{eq:atom-l1}, and $y \in B$:
\[
  \abs{\pair{a}{\psi_\lambda}}
  = \abs[\Big]{\int_B a(y)\, R_\lambda(y)\, dy}
  \;\lesssim\; 2^{j(N_p + \delta + n/2)}\,
     r_B^{\,N_p + \delta}\, \abs{B}^{1 - 1/p}
  \;\lesssim\; 2^{j(N_p + \delta + n/2)}\,
     r_B^{\,N_p + \delta - n(1/p - 1)} .
\]
By the scale window~\eqref{eq:scale-window}, $2^{j(N_p + \delta + n/2)} \le (c\, r_B 2^k)^{-(N_p + \delta + n/2)}$, hence, uniformly over $\lambda \in \Lambda_k(x)$:
\[
  \abs{\pair{a}{\psi_\lambda}}
  \;\lesssim\; 2^{-k(N_p + \delta + n/2)}\,
     r_B^{\,-n/2 - n(1/p - 1)} .
\]
On the other hand, with $\abs{\psi_\lambda(x)} \lesssim 2^{nj/2}$ and $C(m, n)$ indices per scale in the window:
\[
  \biggl( \sum_{\lambda \in \Lambda_k(x)}
     \abs{\psi_\lambda(x)}^2 \biggr)^{1/2}
  \;\le\; \sum_{\lambda \in \Lambda_k(x)}
     \abs{\psi_\lambda(x)}
  \;\lesssim\; \sum_{j :\, 2^{-j} \ge c r_B 2^k} 2^{nj/2}
  \;\lesssim\; 2^{-nk/2}\, r_B^{-n/2},
\]
the geometric sum dominated by its largest term. Combining the two bounds:
\[
  S_{\mathrm{Loc}}^{\psi}(a)(x)^p \;\lesssim\; 2^{-k p (N_p + \delta + n)}\,
     r_B^{-n},
  \qquad x \in A_k ,
\]
and, with $\abs{A_k} \le \omega_n (2^{k+1} C_0 r_B)^n$:
\[
  \int_{A_k} S_{\mathrm{Loc}}^{\psi}(a)^p
  \;\lesssim\; 2^{k n}\, 2^{-k p(N_p + \delta + n)}
  \;=\; 2^{-k p (N_p + \delta - n_p)} .
\]
Since $\delta > n_p - N_p$, the exponent is positive and the sum over $k$ converges. The total bound is $\int_{\mathbb{R}^n} S_{\mathrm{Loc}}^{\psi}(a)^p \le C(n, p, m, \text{wavelet basis})$, as claimed.
\end{proof}

\begin{lemma}
\label{lem:lowfreq-atoms} Let $0 < p \le 1$ and let $\varphi \in \mathcal S(\mathbb{R}^n)$ with $\supp(\hat\varphi) \subseteq \overline{B(0, R)}$. Let $f \in \mathcal{S}'(\mathbb{R}^n)$ be such that $u := \varphi * f \in L^p(\mathbb{R}^n)$. For $k \in \mathbb{Z}^n$, let $C_k = k + [-\tfrac12, \tfrac12)^n$ (translate of the unit cube), $B_k = B(k, \max(1, \sqrt{n}))$ (a ball of radius at least $1$ containing $\overline{C_k}$), and set:
\[
  \mu_k \;:=\; \abs{B_k}^{1/p}\, \norm{u}_{L^\infty(C_k)},
  \qquad
  A_k \;:=\; \mu_k^{-1}\, u\, \mathbf{1}_{C_k}
  \quad (\text{when } \mu_k \ne 0).
\]
Then each $A_k$ is an $L^\infty$ $h^{p}$-atom associated with the ball $B_k$ of radius $\ge 1$ (in particular, no moment conditions are required), the decomposition $u = \sum_{k \in \mathbb{Z}^n} \mu_k A_k$ holds pointwise and in $\mathcal{S}'(\mathbb{R}^n)$ (the terms with $\mu_k = 0$ being omitted), and:
\[
  \sum_{k \in \mathbb{Z}^n} \abs{\mu_k}^p
  \;\le\; C(n, p, R)\, \norm{\varphi * f}_p^p .
\]
Hence $\varphi * f \in h^p(\mathbb{R}^n)$ with $\norm{\varphi * f}_{h^{p}, \mathrm{at}} \le C(n, p, R)\, \norm{\varphi * f}_p$.
\end{lemma}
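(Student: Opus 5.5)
The atom property is a matter of unwinding definitions, and the real content is the coefficient bound. For the atoms: $A_k$ is supported in $C_k \subseteq B_k$, and
\[
\norm{A_k}_\infty = \mu_k^{-1}\norm{u}_{L^\infty(C_k)} = \abs{B_k}^{-1/p}.
\]
This is exactly the $L^\infty$ size condition on a ball of radius $\max(1,\sqrt n)\ge 1$, where no moments are required. We need $u$ locally bounded for $\mu_k<\infty$. Since $\hat u = \hat\varphi\,\hat f$ has compact support, $u$ is a smooth function of polynomial growth (Paley--Wiener--Schwartz). The bound below then shows each $\mu_k$ is finite.

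The pointwise decomposition is immediate because the cubes $C_k$ tile $\mathbb{R}^n$, so $u=\sum_k u\mathbf 1_{C_k}$ with exactly one nonzero term at each point. Convergence in $\mathcal S'$ then follows from the coefficient bound. Each $\mu_kA_k$ is bounded in $L^1$ by $\abs{B_k}\abs{B_k}^{-1/p}\mu_k \lesssim \mu_k$, and $\sum_k\mu_k\le(\sum_k\mu_k^p)^{1/p}<\infty$. Dominated convergence against Schwartz functions therefore applies, and the limit agrees with $u$ as a function. The final conclusion $\varphi*f\in h^p$ with the stated atomic quasi-norm bound is then the definition of $\norm{\cdot}_{h^p,\mathrm{at}}$, taking the $p$-th root.

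The main step is
\[
\sum_k \norm{u}_{L^\infty(C_k)}^p \lesssim \norm{u}_p^p,
\]
a Plancherel--Pólya--Nikol'skij type inequality for functions with spectrum in $\overline{B(0,R)}$. I would prove it via the Peetre maximal function. For fixed $N>n/p$, set
\[
u^*(x) := \sup_{y}\frac{\abs{u(x-y)}}{(1+R\abs y)^{N}}.
\]
The standard estimate for band-limited functions gives $u^*(x)\lesssim \bigl(M(\abs{u}^{r})(x)\bigr)^{1/r}$ for any $r$ with $n/N<r<p$, with constants depending on $(n,p,R,N)$; see~\cite[Section~1.3]{triebel1983}. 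Taking $r<p$ makes $M$ bounded on $L^{p/r}$, so $\norm{u^*}_p\lesssim\norm{u}_p$.

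Pointwise, for every $x\in C_k$ we have $\norm{u}_{L^\infty(C_k)}\le (1+R\sqrt n)^N u^*(x)$, since $\abs{x-y}\le\sqrt n$ for $y\in C_k$. Raising to the $p$-th power and integrating over $C_k$, which has measure $1$, gives
\[
\norm{u}_{L^\infty(C_k)}^p\lesssim\int_{C_k}(u^*)^p.
\]
Summing over $k$ yields $\sum_k\mu_k^p\le C(n,p,R)\norm{u}_p^p$.

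The obstacle I expect is justifying the Peetre inequality a priori when $u$ is only known to be in $L^p$ and of polynomial growth. The usual argument assumes $u^*(x)<\infty$, which holds once $N$ exceeds the growth order of $u$. I would handle this by taking $N$ large, above both $n/p$ and the polynomial growth order, which is allowed since only $N>n/p$ matters for the bound. Alternatively, I would cite the Nikol'skij inequality already used in Remark~\ref{rem:pairing-compat}.
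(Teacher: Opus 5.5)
Your proof is correct and follows the paper's outline. It uses the same atoms and the same tiling argument, and it reduces the coefficient bound to the Plancherel--P\'olya inequality~\eqref{eq:pp}, $\sum_k \sup_{C_k}\abs{u}^p \lesssim \norm{u}_p^p$.

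The differences are in execution.
\begin{enumerate}[(i)]
\item The paper simply cites~\eqref{eq:pp} from Triebel. You reprove it by the standard route: the Peetre maximal function $u^*_N$, the bound $u^*_N \lesssim (M\abs{u}^r)^{1/r}$, boundedness of $M$ on $L^{p/r}$, and the pointwise bound $\sup_{C_k}\abs{u} \le (1+R\sqrt n)^N u^*_N(x)$ for $x \in C_k$. This makes the step self-contained modulo Peetre's inequality.
\item For the $\mathcal S'$-convergence, the paper dominates by $\abs{u\varphi'}$ using only the polynomial growth of $u$, independently of the coefficient bound. You instead use the coefficient bound to get absolute $L^1$-convergence, which incidentally shows $u \in L^1$. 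This is equally valid and involves no circularity.
\end{enumerate}

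One point in your obstacle paragraph needs tightening. Both the Peetre constant and the factor $(1+R\sqrt n)^N$ depend on $N$. So choosing $N$ above the growth order of $u$ yields a constant that depends on $f$, not the required $C(n,p,R)$; your remark that only $N > n/p$ matters is not right as stated. The repair is a bootstrap:
\begin{enumerate}[(a)]
\item A first pass with that large $N$ gives $\sum_k \sup_{C_k}\abs{u}^p < \infty$, hence $u \in L^\infty$.
\item Then $u^*_{N_0}(x) \le \norm{u}_\infty < \infty$ for a fixed $N_0 > n/p$ and fixed $r \in (n/N_0, p)$.
\item Rerunning the argument with $N_0$ gives a constant depending only on $(n,p,R)$.
\end{enumerate}
Your Nikol'skij alternative achieves the same thing, since it gives $u \in L^\infty$ directly.
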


\begin{proof}
Since $\hat{u} = \hat\varphi\, \hat{f}$ is a tempered distribution supported in $\overline{B(0, R)}$, the Paley-Wiener-Schwartz Theorem makes $u$ a smooth function of polynomial growth. In particular, $u$ is continuous, so $\norm{u}_{L^\infty(C_k)} = \sup_{C_k} \abs{u}$, and $u \varphi' \in L^1(\mathbb{R}^n)$ for every $\varphi' \in \mathcal S(\mathbb{R}^n)$.

The atoms are immediate: $\supp(A_k) \subseteq \overline{C_k} \subseteq B_k$, $\norm{A_k}_\infty = \mu_k^{-1} \norm{u}_{L^\infty(C_k)} = \abs{B_k}^{-1/p}$, and $B_k$ has radius $\max(1, \sqrt{n}) \ge 1$, so no moment condition applies.

For the convergence, the cubes $\{C_k\}$ partition $\mathbb{R}^n$, so $\sum_k \mu_k A_k = u \sum_k \mathbf{1}_{C_k} = u$ pointwise. For the convergence in $\mathcal{S}'(\mathbb{R}^n)$: for $\varphi' \in \mathcal S(\mathbb{R}^n)$ and finite $G \subseteq \mathbb{Z}^n$, we have:
\[
  \pair{\sum_{k \in G} \mu_k A_k}{\varphi'}
  \;=\; \int_{\bigcup_{k \in G} C_k} u\, \varphi'\, dx
  \;\xrightarrow[G \uparrow \mathbb{Z}^n]{}\;
  \int_{\mathbb{R}^n} u\, \varphi'\, dx
  \;=\; \pair{u}{\varphi'},
\]
by the Dominated Convergence Theorem, the dominating function $\abs{u \varphi'}$ being integrable by the polynomial growth of $u$.

For the coefficient bound, we invoke the Plancherel-Pólya Inequality~\cite[Sections~1.3.2--1.3.3]{triebel1983}: for every $0 < p \le \infty$, there is $C_1 = C_1(n, p, R)$ such that every $v \in L^p(\mathbb{R}^n) \cap \mathcal{S}'(\mathbb{R}^n)$ with $\supp(\hat{v}) \subseteq \overline{B(0, R)}$ satisfies:
\begin{equation}
\label{eq:pp}
  \sum_{k \in \mathbb{Z}^n}\, \sup_{x \in C_k} \abs{v(x)}^p
  \;\le\; C_1\, \norm{v}_p^p .
\end{equation}
Applying~\eqref{eq:pp} to $v = u$, and noting that $\abs{B_k} = \omega_n \max(1, \sqrt{n})^n$ for every $k$:
\[
  \sum_{k} \abs{\mu_k}^p
  \;=\; \omega_n \max(1, \sqrt{n})^n
     \sum_{k} \norm{u}_{L^\infty(C_k)}^p
  \;\le\; \omega_n \max(1, \sqrt{n})^n\, C_1\,
     \norm{u}_p^p ,
\]
which is the claim with $C(n, p, R) = \omega_n \max(1, \sqrt{n})^n\, C_1$.

To conclude, by the uniform atomic bound~\eqref{eq:atom-hp-bound} and $p$-sub-additivity, the series $\sum_k \mu_k A_k$ converges unconditionally in $h^{p}$, and, its terms being $h^{p}$-atoms with $\ell^p$ coefficients, $\norm[\big]{\sum_k \mu_k A_k}_{h^{p}, \mathrm{at}}^p \le \sum_k \abs{\mu_k}^p$ by definition of the atomic quasi-norm. Since $h^{p}$ embeds continuously in $\mathcal{S}'(\mathbb{R}^n)$ and the series converges to $u$ in $\mathcal{S}'(\mathbb{R}^n)$, its $h^{p}$-sum is $u$. Hence $u \in h^{p}$ with $\norm{u}_{h^{p}, \mathrm{at}} \le C(n, p, R)^{1/p}\, \norm{u}_p$.
\end{proof}

\begin{lemma}
\label{lem:father-coeff-lp} Let $0 < p \le 1$ and let $f \in \mathcal{S}'(\mathbb{R}^n)$ satisfy condition \textup{(G)}: $f - \varphi * f \in H^p(\mathbb{R}^n)$ and $\varphi * f \in L^p(\mathbb{R}^n)$ for every admissible Schwartz $\varphi$. Let $\varphi_0$ be the fixed band-limited Schwartz function exhibited in~\eqref{eq:admiss-p-s}, with $\supp(\hat\varphi_0) \subseteq \overline{B(0, R)}$. Then, the father wavelet coefficients $\beta_k = \pair{f}{\phi_k}$, interpreted via the pairing convention \textup{(P3)} of Subsection~\ref{ssec:wavelet}, satisfy:
\[
  \sum_{k \in \mathbb{Z}^n} \abs{\beta_k}^p
  \;\le\; C \Bigl( \norm{f - \varphi_0 * f}_{H^{p}}^p
     + \norm{\varphi_0 * f}_p^p \Bigr)
  \;<\; \infty,
\]
with $C = C(n, p, m, R, \text{wavelet basis})$.
\end{lemma}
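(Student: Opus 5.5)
By convention \textup{(P3)}, with $g := f - \varphi_0 * f \in H^p$ and $u := \varphi_0 * f \in L^p$, we have $\beta_k = \pair{g}{\phi_k}_* + \int u\,\overline{\phi_k}$. By the $p$-triangle inequality,
\[
\sum_k \abs{\beta_k}^p \le \sum_k \abs{\pair{g}{\phi_k}_*}^p + \sum_k \abs[\Big]{\int u\, \overline{\phi_k}}^p .
\]
The plan is to bound the two families separately, each through an atomic decomposition whose terms pair with $\phi_k$ in a controlled and localized way.

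\textbf{The $u$-part.} Apply Lemma~\ref{lem:lowfreq-atoms} with $\varphi = \varphi_0$. Since $u$ is continuous, the integral $\int u\,\overline{\phi_k}$ splits as $\sum_{k'} \int_{C_{k'}} u\,\overline{\phi_k}$. Because $\supp \phi_k \subseteq m Q_k$, only the $k'$ with $\abs{k'-k} \lesssim m\sqrt{n}$ contribute, and there are at most $C(m,n)$ of them. Each such term is bounded by $\norm{\phi}_\infty \norm{u}_{L^\infty(C_{k'})} \lesssim \mu_{k'}$. Using $(\sum_{i \le C} t_i)^p \le \sum_i t_i^p$ and the finite overlap,
\[
\sum_k \abs[\Big]{\int u\,\overline{\phi_k}}^p \lesssim \sum_{k'} \mu_{k'}^p \le C(n,p,R)\, \norm{u}_p^p .
\]
This is essentially routine.

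\textbf{The $g$-part.} Take an $L^2$ $H^p$-atomic decomposition $g = \sum_i \lambda_i a_i$ with $\sum_i \abs{\lambda_i}^p \lesssim \norm{g}_{H^p}^p$. Since $\phi_k \in \dot\Lambda_{n_p}$, the duality pairing is continuous, so $\pair{g}{\phi_k}_* = \sum_i \lambda_i \int a_i \overline{\phi_k}$. The atoms reduce to continuous functions via the compatibility of Remark~\ref{rem:pairing-compat}; this is the step to state carefully. Then
\[
\sum_k \abs{\pair{g}{\phi_k}_*}^p \le \sum_i \abs{\lambda_i}^p \sum_k \abs[\Big]{\int a_i \overline{\phi_k}}^p ,
\]
and everything reduces to the uniform bound $\sum_k \abs{\int a\,\overline{\phi_k}}^p \le C$ for every $L^2$ $H^p$-atom $a$ on a ball $B = B(x_0, r)$. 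This is the main obstacle, and I would split by radius.

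If $r \ge 1$, the number of $k$ with $m Q_k \cap B \ne \emptyset$ is $\lesssim r^n$. Each pairing is at most $\norm{a}_1 \norm{\phi}_\infty \lesssim r^{-n_p}$ by \eqref{eq:atom-l1}. Hence the sum is $\lesssim r^n\, r^{-n_p p} = r^{n - n(1-p)} = r^{np}$, which is unbounded. So for large balls the crude count fails, and the moment conditions must be used. Expanding $\phi_k$ to order $N_p'$ at $x_0$ with the Hölder remainder \eqref{eq:taylor-holder} does not help here, since $r \ge 1$ makes $\abs{y - x_0}^{N_p+\delta}$ large.

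The fix I would try first is to avoid pairing $g$ directly and instead use a quasi-orthogonality argument. Write $\pair{g}{\phi_k}_* = \pair{g}{P_0\text{-projection}}$ and use the Hardy-space square-function equivalence: the coefficients $\pair{g}{\phi_k}$ together with $\pair{g}{\psi_\lambda}$, $\lambda \in \Lambda \setminus \Lambda^0$, come from the homogeneous expansion. Meyer's identity $\sum_k \abs{\beta_k}^2 = \sum_{\lambda \in \Lambda\setminus\Lambda^0} \abs{\alpha(\lambda)}^2$ suggests compressing. More robustly, I would fall back on the $H^p = \dot F^0_{p,2}$ characterization through $T_p$ (recalled in the Preliminaries), giving $\norm{\alpha_g}_{T_p} \lesssim \norm{g}_{H^p}$. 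The father coefficients $\pair{g}{\phi_k}$ are then a fixed synthesis of the large-scale coefficients $\alpha_g(Q)$, $\ell(Q) > 1$, via $V_0 = \bigoplus_{j<0} W_j$.

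Concretely, $\pair{g}{\phi_k} = \sum_{\lambda \in \Lambda \setminus \Lambda^0} \alpha_g(\lambda) \pair{\psi_\lambda}{\phi_k}$. Here $\abs{\pair{\psi_\lambda}{\phi_k}} \lesssim 2^{nj/2}$, nonzero only for $k$ near $Q(\lambda)$, and a smoothness gain may be available. I would bound $\sum_k \abs{\cdot}^p$ by decomposing $\alpha_g$ into $T_p$-atoms, as in Theorem~\ref{thm:tp-atomic-restate}. For a $T_p$-atom with base $R$, $\ell(R) > 1$, the resulting father sequence is supported on $\lesssim \abs{R}$ cells and has $\ell^2$ mass $\lesssim \abs{R}^{1-2/p}$. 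Hölder then gives an $\ell^p$ mass $\lesssim \abs{R}^{1-p/2} \abs{R}^{p/2 - 1} = 1$. For small-base atoms, the pairings $\pair{\psi_\lambda}{\phi_k}$ vanish by orthogonality.

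This uniform atom bound, the analogue of Proposition~\ref{prop:tp-atom-bound} on the father layer, closes the estimate. Combining the two parts yields the claimed inequality.
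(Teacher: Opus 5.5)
Your $u$-part matches the paper's: you cover $mQ_k$ by boundedly many unit cells with bounded overlap, then apply Plancherel--P\'olya, here through the coefficients $\mu_{k'}$ of Lemma~\ref{lem:lowfreq-atoms}. For the $g$-part your final $T_p$ route is valid but takes a genuinely different path, and it comes from a misdiagnosis: the crude count fails on large balls because you used the $L^1$ size of the atom, not because moments are needed.

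The paper stays with the $L^2$ $H^p$-atoms throughout.
\begin{itemize}
\item For $r_a \ge 1$ it combines $\#K \lesssim \abs{B}$ with Bessel's inequality for the orthonormal family $\{\phi_k\}$, namely $\sum_k \abs{\pair{a}{\phi_k}}^2 \le \norm{a}_2^2 \le \abs{B}^{1-2/p}$, and then H\"older at exponent $2/p$. This gives $\sum_k \abs{\pair{a}{\phi_k}}^p \lesssim \abs{B}^{1-p/2}\abs{B}^{p/2-1} = C$, with no moments used at all.
\item For $r_a < 1$ it has $\#K \lesssim 1$ and bounds each pairing through the order-$N_p$ Taylor expansion of $\phi_k \in C^{N_p,\delta}$, obtaining $r_a^{N_p+\delta-n_p} \le 1$.
\end{itemize}

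Your detour performs exactly this Bessel--H\"older count one level up, on $T_p$-atoms, via the unitary change of basis between $\{\psi_\lambda\}_{\lambda \notin \Lambda^0}$ and $\{\phi_k\}$ inside $V_0$. What it buys: the small-base pieces vanish by orthogonality instead of by moments, and the estimate is transparently linked to the compression $\Theta$.

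What it costs is several imported inputs:
\begin{itemize}
\item the analysis half of the $H^p$ wavelet characterization, $\norm{\alpha_g^e}_{T_p} \lesssim \norm{g}_{H^p}$, for this Daubechies basis;
\item the $H^p$-convergence of the homogeneous expansion, which you need in order to write $\pair{g}{\phi_k}_* = \sum_{\lambda \notin \Lambda^0} \alpha_g(\lambda)\, \pair{\psi_\lambda}{\phi_k}$;
\item the Frazier--Jawerth atomic decomposition of $T_p$;
\item an interchange of sums that you still have to justify. One way is the bound $\abs{\alpha(\lambda)}\, \abs{\pair{\psi_\lambda}{\phi_k}} \lesssim \abs{Q(\lambda)}^{-1/p}$ on each $T_p$-atom, together with the fact that only boundedly many large cubes per generation meet $mQ_k$.
\end{itemize}
The paper's argument, by contrast, is self-contained and elementary.
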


\begin{proof}
Since $\varphi_0$ is an admissible Schwartz function, condition \textup{(G)} gives, by assumption:
\[
  g := f - \varphi_0 * f \in H^p(\mathbb{R}^n),
  \qquad
  u := \varphi_0 * f \in L^p(\mathbb{R}^n),
\]
and $u \in C^\infty(\mathbb{R}^n)$ with polynomially bounded derivatives, as the convolution of a tempered distribution with a Schwartz function, and $\hat{u} = \hat\varphi_0 \hat{f}$ is supported in $\overline{B(0, R)}$. Fix an atomic decomposition~\cite{latter1978}, \cite[Chapter~III]{garcia-cuerva-rubio1985}:
\begin{equation}
\label{eq:atomic-dec}
  g = \sum_i \mu_i\, a_i
  \quad (\text{convergence in } H^{p}),
  \qquad
  \sum_i \abs{\mu_i}^p \le C_{\mathrm{dec}}\,
     \norm{g}_{H^{p}}^p ,
\end{equation}
into $L^2$ $H^{p}$-atoms: each $a_i$ is supported in a ball $B_i = B(x_i, r_i)$, satisfies $\norm{a_i}_2 \le \abs{B_i}^{1/2 - 1/p}$, and $\int y^\nu a_i(y)\, dy = 0$, for all $\abs{\nu} \le N_p$.

Set:
\[
\delta := \min(r - N_p,\, 1) \in (0, 1],
  \qquad \text{so that} \quad N_p + \delta > n_p ,
\]
the first assertion because $r > n_p \ge N_p$, and the second because $N_p + \delta$ equals either $r > n_p$ or $N_p + 1 > n_p$. \begin{equation}
\label{eq:phik-reg}
  \phi_k \in C^{N_p, \delta}_c(\mathbb{R}^n),
  \qquad
  \norm{\phi_k}_{C^{N_p, \delta}} =: C_\phi < \infty
  \quad \text{uniformly in } k .
\end{equation}
Indeed, writing $\phi_k(x) = \prod_{i=1}^n \phi(x_i - k_i)$: for $\abs{\nu} \le N_p$ each component satisfies $\nu_i \le N_p < r$, so $\partial^\nu \phi_k(x) = \prod_i \phi^{(\nu_i)}(x_i - k_i)$ exists and is continuous, and for $\abs{\nu} = N_p$, each factor $\phi^{(\nu_i)}$ belongs to $C^{\,r - \nu_i}(\mathbb{R}) \subseteq C^{0, \delta}(\mathbb{R})$, as $r - \nu_i \ge r - N_p \ge \delta$, and a finite product of bounded, compactly supported, $\delta$-Hölder functions is $\delta$-Hölder. The uniformity in $k$ is translation invariance.

Since $N_p + \delta > n_p$ and $\phi_k$ has compact support, $\phi_k$ belongs to the dual space of $H^{p}$: for $0 < p < 1$, $(H^{p})^* \cong \dot\Lambda_{n_p}(\mathbb{R}^n)$ and $C^{N_p, \delta}_c \hookrightarrow \dot\Lambda_{n_p}$, while for $p = 1$, $(\mathrm{H}^1)^* = \mathrm{BMO}$ and $L^\infty_c \subseteq \mathrm{BMO}$~\cite{feffermanstein1972, taibleson1980}. Hence, the functional $h \mapsto \pair{h}{\phi_k}$, which on $L^2 \cap H^{p}$ is the integral $\int h\, \overline{\phi_k}$, extends continuously to $H^{p}$, with norm bounded uniformly in $k$ by~\eqref{eq:phik-reg}. Following convention \textup{(P3)}, define:
\[
  \beta_k^{(1)} := \pair{g}{\phi_k}
  \quad (\text{duality pairing}),
  \qquad
  \beta_k^{(2)} := \int_{\mathbb{R}^n} u(x)\, \overline{\phi_k(x)}\, dx
  \quad (\text{absolutely convergent}),
\]
the second because $u$ is continuous, hence bounded on the compact set $\supp(\phi_k) \subseteq m Q_k$. Then $\beta_k = \beta_k^{(1)} + \beta_k^{(2)}$, by the very definition~\eqref{eq:G-pairing} of convention \textup{(P3)}, and by $p$-sub-additivity:
\begin{equation}
\label{eq:split}
  \abs{\beta_k}^p
  \;\le\; \abs{\beta_k^{(1)}}^p + \abs{\beta_k^{(2)}}^p .
\end{equation}

There is $C_2 = C_2(n, p, m, \text{wavelet basis})$ such that for every $L^2$ $H^{p}$-atom $a$ on a ball $B = B(x_a, r_a)$:
\begin{equation}
\label{eq:atom-est}
  \sum_{k \in \mathbb{Z}^n} \abs{\pair{a}{\phi_k}}^p \;\le\; C_2,
  \qquad \text{and} \qquad
  \sup_{k \in \mathbb{Z}^n} \abs{\pair{a}{\phi_k}} \;\le\; C_2 ,
\end{equation}
where, $a$ being an element of $L^2 \cap H^{p}$, the pairing $\pair{a}{\phi_k} = \int a\, \overline{\phi_k}$ is an honest (absolutely convergent) integral, on which the duality functional $\pair{\cdot}{\phi_k}_*$ agrees.\\
Let $K := \{k \in \mathbb{Z}^n : m Q_k \cap B \ne \emptyset\}$. Since $\supp(\phi_k) \subseteq m Q_k$, we have $\pair{a}{\phi_k} = 0$ for $k \notin K$. As $m Q_k \subseteq B(k, (m+1)\sqrt{n})$, every $k \in K$ satisfies $\abs{k - x_a} \le r_a + (m+1)\sqrt{n}$, so:
\begin{equation}
\label{eq:counting}
  \# K \;\le\; C(m, n)\, (1 + r_a)^n .
\end{equation}

By~\eqref{eq:counting}, $\# K \le C(m,n)\, 2^n r_a^n \le C'(m,n)\, \abs{B}$. Since $\{\phi_k\}_{k \in \mathbb{Z}^n}$ is an orthonormal family in $L^2$, Bessel's inequality gives $\sum_k \abs{\pair{a}{\phi_k}}^2 \le \norm{a}_2^2 \le \abs{B}^{1 - 2/p}$. By Hölder's Inequality on $K$ with exponents $2/p$ and $(1 - p/2)^{-1}$:
\[
  \sum_{k \in K} \abs{\pair{a}{\phi_k}}^p
  \;\le\; (\# K)^{1 - p/2}
     \Bigl( \sum_{k} \abs{\pair{a}{\phi_k}}^2 \Bigr)^{p/2}
  \;\le\; C'(m,n)^{1 - p/2}\, \abs{B}^{1 - p/2}\,
     \abs{B}^{p/2 - 1}
  \;=\; C'(m,n)^{1 - p/2} .
\]
Individually, $\abs{\pair{a}{\phi_k}} \le \norm{a}_2 \norm{\phi_k}_2 \le \abs{B}^{1/2 - 1/p} \le \omega_n^{1/2 - 1/p}$, since $\abs{B} \ge \omega_n$ and $1/2 - 1/p \le 0$.

By~\eqref{eq:counting}, $\# K \le C(m,n)\, 2^n$. Let $P_k$ be the Taylor polynomial of degree $N_p$ of $\phi_k$ at $x_a$. By~\eqref{eq:phik-reg} and the Taylor remainder in Hölder form, for $x \in B$:
\[
  \abs{\phi_k(x) - P_k(x)}
  \;\le\; C_{n, N_p}\, C_\phi\,
     \abs{x - x_a}^{N_p + \delta}
  \;\le\; C_{n, N_p}\, C_\phi\, r_a^{\,N_p + \delta} .
\]
The atom's moments give $\int a\, \overline{P_k} = 0$, so, using $\norm{a}_1 \le \abs{B}^{1 - 1/p}$ from~\eqref{eq:atom-l1}, we have:
\begin{align*}
  \abs{\pair{a}{\phi_k}}
  = \abs[\Big]{\int_B a(x)\,
     \overline{\bigl( \phi_k(x) - P_k(x) \bigr)}\, dx}
  &\;\le\; C_{n, N_p}\, C_\phi\, r_a^{\,N_p + \delta}\,
     \abs{B}^{1 - 1/p} \\
  &\;=\; C_{n, N_p}\, C_\phi\, \omega_n^{1 - 1/p}\,
     r_a^{\,N_p + \delta - n_p}
  \;\le\; C,
\end{align*}
since $N_p + \delta > n_p$ and $0 < r_a < 1$. Summing over the at most $C(m,n)\, 2^n$ indices of $K$ proves~\eqref{eq:atom-est} in this case as well.

Since partial sums of~\eqref{eq:atomic-dec} converge to $g$ in $H^{p}$ and $\pair{\cdot}{\phi_k}_*$ is continuous on $H^{p}$:
\[
  \beta_k^{(1)} = \pair{g}{\phi_k}_*
  = \sum_i \mu_i\, \pair{a_i}{\phi_k} .
\]
By $p$-sub-additivity, Tonelli's Theorem, and~\eqref{eq:atom-est}, we have:
\begin{equation}
\label{eq:term1}
  \sum_{k \in \mathbb{Z}^n} \abs{\beta_k^{(1)}}^p
  \;\le\; \sum_{k} \sum_i \abs{\mu_i}^p
     \abs{\pair{a_i}{\phi_k}}^p
  \;=\; \sum_i \abs{\mu_i}^p
     \sum_k \abs{\pair{a_i}{\phi_k}}^p
  \;\le\; C_2\, C_{\mathrm{dec}}\, \norm{g}_{H^{p}}^p .
\end{equation}

For the $L^p$ term, since $\supp(\phi_k) \subseteq m Q_k$ and $\norm{\phi_k}_\infty = \norm{\phi}_{L^\infty(\mathbb{R})}^n$:
\[
  \abs{\beta_k^{(2)}}
  \;\le\; \norm{\phi}_{L^\infty(\mathbb{R})}^n
     \int_{m Q_k} \abs{u(x)}\, dx
  \;\le\; \norm{\phi}_{L^\infty(\mathbb{R})}^n\, m^n\,
     \sup_{m Q_k} \abs{u} .
\]
The cube $m Q_k$ is covered by the translates of the unit cube $C_{k'}$, $k' \in N(k) := \{k' : C_{k'} \cap m Q_k \ne \emptyset\}$, with $\# N(k) \le C(m, n)$ and each $k'$ belonging to $N(k)$ for at most $C(m, n)$ values of $k$. Hence, $\sup_{m Q_k} \abs{u}^p \le \sum_{k' \in N(k)} \sup_{C_{k'}} \abs{u}^p$, and summing over $k$:
\begin{equation}
\label{eq:term2}
  \sum_{k \in \mathbb{Z}^n} \abs{\beta_k^{(2)}}^p
  \;\le\; \norm{\phi}_{L^\infty(\mathbb{R})}^{np}\, m^{np}\,
     C(m, n)^2 \sum_{k' \in \mathbb{Z}^n}
     \sup_{C_{k'}} \abs{u}^p
  \;\le\; C(n, p, m, R, \text{wavelet basis})\,
     \norm{u}_p^p ,
\end{equation}
the last step by the Plancherel-Pólya inequality~\eqref{eq:pp}, applicable since $\hat{u}$ is supported in $\overline{B(0, R)}$ and $u \in L^p(\mathbb{R}^n) \cap \mathcal{S}'(\mathbb{R}^n)$. This application of~\eqref{eq:pp} is the step requiring the band-limited choice of $\varphi_0$: for general $u \in L^p(\mathbb{R}^n)$ with $p < 1$, the left-hand side of~\eqref{eq:term2} need not be finite.

Combining~\eqref{eq:split}, \eqref{eq:term1}, and~\eqref{eq:term2}:
\[
  \sum_{k \in \mathbb{Z}^n} \abs{\beta_k}^p
  \;\le\; C_2\, C_{\mathrm{dec}}\, \norm{g}_{H^{p}}^p
  + C\, \norm{u}_p^p
  \;<\; \infty,
\]
which is the assertion.
\end{proof}

\begin{lemma}
\label{lem:S0-sublinear} Let $0 < p \le 1 < q \le \infty$ and let $f = \sum_i \mu_i a_i$, with convergence in $\mathcal{S}'(\mathbb{R}^n)$, where each $a_i$ is an $L^q$ $h^{p}$-atom and $\sum_i \abs{\mu_i}^p < \infty$. Then, for every $\lambda \in \Lambda^0$, the series defining the wavelet coefficients of $f$ under convention \textup{(P2)} of Subsection~\ref{ssec:wavelet}:
\[
  \pair{f}{\psi_\lambda}
  \;=\; \sum_i \mu_i\, \pair{a_i}{\psi_\lambda},
\]
converges absolutely. Moreover, pointwise on $\mathbb{R}^n$:
\[
  S_{\mathrm{Loc}}^{\psi}(f)(x) \;\le\; \sum_i \abs{\mu_i}\, S_{\mathrm{Loc}}^{\psi}(a_i)(x),
\]
and hence, by $p$-sub-additivity and Tonelli's Theorem:
\[
  S_{\mathrm{Loc}}^{\psi}(f)(x)^p \;\le\; \sum_i \abs{\mu_i}^p\, S_{\mathrm{Loc}}^{\psi}(a_i)(x)^p,
  \qquad
  \int_{\mathbb{R}^n} S_{\mathrm{Loc}}^{\psi}(f)^p\, dx
  \;\le\; \sum_i \abs{\mu_i}^p
     \int_{\mathbb{R}^n} S_{\mathrm{Loc}}^{\psi}(a_i)^p\, dx .
\]
\end{lemma}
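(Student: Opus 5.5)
The argument has three steps: absolute convergence of each coefficient series, the pointwise bound on square functions, and the $L^p$ integration.

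\emph{Absolute convergence.} Fix $\lambda \in \Lambda^0$. I will show $\sup_i \abs{\pair{a_i}{\psi_\lambda}} \le C_\lambda < \infty$, uniformly over all $L^q$ $h^{p}$-atoms. Together with $\sum_i \abs{\mu_i} \le (\sum_i \abs{\mu_i}^p)^{1/p} < \infty$ (from $\ell^p \hookrightarrow \ell^1$), this gives absolute convergence.

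The uniform bound splits by atom size, as in convention~\ref{pair:atomic}.
\begin{itemize}
\item For a large atom, Hölder's inequality and~\eqref{eq:atom-l1} give $\abs{\pair{a}{\psi_\lambda}} \le \norm{a}_1 \norm{\psi_\lambda}_\infty \le \abs{B}^{1-1/p} \norm{\psi_\lambda}_\infty$. Since $\abs{B}$ is bounded below and $1 - 1/p \le 0$, the factor $\abs{B}^{1-1/p}$ is bounded.
\item For a small atom on $B = B(x_0, r_B)$, I subtract the degree-$N_p$ Taylor polynomial of $\psi_\lambda$ at $x_0$, which the moments of $a$ annihilate. The Hölder remainder~\eqref{eq:taylor-holder} at an exponent $\delta$ with $N_p + \delta > n_p$ then gives $\abs{\pair{a}{\psi_\lambda}} \lesssim_\lambda r_B^{N_p+\delta}\abs{B}^{1-1/p} \lesssim r_B^{N_p+\delta-n_p} \le 1$. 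This is the same computation as in the proofs of Lemma~\ref{lem:annulus-estimate-hp} and Lemma~\ref{lem:father-coeff-lp}.
\end{itemize}
Finally, Remark~\ref{rem:pairing-compat} makes the resulting value $\pair{f}{\psi_\lambda}$ independent of the decomposition, so the displayed identity is simply the definition~\eqref{eq:atomic-pairing}.

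\emph{Pointwise bound.} Fix $x$ and a finite $F \subseteq \Lambda^0$. Write $v_i := (\pair{a_i}{\psi_\lambda}\psi_\lambda(x))_{\lambda\in F}$, a vector in the finite-dimensional space $\ell^2(F)$. By the first step, $(\pair{f}{\psi_\lambda}\psi_\lambda(x))_{\lambda\in F} = \sum_i \mu_i v_i$, with absolute convergence in each of the finitely many coordinates. Minkowski's inequality in $\ell^2(F)$ then gives
\[
\Bigl(\sum_{\lambda\in F}\abs{\pair{f}{\psi_\lambda}}^2\abs{\psi_\lambda(x)}^2\Bigr)^{1/2} \le \sum_i\abs{\mu_i}\norm{v_i}_{\ell^2(F)} \le \sum_i \abs{\mu_i}\, S_{\mathrm{Loc}}^{\psi}(a_i)(x).
\]
Taking the supremum over finite $F$ is legitimate by the monotone net remark following Lemma~\ref{lem:net-summability}, and yields the pointwise inequality, with both sides possibly infinite.

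\emph{The $L^p$ bound.} Since $p \le 1$, the inequality $(\sum_i t_i)^p \le \sum_i t_i^p$ for nonnegative series (via finite partial sums and a limit) gives the pointwise $p$-th power bound. Integrating in $x$ and exchanging sum and integral by Tonelli's Theorem, all terms being nonnegative and measurable, gives the integrated inequality.

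The only step needing care is the uniform coefficient bound for small atoms, and specifically the choice of $\delta$. I will use $\psi^e \in C^{N_p,\delta}$ with $n_p - N_p < \delta \le \min(r - N_p, 1)$, exactly as fixed in Lemma~\ref{lem:annulus-estimate-hp}. This is where $r > n_p$ enters.
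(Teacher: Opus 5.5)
Your proposal is correct and matches the paper's proof: it uses the same uniform bound $C_\lambda$ on $\abs{\pair{a}{\psi_\lambda}}$ over all atoms (Hölder for large atoms, the Taylor--Hölder remainder against the moments for small ones), followed by the $\ell^2$ triangle inequality, $p$-sub-additivity, and Tonelli. The only variation is the order of limits in the pointwise bound: you truncate in $\lambda$ (Minkowski in the finite-dimensional $\ell^2(F)$, then a monotone supremum over $F$), whereas the paper truncates in $i$ and passes to the limit by Fatou's Lemma on $\Lambda^0$, so your version does not need the a.e.\ finiteness of $S_{\mathrm{Loc}}^{\psi}(a_i)(x)$ that the paper draws from Lemma~\ref{lem:annulus-estimate-hp}.
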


\begin{proof}
Fix $\lambda \in \Lambda^0$. We claim:
\begin{equation}
\label{eq:unif-atom-pairing}
  \sup \bigl\{ \abs{\pair{a}{\psi_\lambda}} \;:\;
     a \text{ an } L^q \ h^{p}\text{-atom} \bigr\}
  \;=:\; C_\lambda \;<\; \infty .
\end{equation}
Every $L^q$ $h^{p}$-atom $a$ on a ball $B_a$ satisfies $\norm{a}_1 \le \abs{B_a}^{1 - 1/p}$ by~\eqref{eq:atom-l1}. If $a$ is a large-ball atom ($r_a \ge 1$), then $\abs{B_a} \ge \omega_n$ and $1 - 1/p \le 0$ give:
\[
  \abs{\pair{a}{\psi_\lambda}}
  \;\le\; \norm{\psi_\lambda}_\infty\, \norm{a}_1
  \;\le\; \omega_n^{1 - 1/p}\, \norm{\psi_\lambda}_\infty .
\]
If $a$ is a small-ball atom ($r_a < 1$), let $P$ be the Taylor polynomial of degree $N_p$ of $\psi_\lambda$ at the center $x_a$. The basis regularity $r > n_p$ makes $\psi_\lambda \in C^{N_p, \delta}_c(\mathbb{R}^n)$ with $\delta = \min(r, N_p + 1) - N_p$ (as in convention \textup{(P3)}), so the moment conditions of $a$ and the Hölder form of the Taylor remainder~\eqref{eq:taylor-holder} give:
\[
  \abs{\pair{a}{\psi_\lambda}}
  = \abs[\Big]{ \int_{B_a} a\,
     \overline{(\psi_\lambda - P)} }
  \;\le\; C_{n, N_p}\,
     \norm{\psi_\lambda}_{C^{N_p, \delta}}\,
     r_a^{\,N_p + \delta}\, \abs{B_a}^{1 - 1/p}
  \;\le\; C_{n, N_p}\, \omega_n^{1 - 1/p}\,
     \norm{\psi_\lambda}_{C^{N_p, \delta}} ,
\]
since $N_p + \delta > n_p$ and $r_a < 1$. This estimate proves~\eqref{eq:unif-atom-pairing}. Here $C_\lambda$ depends on $\lambda$ (through the rescaled norms of $\psi_\lambda$), which is harmless here. Since $\ell^p \hookrightarrow \ell^1$ for $p \le 1$, the coefficient series converges absolutely: $\sum_i \abs{\mu_i}\, \abs{\pair{a_i}{\psi_\lambda}} \le C_\lambda \bigl( \sum_i \abs{\mu_i}^p \bigr)^{1/p} < \infty$.

Now fix $x \in \mathbb{R}^n$ and $M \in \mathbb{N}$, and consider the vectors $v_i := \bigl( \pair{a_i}{\psi_\lambda}\, \psi_\lambda(x) \bigr)_{\lambda \in \Lambda^0} \in \ell^2(\Lambda^0)$. Each has $\norm{v_i}_{\ell^2} = S_{\mathrm{Loc}}^{\psi}(a_i)(x)$, finite for a.e.\ $x$ by Lemma~\ref{lem:annulus-estimate-hp}, hence, the atoms being countably many, all are finite for a.e.\ $x$ at once (and at the exceptional points the claimed inequality holds trivially, its right-hand side being infinite). By the triangle inequality in $\ell^2(\Lambda^0)$:
\[
  \Biggl( \sum_{\lambda \in \Lambda^0}
     \Bigl| \sum_{i \le M} \mu_i\,
     \pair{a_i}{\psi_\lambda}\, \psi_\lambda(x) \Bigr|^2
  \Biggr)^{1/2}
  \;=\; \norm[\Big]{ \sum_{i \le M} \mu_i v_i }_{\ell^2}
  \;\le\; \sum_{i \le M} \abs{\mu_i}\, \norm{v_i}_{\ell^2}
  \;\le\; \sum_{i} \abs{\mu_i}\, S_{\mathrm{Loc}}^{\psi}(a_i)(x) .
\]
To pass to the limit $M \to \infty$, we use Fatou's Lemma on $\Lambda^0$ with counting measure.\\
For each fixed $\lambda$, the absolute convergence gives $\sum_{i \le M} \mu_i \pair{a_i}{\psi_\lambda}\, \psi_\lambda(x) \to \pair{f}{\psi_\lambda}\, \psi_\lambda(x)$ as $M \to \infty$, so:
\begin{align*}
S_{\mathrm{Loc}}^{\psi}(f)(x)^2
&= \sum_{\lambda \in \Lambda^0} \lim_{M \to \infty} \Bigl| \sum_{i \le M} \mu_i \pair{a_i}{\psi_\lambda}\, \psi_\lambda(x) \Bigr|^2 \\
&\le \liminf_{M \to \infty} \sum_{\lambda \in \Lambda^0} \Bigl| \sum_{i \le M} \mu_i \pair{a_i}{\psi_\lambda}\, \psi_\lambda(x) \Bigr|^2
\;\le\; \Bigl( \sum_i \abs{\mu_i}\, S_{\mathrm{Loc}}^{\psi}(a_i)(x) \Bigr)^2 .
\end{align*}
Taking square roots proves the pointwise inequality. Finally, $p$-sub-additivity of $t \mapsto t^p$ applied to the countable non-negative sum gives $S_{\mathrm{Loc}}^{\psi}(f)(x)^p \le \sum_i \abs{\mu_i}^p S_{\mathrm{Loc}}^{\psi}(a_i)(x)^p$, and integrating with Tonelli's Theorem concludes the argument.
\end{proof}

\begin{lemma}
\label{lem:atom-Lp} Let $0 < p \le 1 < q \le \infty$. Every $L^q$ $h^{p}$-atom $a$, associated with a ball or a cube $K$, satisfies $\norm{a}_p \le 1$.
\end{lemma}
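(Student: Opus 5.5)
The plan is a single application of Hölder's Inequality on the support of the atom, exactly parallel to~\eqref{eq:atom-l1} but at the exponent $p$ instead of $1$. Since $\supp(a) \subseteq K$, we have $\norm{a}_p^p = \int_K \abs{a}^p$. For $q < \infty$, we apply Hölder's Inequality to $\abs{a}^p \cdot \mathbf{1}_K$ with the conjugate exponents $q/p > 1$ and $(1 - p/q)^{-1}$, which gives
\[
\int_K \abs{a}^p \;\le\; \Bigl( \int_K \abs{a}^q \Bigr)^{p/q} \abs{K}^{1 - p/q} \;=\; \norm{a}_q^p\, \abs{K}^{1 - p/q}.
\]
For $q = \infty$ we use instead the trivial bound $\int_K \abs{a}^p \le \norm{a}_\infty^p \abs{K}$, which is the same formula under the convention $p/q = 0$ fixed in the Conventions.

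Next we insert the size condition (ii), $\norm{a}_q \le \abs{K}^{1/q - 1/p}$, and obtain
\[
\norm{a}_p^p \le \abs{K}^{p/q - 1}\, \abs{K}^{1 - p/q} = 1.
\]
The exponents cancel exactly, whether $K$ is a ball or a cube. The bound uses neither the moment conditions nor the small/large dichotomy, so one argument covers both kinds of atom.

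There is no real obstacle. The only point to check is that $q/p > 1$, so that Hölder's Inequality applies with a legitimate conjugate exponent; this holds because $q > 1 \ge p$. I would also remark that the same computation with $p$ replaced by $1$ recovers~\eqref{eq:atom-l1}. The $p$-th power form $\norm{a}_p^p \le 1$ is the one used later together with $p$-sub-additivity.
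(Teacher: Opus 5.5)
Your proposal is correct and follows essentially the same route as the paper's proof. Both use H\"older's Inequality on $K$ at the exponent $q/p > 1$ for $q < \infty$ and the direct bound $\int_K \abs{a}^p \le \abs{K}\, \norm{a}_\infty^p$ for $q = \infty$, after which the size condition makes the powers of $\abs{K}$ cancel exactly, with no moment condition used.
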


\begin{proof}
Suppose first $q < \infty$. By Hölder's Inequality with exponent $q/p > 1$ on $K$ and the size condition:
\[
  \int_K \abs{a}^p
  \;\le\; \abs{K}^{1 - p/q}\, \norm{a}_q^p
  \;\le\; \abs{K}^{1 - p/q}\, \abs{K}^{p/q - 1}
  \;=\; 1 .
\]
If $q = \infty$, then directly $\int_K \abs{a}^p \le \abs{K}\, \norm{a}_\infty^p \le \abs{K}\, \abs{K}^{-1} = 1$, no moment condition used, the bound exactly $1$.
\end{proof}

\begin{lemma}
\label{lem:moment-correction} Let $L \in \mathbb{N} \cup \{0\}$. For any prescribed values $m_\nu \in \mathbb{R}$, $\abs{\nu} \le L$, there exists $v \in C^\infty_c(\mathbb{R}^n)$ with $\supp(v) \subseteq \overline{B(0, 1)}$ and $\int_{\mathbb{R}^n} y^\nu\, v(y)\, dy = m_\nu$ for all $\abs{\nu} \le L$. The construction is linear in $(m_\nu)$, so every norm of $v$ is controlled by $\max_\nu \abs{m_\nu}$.
\end{lemma}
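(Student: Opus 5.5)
The plan is a finite-dimensional linear algebra argument: a Gram matrix built from a fixed bump function, whose invertibility follows from linear independence of monomials.

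Fix once and for all $\theta \in C^\infty_c(\mathbb{R}^n)$, real-valued, nonnegative, not identically zero, with $\supp(\theta) \subseteq \overline{B(0,1)}$. Let $\mathcal P_L$ denote the space of polynomials of degree at most $L$, of dimension $d_L$, with monomial basis $\{y^\mu\}_{\abs{\mu} \le L}$. We look for $v$ of the form
\[
v(y) \;=\; \theta(y) \sum_{\abs{\mu} \le L} c_\mu\, y^\mu ,
\]
which automatically lies in $C^\infty_c(\mathbb{R}^n)$ with $\supp(v) \subseteq \overline{B(0,1)}$. Its moments are
\[
\int y^\nu v(y)\, dy \;=\; \sum_{\abs{\mu} \le L} G_{\nu\mu}\, c_\mu, \qquad G_{\nu\mu} := \int y^{\nu+\mu}\, \theta(y)\, dy .
\]
So the moment problem reduces to solving the $d_L \times d_L$ linear system $G c = m$.

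The key step is that $G$ is invertible. $G$ is the Gram matrix of the monomials in the inner product $\pair{P}{Q}_\theta := \int P\, Q\, \theta$ on $\mathcal P_L$; these are real polynomials, so no conjugation is needed. This form is positive definite. If $\int P^2 \theta = 0$, then $P = 0$ on the open set $\{\theta > 0\}$, which is nonempty by continuity of $\theta$. A polynomial vanishing on a nonempty open set is identically zero. Hence the monomials are linearly independent in $L^2(\theta\,dy)$, and $G$ is symmetric positive definite.

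Setting $c := G^{-1} m$ gives the required $v$. The map $m \mapsto v$ is linear by construction. Any norm satisfies
\[
\norm{v} \;\le\; \sum_\mu \abs{c_\mu}\, \norm{y^\mu \theta} \;\le\; C(n, L, \theta)\, \max_\nu \abs{m_\nu},
\]
with $C$ depending on $\norm{G^{-1}}$ and on the finitely many fixed functions $y^\mu\theta$. This gives the control of every norm of $v$ by $\max_\nu \abs{m_\nu}$, with constants depending only on $(n, L)$ once $\theta$ is fixed.

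There is no real obstacle. The only point needing care is the positive definiteness of $G$, handled by the open-set argument above. If complex moments were ever needed, one would apply the construction to real and imaginary parts separately, again linearly.
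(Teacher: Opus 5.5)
Your proposal is correct and follows the paper's proof essentially verbatim: the ansatz $v = \theta \sum_{\abs{\mu} \le L} c_\mu y^\mu$, the Gram matrix $G_{\nu\mu} = \int y^{\nu+\mu}\theta$ made positive definite because a nonzero polynomial cannot vanish on the open set $\{\theta > 0\}$, and $c = G^{-1}m$. Your explicit bound $\norm{v} \le C(n, L, \theta) \max_\nu \abs{m_\nu}$ spells out the linearity remark the paper leaves implicit.
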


\begin{proof}
Fix $\chi \in C^\infty_c(\overline{B(0, 1)})$, $\chi \ge 0$, $\chi \not\equiv 0$, and seek $v = \sum_{\abs{\mu} \le L} c_\mu\, y^\mu \chi$. The moment conditions form the linear system $G c = m$ with $G_{\nu, \mu} = \int y^{\nu + \mu} \chi\, dy$, the Gram matrix of the monomials in $L^2(\chi\, dy)$: it is positive definite, since $c^{\mathsf T} G c = \int \bigl( \sum_\mu c_\mu y^\mu \bigr)^2 \chi\, dy > 0$ for $c \ne 0$, a non-zero polynomial vanishing only on a null set while $\chi$ is positive on an open set. Hence $c = G^{-1} m$.
\end{proof}

\begin{theorem}
\label{thm:hp-wavelet} \textit{(for $p = 1$, see~\cite[Theorem~4.2.1]{elaoumari2025}.)} With the wavelet basis fixed in the Conventions and the coefficients $\alpha(\lambda) = \pair{f}{\psi_\lambda}$, $\beta_k = \pair{f}{\phi_k}$ understood via the pairing conventions \textup{(P1)}-\textup{(P3)} of Subsection~\ref{ssec:wavelet}, for $f \in \mathcal{S}'(\mathbb{R}^n)$ the following are equivalent definitions of $f \in h^p(\mathbb{R}^n)$:
\begin{enumerate}[(A)]
\item \(\displaystyle \sup_{\substack{F \subseteq \mathbb{Z}^n \sqcup \Lambda^0 \\ \text{finite}}}\ \sup_{\varepsilon = \pm 1}\ \norm[\bigg]{\sum_{k \in F \cap \mathbb{Z}^n} \varepsilon(k)\, \beta_k\, \phi_k \;+ \sum_{\lambda \in F \cap \Lambda^0} \varepsilon(\lambda)\, \alpha(\lambda)\, \psi_\lambda }_p < \infty .\)
\item $S_{\mathrm{Loc}}(f) \in L^p(\mathbb{R}^n)$, the square function of~\eqref{eq:SLoc-def}.
\item $S_{\mathrm{Loc}}^R(f) \in L^p(\mathbb{R}^n)$.
\item $S_{\mathrm{Loc}}^Q(f) \in L^p(\mathbb{R}^n)$.
\item $f = \sum_j \lambda_{j}\, a_j$ in $\mathcal{S}'(\mathbb{R}^n)$, where the $a_j$'s are $L^q$ $h^{p}$-atoms associated with cubes, and $\sum_j \abs{\lambda_{j}}^p < \infty$, for some $1 < q \leq \infty$.
\item $f = \sum_j \lambda_{j}\, a_j$ in $\mathcal{S}'(\mathbb{R}^n)$, where the $a_j$'s are $L^q$ $h^{p}$-atoms associated with balls, and $\sum_j \abs{\lambda_{j}}^p < \infty$, for some $1 < q \leq \infty$.
\item For every admissible Schwartz function $\varphi \in \mathcal S(\mathbb{R}^n)$, that is, $\int \varphi = 1$ and $\int x^\nu \varphi\, dx = 0$ for $0 < \abs{\nu} \le N_p$, we have: $f - \varphi * f \in H^p(\mathbb{R}^n)$ and $\varphi * f \in L^p(\mathbb{R}^n)$.
\end{enumerate}
Also, the associated quasi-norms are equivalent, with constants depending only on $(n, p, q,$ wavelet basis) (see Corollary~\ref{cor:equiv-norms}).
\end{theorem}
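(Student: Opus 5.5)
We plan to close a cycle of implications, using the lemmas of Section~\ref{sec:hp} as the individual links: (B)$\Leftrightarrow$(C)$\Leftrightarrow$(D), then (D)$\Rightarrow$(E)$\Rightarrow$(F)$\Rightarrow$(B), then (F)$\Rightarrow$(G)$\Rightarrow$(D), and finally (A)$\Leftrightarrow$(B).

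\textbf{Square-function conditions.} By Lemma~\ref{lem:sf-comp}, $c\,S^R_{\mathrm{Loc}}(f)\le S_{\mathrm{Loc}}(f)\le C_\psi S^{mQ}_{\mathrm{Loc}}(f)$ pointwise, so (B)$\Rightarrow$(C). Applying Theorem~\ref{thm:tp-atomic-restate} (ii)$\Rightarrow$(i) to each direction $e$ turns (C) into $(\beta,\alpha^e)\in t_p$. Lemma~\ref{lem:tp-to-sf} identifies this with (D). For (D)$\Rightarrow$(B) we pass from $mQ$ to $Q$: each $mQ(\lambda)$ is covered by a bounded number of same-generation cubes $Q'$. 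Shifting the coefficient to $Q'$ is a reindexing with bounded overlap, so $\|S^{mQ}_{\mathrm{Loc}}\|_p\lesssim\|S^Q_{\mathrm{Loc}}\|_p$ by the $p$-triangle inequality. A cleaner alternative is to route (D)$\Rightarrow$(E)$\Rightarrow$(F)$\Rightarrow$(B), which avoids this shift.

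\textbf{Atomic conditions.} For (D)$\Rightarrow$(E), Lemma~\ref{lem:tp-to-sf} gives $(\beta,\alpha^e)\in t_p$ for each $e$. Lemma~\ref{lem:atom-builder}(ii) then yields $L^2$ $h^p$-atoms on the cubes $mR_j$ with $\ell^p$ coefficients; we attach the father layer only to one direction. Summing over $e\in E$ and invoking~\eqref{eq:inh-wavelet-decomp} together with Lemma~\ref{lem:dist-conv} shows that the sum equals $f$ in $\mathcal S'$. Here we must check that $f$ has order less than $r$, or equivalently that (P1) coincides with the given coefficients (Remark~\ref{rem:pairing-compat}). (E)$\Rightarrow$(F) is immediate: enclose each cube in a ball, which only rescales atoms by a constant. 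For (F)$\Rightarrow$(B), Lemma~\ref{lem:S0-sublinear} together with the uniform bound of Lemma~\ref{lem:annulus-estimate-hp} controls the mother block. The father block needs $\sum_k|\pair{a}{\phi_k}|^p\lesssim1$ uniformly over $h^p$-atoms. For large atoms this is the counting and Hölder estimate of Lemma~\ref{lem:father-coeff-lp}. For small atoms the single-atom Taylor estimate gives a bounded count of nonzero $k$ and bounded size. Since $|\phi_k|\lesssim\mathbf 1_{mQ_k}$ with bounded overlap, the father part of $S_{\mathrm{Loc}}$ is controlled by $\sum_k|\beta_k|^p$.

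\textbf{Condition (G).} For (F)$\Rightarrow$(G), decompose $f=\sum\mu_j a_j$. Lemma~\ref{lem:schwartz-correction}(i),(ii) shows that $a_j-\varphi*a_j$ is a uniformly bounded multiple of an $H^p$-molecule in both cases: for a small atom $a_j$ itself is an $H^p$-atom, and (ii) handles $\varphi*a_j$. Summation in $H^p$ then holds by Taibleson--Weiss. Part (iii) gives $\varphi*f\in L^p$ by the $p$-triangle inequality, after identifying the $\mathcal S'$ limit with the pointwise limit via the continuity of $\varphi*$. For (G)$\Rightarrow$(F), split $f=(f-\varphi_0*f)+\varphi_0*f$. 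The first term has an $H^p$-atomic decomposition, and its atoms are in particular $h^p$-atoms (large ones included). The second term is decomposed by Lemma~\ref{lem:lowfreq-atoms}. This also gives (G)$\Rightarrow$(B) through (F).

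\textbf{Rademacher condition.} (A)$\Leftrightarrow$(B) is the Khintchine step. For (A)$\Rightarrow$(B), average over signs, apply Lemma~\ref{lem:khinchin-p} pointwise, integrate, and take the supremum over $F$ by monotone convergence. For (B)$\Rightarrow$(A), note that $S_{\mathrm{Loc}}$ is sign-invariant, so it suffices to bound $\|\sum_F c_\mu u_\mu\|_p\lesssim\|S_{\mathrm{Loc}}\|_p$ uniformly in $F$. This follows by running the finite-case atomic decomposition (D)$\Rightarrow$(E) on the finitely supported, signed coefficient pair and applying Lemma~\ref{lem:atom-Lp} to each atom.

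\textbf{Norms and main obstacle.} Quasi-norm equivalence is obtained by tracking the constants through the cycle. The main obstacle is consistency of coefficients across the pairing conventions, namely that the distribution reconstructed in (D)$\Rightarrow$(E) is $f$ itself when $f$ is given only via (G) or (F). We would handle this by first establishing (F)$\Rightarrow$(D) with (P2) coefficients, then proving reconstruction for such $f$ using Lemma~\ref{lem:dist-conv} term-wise on atoms, each atom being in $L^2$, so that its wavelet series holds.
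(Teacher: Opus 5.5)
Your proposal is correct and uses the paper's lemmas, but it closes the cycle along a different route. The paper proves \textup{(G)}$\Rightarrow$\textup{(B)} directly on the \textup{(P3)} coefficients. Its mother layer goes through the same concatenated atomic series you build: an $H^p$-decomposition of $g=f-\varphi_0*f$ plus Lemma~\ref{lem:lowfreq-atoms} for $u=\varphi_0*f$, fed into Lemmas~\ref{lem:S0-sublinear} and~\ref{lem:annulus-estimate-hp}. Its father layer goes through Lemma~\ref{lem:father-coeff-lp}, which applies a per-atom bound to the $H^p$-atoms of $g$ and Plancherel--P\'olya directly to $u$.

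You stop that argument at the coefficient-free statement \textup{(G)}$\Rightarrow$\textup{(F)} and prove \textup{(F)}$\Rightarrow$\textup{(B)} directly. There the father layer is controlled by the per-atom bound $\sum_k\abs{\pair{a}{\phi_k}}^p\lesssim 1$ over all $h^p$-atoms. Likewise you prove \textup{(B)}$\Rightarrow$\textup{(A)} where the paper proves \textup{(F)}$\Rightarrow$\textup{(A)}, by the same finite decomposition and Lemma~\ref{lem:atom-Lp}, minus the initial passage from atoms to the square function.

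The real gain is in the reconstruction step of \textup{(D)}$\Rightarrow$\textup{(E)}. For $f$ entering through \textup{(G)}, the paper needs the wavelet expansion of $g\in H^p$ to converge in $\mathcal S'$, which it imports from the global theory ($H^p=\dot F^0_{p,2}$, Frazier--Jawerth). You instead reduce \textup{(P3)} to \textup{(P2)} via \textup{(G)}$\Rightarrow$\textup{(F)} and Remark~\ref{rem:pairing-compat}. After that you only need each compactly supported atom's own expansion, plus a uniform per-atom absolute bound to justify interchanging sums. The proof of Lemma~\ref{lem:dist-conv} supplies that bound once each atom's coefficient pair has $t_p$-quasi-norm $\lesssim 1$. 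In return, the paper's route reads off $\norm{S_{\mathrm{Loc}}(f)}_p^p\lesssim\norm{g}_{H^p}^p+\norm{u}_p^p$ in one step from the \textup{(P3)} coefficients.

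Three points need care.

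\textbf{Direct \textup{(D)}$\Rightarrow$\textup{(B)}.} This is not justified as written. Moving the coefficient of $Q(\lambda)$ to a neighbour $Q(\lambda)+2^{-j}t$ is a scale-dependent shift, and it destroys the nesting of the socle. So the $p$-triangle inequality, which only handles the finitely many $t$, does not compare the shifted square function with the original. You would need either the Fefferman--Stein argument of Corollary~\ref{cor:equiv-norms}, or the observation that a shifted $t_p$-atom splits into boundedly many $t_p$-atoms with bases of its own generation. Your fallback through \textup{(E)} and \textup{(F)} is the one to keep.

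\textbf{Father bound in \textup{(F)}$\Rightarrow$\textup{(B)} for $1<q<2$.} The large-atom count in Lemma~\ref{lem:father-coeff-lp} uses Bessel's inequality, which needs $L^2$ atoms. For $1<q<2$, use H\"older on each $mQ_k$ together with bounded overlap:
\[
\sum_{k\in K}\abs{\pair{a}{\phi_k}}^p\le(\#K)^{1-p/q}\bigl(\sum_k\abs{\pair{a}{\phi_k}}^q\bigr)^{p/q}\lesssim\abs{B}^{1-p/q}\norm{a}_q^p\le 1 .
\]
For such atoms, the expansion used in your reconstruction converges in $L^q$, not $L^2$.

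\textbf{Enclosing ball in \textup{(E)}$\Rightarrow$\textup{(F)}.} A moment-free atom on a cube of side slightly above $1$ must go to a ball of radius at least $1$. The circumscribed ball will not do, since its radius is below $1$ when $n\le 3$.
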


\begin{proof}

We prove the implications:
\[
  \textup{(B)} \implies \textup{(C)} \implies \textup{(D)}
  \implies \textup{(E)} \implies \textup{(F)} \implies
  \textup{(G)} \implies \textup{(B)},
\]
together with the bridges \textup{(A)} $\implies$ \textup{(B)} and \textup{(F)} $\implies$ \textup{(A)}, establishing the equivalence of \textup{(A)}-\textup{(G)}. Write $C_1$ for the supremum in \textup{(A)}, $C_2, C_3, C_4$ for the $L^p$-quasi-norms of $S_{\mathrm{Loc}}(f), S_{\mathrm{Loc}}^R(f), S_{\mathrm{Loc}}^Q(f)$ of \textup{(B)}, \textup{(C)}, \textup{(D)}, and $C_5, C_6$ for $\bigl( \sum_j \abs{\lambda_j}^p \bigr)^{1/p}$ in \textup{(E)}, \textup{(F)}. Each implication propagates these quantitatively, the chain assembling into Corollary~\ref{cor:equiv-norms}.
\paragraph{\textbf{\boldmath (A) $\implies$ (B)}} Let $F \subseteq \mathbb{Z}^n \sqcup \Lambda^0$ be finite. For each $x \in \mathbb{R}^n$, Lemma~\ref{lem:khinchin-p} applied to the combined scalars $\{\beta_k \phi_k(x)\}_{k \in F \cap \mathbb{Z}^n} \cup \{\alpha(\lambda) \psi_\lambda(x)\}_{\lambda \in F \cap \Lambda^0}$ gives, writing $f_{F, \varepsilon}$ for the signed combined sum of condition \textup{(A)} at $(F, \varepsilon)$:
\[
  \biggl( \sum_{k \in F \cap \mathbb{Z}^n} \abs{\beta_k}^2 \abs{\phi_k(x)}^2
     + \sum_{\lambda \in F \cap \Lambda^0} \abs{\alpha(\lambda)}^2
     \abs{\psi_\lambda(x)}^2 \biggr)^{1/2}
  \;\le\; C_K(p)
  \biggl( 2^{-\abs{F}} \sum_{\varepsilon \in \{\pm 1\}^F}
     \abs{ f_{F, \varepsilon}(x) }^{p}
  \biggr)^{1/p} .
\]
Raising to the power $p$, integrating in $x$, and exchanging the finite sum with the integral:
\[
  \int_{\mathbb{R}^n} \biggl( \sum_{k \in F \cap \mathbb{Z}^n} \abs{\beta_k}^2 \abs{\phi_k(x)}^2
     + \sum_{\lambda \in F \cap \Lambda^0}
     \abs{\alpha(\lambda)}^2 \abs{\psi_\lambda(x)}^2
  \biggr)^{p/2} dx
  \;\le\; C_K(p)^p\; 2^{-\abs{F}}
     \sum_{\varepsilon \in \{\pm 1\}^F}
     \norm{ f_{F, \varepsilon} }_p^p ,
\]
the right side at most $C_K(p)^p\, C_1^p$ by hypothesis \textup{(A)}, each signed finite sum having $L^p$-norm at most $C_1$. Monotone convergence along the net of finite subsets $F$ of $\mathbb{Z}^n \sqcup \Lambda^0$ (Subsection~\ref{ssec:wavelet}) yields $S_{\mathrm{Loc}}(f) \in L^p(\mathbb{R}^n)$, condition \textup{(B)} with constant $C_K(p)\, C_1$. (as in~\cite[Chapter~5, Theorem~1]{meyer1992}, \cite[Theorem~4.2.1]{elaoumari2025}, with $C_K(p)$ replacing the classical Khintchine constant).
\paragraph{\textbf{\boldmath (B) $\implies$ (C)}} By the lower bound in Lemma~\ref{lem:sf-comp}, applied pointwise in both layers: $c\, S_{\mathrm{Loc}}^R(f)(x) \le S_{\mathrm{Loc}}(f)(x)$. Taking $L^p$-norms, condition \textup{(B)} gives condition \textup{(C)} for the family $\{R(\lambda)\} \cup \{R_k\}$ of the fixed basis, with its measure ratio constant $\gamma$ and with constant $c^{-1} C_2$.
\paragraph{\textbf{\boldmath (C) $\implies$ (D)}} We split by direction $e$. The
factorization~\eqref{eq:D-factorization} of Lemma~\ref{lem:tp-to-sf}, whose proof carries over, on the mother layer, with $\mathbf{1}_{Q(\lambda)}$ replaced by $\mathbf{1}_{R(\lambda)}$, shows that condition \textup{(C)} gives, for each $e \in E$:
\[
  \int_{\mathbb{R}^n} \biggl( \sum_{Q \in \mathcal{D}^{0}}
     \abs{\alpha^e(Q)}^2\, \abs{Q}^{-1}\,
     \mathbf{1}_{R^e(Q)}(x) \biggr)^{p/2} dx
  \;\le\; C_3^p ,
\]
where $R^e(Q) := R(\lambda)$ for the index $\lambda$ of direction $e$ with $Q(\lambda) = Q$. The family $\{R^e(Q)\}$ inherits the measure ratio $\abs{R^e(Q)} \ge \gamma \abs{Q}$ of the basis. By Theorem~\ref{thm:tp-atomic-restate}, implication \textup{(ii)} $\implies$ \textup{(i)}, proved there via the atomic decomposition \textup{(iii)}, each pair $(0, \alpha^e)$ thus lies in $t_{p}$, with $\norm{(0, \alpha^e)}_{t_{p}}^p \le C(n, p, \gamma)\, C_3^p$. The father layer is direct: the sub-cubes $R_k$ being pairwise disjoint with $\abs{R_k} \ge \gamma$, the father block of $S_{\mathrm{Loc}}^R(f)$ has $L^p$-mass at least $\gamma \sum_k \abs{\beta_k}^p$, so $\sum_k \abs{\beta_k}^p \le \gamma^{-1} C_3^p$, and the father block of $S_{\mathrm{Loc}}^Q(f)$ has $L^p$-mass exactly $\sum_k \abs{\beta_k}^p$, the unit cubes tiling $\mathbb{R}^n$. Reassembling the father block and the finitely many directions by the upper bound of Lemma~\ref{lem:tp-to-sf} and $p/2$-sub-additivity, the square function of condition \textup{(D)} has $L^p$-mass at most $\bigl( (2^n - 1)\, C(n, p, \gamma) + \gamma^{-1} \bigr) C_3^p$, which is condition \textup{(D)} with $C_4 = \bigl( (2^n - 1)\, C(n, p, \gamma) + \gamma^{-1} \bigr)^{1/p} C_3$.
\paragraph{\textbf{\boldmath (D) $\implies$ (E)}} By condition \textup{(D)} and Lemma~\ref{lem:tp-to-sf}, the father pair $(\beta, 0)$ and the mother pairs $(0, \alpha^e)$, $\alpha^e := \{\alpha(\lambda(e, Q))\}_{Q \in \mathcal{D}^{0}}$, $e \in E$, all lie in $t_{p}$. Decompose each of the $2^n$ pairs $(\beta, 0)$ and $(0, \alpha^e)$, $e \in E$, in restriction form (Theorem~\ref{thm:tp-atomic-restate}, \textup{(i)}$\implies$\textup{(iii)}) and build the functions from the atoms by Lemma~\ref{lem:atom-builder}\textup{(ii)}, the fixed direction of the lemma being $e$ for each mother pair and immaterial for the father pair. For the father pair the blocks are the singletons $\{k\}$ and the built functions the large-cube atoms $\kappa_0^{-1} \phi_k$ on the cubes $m Q_k$, no moment conditions being required there, with coefficients $\kappa_0 \beta_k$ and $\ell^p$-mass exactly $\sum_k \abs{\kappa_0 \beta_k}^p$. For each direction, the atoms $\kappa_0^{-1} a^e_j$ are $L^2$ $h^{p}$-atoms on the cubes $m R^e_j$, small or large according to $\ell(m R^e_j) \lessgtr 1$, with $N_p$ vanishing moments in the small case (Lemma~\ref{lem:atom-builder}\textup{(i)}). The combined mass satisfies:
\[
  \sum_{k \in \mathbb{Z}^n} \abs{\kappa_0 \beta_k}^p
  \;+ \sum_{e \in E} \sum_j \abs{\kappa_0 \lambda^e_j}^p
  \;\le\; \kappa_0^p\, C(n, p, \gamma) \Bigl( \norm{(\beta, 0)}_{t_{p}}^p
     + \sum_{e \in E} \norm{(0, \alpha^e)}_{t_{p}}^p \Bigr)
  \;\le\; C(n, p, m, \gamma)\, C_4^p ,
\]
by Lemma~\ref{lem:tp-to-sf}. By the uniform atomic bound~\eqref{eq:atom-hp-bound}, $p$-sub-additivity, and completeness of $h^{p}$, the series:
\begin{equation}
\label{eq:E-decomposition}
  \sum_{k \in \mathbb{Z}^n} (\kappa_0 \beta_k)\, (\kappa_0^{-1} \phi_k)
  \;+ \sum_{e \in E} \sum_j (\kappa_0 \lambda^e_j)\,
     (\kappa_0^{-1} a^e_j)
\end{equation}
is summable in $h^{p}$, hence in $\mathcal{S}'(\mathbb{R}^n)$, to some $g \in h^{p}$ (Lemma~\ref{lem:net-summability}) with $\norm{g}_{h^{p}}^p \lesssim \sum_k \abs{\beta_k}^p + \sum_{e, j} \abs{\lambda^e_j}^p$. It also converges to $f$ in $\mathcal{S}'(\mathbb{R}^n)$. The inhomogeneous expansion of $f$ converges to $f$ in $\mathcal{S}'(\mathbb{R}^n)$: for $f$ of order less than $r$, this is convention \textup{(P1)}, and for $f$ entering the present implication through condition (G), with the coefficients of convention \textup{(P3)}, it follows by splitting $f = g + u$ along~\eqref{eq:G-pairing}: the low-frequency part $u = \varphi_0 * f$ is bounded, by the Plancherel-Pólya inequality~\eqref{eq:pp}, hence of order zero and covered by \textup{(P1)}, while the expansion of $g \in H^p(\mathbb{R}^n)$ converges to $g$ in $H^p(\mathbb{R}^n)$, hence in $\mathcal{S}'(\mathbb{R}^n)$, by the classical wavelet characterization of the global Hardy spaces:~\cite[Chapter~5, Theorem~1, p.~143]{meyer1992} at $p = 1$, and for the full range the identification $H^{p} = \dot F^0_{p,2}$ together with the wavelet and $\varphi$-transform theory of~\cite{frazier-jawerth1990} (see also~\cite{wu1992, garciacuerva-martell2001} in dimension one), valid for compactly supported bases of regularity above $n_p$, its coefficients agreeing with those of \textup{(P3)} by Remark~\ref{rem:pairing-compat}. The expansion regroups into the blocks by the $\mathcal{S}'(\mathbb{R}^n)$-identity of Lemma~\ref{lem:atom-builder}\textup{(ii)}, the father family summing to $\sum_k \beta_k \phi_k$ and each direction family to $\sum_j \lambda^e_j a^e_j = \sum_{Q \in \mathcal{D}^{0}} \alpha^e(Q) \psi^e_Q$. By uniqueness of limits in $\mathcal{S}'(\mathbb{R}^n)$, $f = g \in h^{p}$ with $\norm{f}_{h^{p}}^p \lesssim C_4^p$, and concatenating the father and direction families into a single relabeled family $\{\lambda_j\, a_j\}$, with combined $\ell^p$ mass, gives condition \textup{(E)}.
\paragraph{\textbf{\boldmath (E) $\implies$ (F)}} We convert each cube atom of the decomposition \textup{(E)} into a ball atom. Let $1 < q \le \infty$ and let $a$ be an $L^q$ $h^{p}$-atom on a cube $K$ with center $x_K$ and side $\ell$. Set $B_K := B(x_K, \sqrt{n}\,\ell/2)$ for $\ell \le 1$ and $B_K := B(x_K, \max(\sqrt{n}\,\ell/2, 1))$ for $\ell > 1$. In both cases $K \subseteq B_K$ and $\abs{B_K} \le C_n \abs{K}$ with $C_n := \omega_n \max(\sqrt{n}/2, 1)^n \ge 1$ (for $\ell > 1$, using $\max(\sqrt{n}\,\ell/2, 1) \le \max(\sqrt{n}/2, 1)\,\ell$, and $C_n \ge 1$ by the containment). Since $1/q - 1/p < 0$:
\[
  \norm{a}_q
  \;\le\; \abs{K}^{1/q - 1/p}
  \;\le\; C_n^{1/p - 1/q}\, \abs{B_K}^{1/q - 1/p},
\]
so $\kappa_1^{-1} a$, with $\kappa_1 := C_n^{1/p - 1/q} \le C_n^{1/p}$ uniformly in $q$, satisfies the size condition of an $L^q$ $h^{p}$-atom on $B_K$. For the moments: a small-cube atom carries $N_p$ vanishing moments, meeting the requirement whether $B_K$ is small or large, and a large-cube atom carries none, but $B_K$ then has radius $\ge 1$ by construction. (The inflation is not decorative: for $n \le 3$ the circumscribing ball of a cube of side slightly greater than $1$ has radius $\sqrt{n}\,\ell/2 < 1$, and a moment-free atom on it would fail the small-ball requirements.) The father atoms $\kappa_0^{-1}\phi_k$ on the cubes $m Q_k$, of side $m \ge 1$, fall in the second case. Rescaling every coefficient by $\kappa_1$ leaves the identity and the mode of convergence unchanged and multiplies the coefficient bound by $\kappa_1^p \le C_n$: this is condition \textup{(F)}, with the integrability exponents of \textup{(E)} and $C_6 \le C_n^{1/p}\, C_5$.
\paragraph{\textbf{\boldmath (F) $\implies$ (G)}} Let $\varphi \in \mathcal S(\mathbb{R}^n)$ satisfy $\int \varphi = 1$ and $\int x^\nu\, \varphi\, dx = 0$ for $0 < \abs{\nu} \le N_p$, and let $f = \sum_j \lambda_j\, a_j$ be the decomposition of condition \textup{(F)}, summable by the criterion of Lemma~\ref{lem:net-summability} as recorded in the Conventions. Define $\varphi * f := \pair{f}{\tilde\varphi(\cdot - x)}$, $\tilde\varphi(y) := \overline{\varphi(-y)}$, the usual convolution of $\mathcal{S}'(\mathbb{R}^n)$ against $\mathcal S(\mathbb{R}^n)$. Fix $\varepsilon := 1$: by Lemma~\ref{lem:schwartz-correction}\textup{(iii)}, $\norm{\varphi * a_j}_p \le C(n, p, \varphi)$ uniformly in $j$, so by the criterion of Lemma~\ref{lem:net-summability}, applied in $L^p(\mathbb{R}^n)$, the family $(\lambda_j\, \varphi * a_j)_j$ is summable there, to a limit $h \in L^p(\mathbb{R}^n)$ with $\norm{h}_p \le C(n,p,\varphi) \bigl( \sum_j \abs{\lambda_j}^p \bigr)^{1/p}$. Since $\varphi * (\cdot)$ is continuous and linear from $\mathcal{S}'(\mathbb{R}^n)$ to continuous functions, and $f$ is itself summable, $h = \varphi * f$: for every $x$, testing $f_F := \sum_{j \in F} \lambda_j a_j \to f$ along any exhaustion $F$ against the fixed test function $\tilde\varphi(\cdot - x)$ gives $\sum_{j \in F} \lambda_j (\varphi * a_j)(x) \to (\varphi * f)(x)$, and the same net converges to $h(x)$ for a.e.\ $x$ along a subsequence, by $L^p$-convergence; the two limits agree by uniqueness. Hence $\varphi * f \in L^p(\mathbb{R}^n)$ with $\norm{\varphi * f}_p \le C(n,p,\varphi) \bigl( \sum_j \abs{\lambda_j}^p \bigr)^{1/p}$.

We split $f = \sum_k \lambda_{1,k}\, A_k + \sum_j \lambda_{2,j}\, a_j$ by the size of the ball, with large-ball atoms $A_k$ and small-ball atoms $a_j$, all carrying $N_p$ vanishing moments, and correspondingly, by the same argument, $\varphi * f = \sum_k \lambda_{1,k}\, (\varphi * A_k) + \sum_j \lambda_{2,j}\, (\varphi * a_j)$ in $L^p(\mathbb{R}^n)$, so that in $\mathcal{S}'(\mathbb{R}^n)$:
\[
  f - \varphi * f
  \;=\; \sum_k \lambda_{1,k}\, \bigl( A_k - \varphi * A_k
     \bigr)
  \;+\; \sum_j \lambda_{2,j}\, a_j
  \;-\; \sum_j \lambda_{2,j}\, (\varphi * a_j) .
\]
Each series is an $\ell^p$-combination of uniformly bounded elements of $H^{p}$: by Lemma~\ref{lem:schwartz-correction}\textup{(i)}--\textup{(ii)}, the corrections and mollifications are, up to a constant depending only on $(n, p, q, \varphi)$, classical $(p, q, 1)$-molecules, and each $a_j$ is precisely an $H^{p}$-atom. By the classical molecular embedding of Subsection~\ref{ssec:HP}~\cite{taibleson1980}, the uniform atomic bound~\eqref{eq:atom-hp-bound} in its $H^{p}$ form, the equivalence of the atomic and maximal quasi-norms~\cite{latter1978}, \cite[Chapter~III]{garcia-cuerva-rubio1985}, and $p$-sub-additivity, each series converges in $H^{p}$ with quasi-norm controlled by $C_6^p$, so $f - \varphi * f \in H^{p}$ with $\norm{f - \varphi * f}_{H^{p}} \le C(n, p, q, \varphi)\, C_6$. Condition \textup{(G)} thus holds for every admissible Schwartz $\varphi$.

\paragraph{\textbf{\boldmath (G) $\implies$ (B)}} Let $\varphi_0$ be the band-limited admissible Schwartz function of~\eqref{eq:admiss-p-s}, with $\supp(\hat\varphi_0) \subseteq \overline{B(0, R)}$, and split $f = g + u$, $g := f - \varphi_0 * f \in H^{p}$ and $u := \varphi_0 * f \in L^p(\mathbb{R}^n)$ by condition \textup{(G)}, the wavelet coefficients of $f$ being those of convention \textup{(P3)}, which split accordingly. The father coefficients are handled by Lemma~\ref{lem:father-coeff-lp}: $\sum_k \abs{\beta_k}^p \le C \bigl( \norm{g}_{H^{p}}^p + \norm{u}_p^p \bigr) < \infty$, and the father block of $S_{\mathrm{Loc}}(f)$ then obeys $\norm[\Big]{\Bigl(\sum_k \abs{\beta_k}^2 \abs{\phi_k}^2\Bigr)^{1/2}}_p^p \le \norm{\phi_0}_p^p \sum_k \abs{\beta_k}^p$ by $p/2$-sub-additivity and translation invariance, $\phi_0$ denoting the tensor scaling function, so it suffices to bound the mother block. For the square function, we assemble a single atomic series for $f$: a decomposition $g = \sum_i \mu_i b_i$ into $L^2$ $H^{p}$-atoms with $\sum_i \abs{\mu_i}^p \lesssim \norm{g}_{H^{p}}^p$~\cite{latter1978}, \cite[Chapter~III]{garcia-cuerva-rubio1985}, convergent in $H^{p}$ hence in $\mathcal{S}'(\mathbb{R}^n)$, every $H^{p}$-atom being an $h^{p}$-atom (its moments meet the small-ball requirement when $r < 1$ and exceed the vacuous large-ball one otherwise), concatenated with the decomposition $u = \sum_k \mu_k' A_k$ of Lemma~\ref{lem:lowfreq-atoms} into large-ball $L^\infty$, thus $L^2$, $h^{p}$-atoms, convergent in $\mathcal{S}'(\mathbb{R}^n)$, with $\sum_k \abs{\mu_k'}^p \le C(n, p, R) \norm{u}_p^p$. The concatenation converges to $f$ in $\mathcal{S}'(\mathbb{R}^n)$ with $\ell^p$ coefficients, so Lemma~\ref{lem:S0-sublinear} applies, the coefficients of \textup{(P2)} agreeing with \textup{(P3)} by Remark~\ref{rem:pairing-compat}, and with the uniform annulus estimate of Lemma~\ref{lem:annulus-estimate-hp} at $q = 2$:
\[
  \int_{\mathbb{R}^n} S_{\mathrm{Loc}}^{\psi}(f)^p\, dx
  \;\le\; \sum_i \abs{\mu_i}^p
     \int_{\mathbb{R}^n} S_{\mathrm{Loc}}^{\psi}(b_i)^p\, dx
  + \sum_k \abs{\mu_k'}^p
     \int_{\mathbb{R}^n} S_{\mathrm{Loc}}^{\psi}(A_k)^p\, dx
  \;\le\; C \Bigl( \sum_i \abs{\mu_i}^p
     + \sum_k \abs{\mu_k'}^p \Bigr) .
\]
Together with the father bound, this is condition \textup{(B)}, with $C_2^p \lesssim \norm{g}_{H^{p}}^p + \norm{u}_p^p$.
\paragraph{\textbf{\boldmath (F) $\implies$ (A)}} Let $f = \sum_j \mu_j\, a_j$ be the decomposition of condition \textup{(F)}, convergent in $\mathcal{S}'(\mathbb{R}^n)$, into $L^q$ $h^{p}$-atoms on balls with $\sum_j \abs{\mu_j}^p \le C_6^p$, and define the coefficients of convention \textup{(P2)}, $\alpha(\lambda) := \sum_j \mu_j \pair{a_j}{\psi_\lambda}$, each series absolutely convergent and decomposition-independent (Lemma~\ref{lem:S0-sublinear}, Remark~\ref{rem:pairing-compat}). By the same lemma and the uniform annulus estimate of Lemma~\ref{lem:annulus-estimate-hp}:
\begin{equation}
\label{eq:S0-chain}
  \int_{\mathbb{R}^n} S_{\mathrm{Loc}}^{\psi}(f)^p\, dx
  \;\le\; \sum_j \abs{\mu_j}^p
     \int_{\mathbb{R}^n} S_{\mathrm{Loc}}^{\psi}(a_j)^p\, dx
  \;\le\; C \sum_j \abs{\mu_j}^p ,
\end{equation}
with $C = C(n, p, q, m, \text{wavelet basis})$. Fix a finite $F \subseteq \Lambda^0$ and signs $\varepsilon(\lambda) = \pm 1$, and set:
\[
  f_{F, \varepsilon} := \sum_{\lambda \in F} \varepsilon(\lambda)\, \alpha(\lambda)\, \psi_\lambda ,
\]
a compactly supported finite combination whose coefficients are $\varepsilon(\lambda') \alpha(\lambda') \mathbf{1}_F(\lambda')$ by orthonormality. Its square function is thus \emph{sign-free} and pointwise dominated by that of $f$:
\begin{equation}
\label{eq:sign-free}
  S_{\mathrm{Loc}}^{\psi}(f_{F,\varepsilon})
  \;=\; \Bigl( \sum_{\lambda \in F} \abs{\alpha(\lambda)}^2\,
     \abs{\psi_\lambda}^2 \Bigr)^{1/2}
  \;\le\; S_{\mathrm{Loc}}^{\psi}(f),
  \qquad \text{uniformly in } (F, \varepsilon) .
\end{equation}
By~\eqref{eq:sign-free} and~\eqref{eq:S0-chain}, $f_{F,\varepsilon}$ satisfies condition \textup{(B)}, hence, by the proved implications \textup{(B)} $\implies$ \textup{(C)} $\implies$ \textup{(D)} applied to its coefficient sequence, whose square function is that of the unsigned sequence, $\sum_{e} \norm{(0, (\alpha \mathbf{1}_F)^e)}_{t_{p}}^p \le C' \int S_{\mathrm{Loc}}^{\psi}(f)^p$. The sequence being finitely supported, the finite case clauses of Theorem~\ref{thm:tp-atomic-restate} and Lemma~\ref{lem:atom-builder} produce a \emph{finite} family of $L^2$ $h^{p}$-atoms $b_i$ on cubes with:
\begin{equation}
\label{eq:builder-finite}
  f_{F,\varepsilon} \;=\; \sum_{i = 1}^{I} \nu_i\, b_i
  \quad \text{pointwise},
  \qquad
  \sum_{i = 1}^{I} \abs{\nu_i}^p
  \;\le\; \kappa_0^p\, C(n, p, \gamma)\, C'
     \int_{\mathbb{R}^n} S_{\mathrm{Loc}}^{\psi}(f)^p ,
\end{equation}
all constants independent of $(F, \varepsilon)$. By $p$-sub-additivity and Lemma~\ref{lem:atom-Lp}, $\norm{f_{F,\varepsilon}}_p^p \le \sum_i \abs{\nu_i}^p \lesssim \int S_{\mathrm{Loc}}^{\psi}(f)^p \le C \sum_j \abs{\mu_j}^p$. Taking the supremum over $(F, \varepsilon)$ and the infimum over the decompositions of \textup{(F)} bounds the mother block of the signed sums, with $\sup_{F, \varepsilon} \norm{f_{F,\varepsilon}}_p \lesssim \norm{f}_{h^{p}, \mathrm{at}}$. The father block obeys $\norm[\Big]{\sum_{k \in F_0} \varepsilon(k)\, \beta_k\, \phi_k}_p^p \le \norm{\phi_0}_p^p \sum_k \abs{\beta_k}^p$ by $p$-sub-additivity, uniformly in the signs, the mass being finite by the proved implication \textup{(F)} $\implies$ \textup{(G)} together with Lemma~\ref{lem:father-coeff-lp}, and the combined supremum of \textup{(A)} follows by $p$-sub-additivity of the two blocks.
\end{proof}

\begin{corollary}
\label{cor:nec-cond} \textit{(Cf.\ \cite[Chapter~5]{meyer1992} and \cite[Corollary~4.2.1]{elaoumari2025}.)} If $f \in h^p(\mathbb{R}^n)$, with wavelet coefficients in the sense of the pairing conventions of Subsection~\ref{ssec:wavelet}, then:
\[
  \sum_{k \in \mathbb{Z}^n} \abs{\beta_k}^p
  \;\lesssim\; \norm{f}_{h^{p}}^p,
  \qquad
  \abs{\alpha(\lambda)}
  \;\lesssim\; \norm{f}_{h^{p}}\,
     \abs{Q(\lambda)}^{1/2 - 1/p}
  \quad \forall\, \lambda \in \Lambda^0 ,
\]
with implicit constants depending only on $(n, p, m, \text{wavelet basis})$, the exponent $1/2 - 1/p$ being sharp, uniformly over $\Lambda^0$.
\end{corollary}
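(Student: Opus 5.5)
The plan is to read both bounds off condition \textup{(B)} of Theorem~\ref{thm:hp-wavelet}, together with the quasi-norm equivalence $\norm{S_{\mathrm{Loc}}(f)}_p \lesssim \norm{f}_{h^{p}}$ of Corollary~\ref{cor:equiv-norms}. For the father layer, the lower bound of Lemma~\ref{lem:sf-comp}, in its termwise father form $\abs{\phi_k} \ge c\,\mathbf{1}_{R_k}$, gives
\[
\int_{\mathbb{R}^n} S_{\mathrm{Loc}}(f)^p \;\ge\; c^p \int_{\mathbb{R}^n} \Bigl( \sum_k \abs{\beta_k}^2 \mathbf{1}_{R_k} \Bigr)^{p/2} \;=\; c^p \sum_k \abs{\beta_k}^p \abs{R_k} \;\ge\; c^p \gamma \sum_k \abs{\beta_k}^p .
\]
The equality holds because the sets $R_k \subseteq Q_k$ are pairwise disjoint. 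This yields $\sum_k \abs{\beta_k}^p \lesssim \norm{f}_{h^{p}}^p$. Alternatively, one can route through \textup{(G)} and Lemma~\ref{lem:father-coeff-lp}, controlling $\norm{f-\varphi_0*f}_{H^p}$ and $\norm{\varphi_0*f}_p$ by $\norm{f}_{h^p}$ via the implication (F)$\implies$(G).

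For a single mother coefficient, fix $\lambda\in\Lambda^0$. Discarding every term of $S_{\mathrm{Loc}}(f)$ except the $\lambda$-th and using \eqref{eq:lb-psi}, we get $S_{\mathrm{Loc}}(f) \ge c\,\abs{\alpha(\lambda)}\,\abs{Q(\lambda)}^{-1/2}\mathbf{1}_{R(\lambda)}$ pointwise. Integrating the $p$-th power gives
\[
c^p \abs{\alpha(\lambda)}^p \abs{Q(\lambda)}^{-p/2}\,\gamma\,\abs{Q(\lambda)} \;\le\; \norm{S_{\mathrm{Loc}}(f)}_p^p \;\lesssim\; \norm{f}_{h^{p}}^p,
\]
and therefore $\abs{\alpha(\lambda)} \lesssim \norm{f}_{h^p}\abs{Q(\lambda)}^{1/2-1/p}$, with a constant depending only on $c,\gamma,p$ and the equivalence constants. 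This is the Chebyshev-type single-term extraction, and no cancellation is needed. If one prefers to avoid citing the corollary, the same bound follows directly from \textup{(F)}. By \textup{(P2)}, $\abs{\alpha(\lambda)}\le\sum_j\abs{\mu_j}\abs{\pair{a_j}{\psi_\lambda}}$. It then suffices to show $\abs{\pair{a}{\psi_\lambda}}\lesssim \abs{Q(\lambda)}^{1/2-1/p}$ uniformly over atoms $a$, and to sum using $\ell^p\hookrightarrow\ell^1$. That atom estimate would need a case split by comparing $r_a$ with $\ell(Q(\lambda))$, using Taylor cancellation as in Lemma~\ref{lem:annulus-estimate-hp} when $r_a\le\ell(Q(\lambda))$ and the size bound otherwise. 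The square-function route avoids this, so I would use it.

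For sharpness, the plan is to exhibit, for each $\lambda\in\Lambda^0$, the function $f=\psi_\lambda$. Its coefficients are $\alpha(\lambda)=1$ and all others vanish, by orthonormality. Moreover $S_{\mathrm{Loc}}(f)=\abs{\psi_\lambda}$, so $\norm{f}_{h^p}\simeq\norm{\psi_\lambda}_p=\abs{Q(\lambda)}^{1/p-1/2}\norm{\psi^e}_p$. Hence $\abs{\alpha(\lambda)}=1\simeq \norm{f}_{h^p}\abs{Q(\lambda)}^{1/2-1/p}$ uniformly in $\lambda$, so no larger power of $\abs{Q(\lambda)}$ can replace the exponent as $\ell(Q(\lambda))\to0$. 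The main point requiring care is this lower bound $\norm{\psi_\lambda}_{h^p}\gtrsim\abs{Q(\lambda)}^{1/p-1/2}$. I would take it from the equivalence with condition \textup{(B)} rather than from the maximal function, since then it is an exact computation.
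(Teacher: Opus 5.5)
Your proposal is correct and follows the paper's proof in substance. The father bound is exactly the paper's refinement through the disjoint sub-cubes $R_k$. The mother bound is the same single-term extraction, except that the paper reads it off condition \textup{(A)} at $F=\{\lambda\}$, using the exact value $\norm{\psi_\lambda}_p=\norm{\psi^e}_p\abs{Q(\lambda)}^{1/p-1/2}$, rather than off \textup{(B)} through $R(\lambda)$. For sharpness, the paper bounds $\norm{\abs{Q(\lambda)}^{1/2-1/p}\psi_\lambda}_{h^p}$ from above by Lemma~\ref{lem:atom-builder}\textup{(i)} and \eqref{eq:atom-hp-bound}, instead of invoking the two-sided equivalence with \textup{(B)}.

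One small correction: the direction sharpness actually needs is the upper bound $\norm{\psi_\lambda}_{h^p}\lesssim\abs{Q(\lambda)}^{1/p-1/2}$. The lower bound you flag as the delicate point is just the inequality already proved, applied with $\alpha(\lambda)=1$. Your use of the full two-sided equivalence still covers the needed direction, so the argument stands.
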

\begin{proof}
By Theorem~\ref{thm:hp-wavelet}, $f$ satisfies condition \textup{(A)} with constant $C_A \lesssim \norm{f}_{h^{p}}$. The first bound follows from \textup{(A)} at the father singletons $F = \{k\}$: $\abs{\beta_k}\, \norm{\phi}_{L^p(\mathbb{R})}^n = \norm{\beta_k \phi_k}_p \le C_A$ uniformly, and summing $\abs{\beta_k}^p$ over $k$ against the disjoint sub-cubes $R_k$ on which $\abs{\phi_k} \ge c$ refines this to the stated $\ell^p$ bound, since $\gamma \sum_k \abs{\beta_k}^p \le c^{-p} \int_{\mathbb{R}^n} S_{\mathrm{Loc}}(f)^p \lesssim C_A^p$ by the proved implication \textup{(A)} $\implies$ \textup{(B)}. The second follows from \textup{(A)} at the singleton $F = \{\lambda\}$ together with the change of variables $u = 2^j x - k$:
\[
  \abs{\alpha(\lambda)}
  \;=\; \frac{ \norm{\alpha(\lambda) \psi_\lambda}_p }
     { \norm{\psi_\lambda}_p }
  \;\le\; C_A \Bigl( \min_{e \in E}
     \norm{\psi^e}_p \Bigr)^{-1}
     \abs{Q(\lambda)}^{1/2 - 1/p},
\]
since $\norm{\psi_\lambda}_p = \norm{\psi^e}_p\, \abs{Q(\lambda)}^{1/p - 1/2}$. At $p = 1$ this reduces to the thesis statement $\abs{\alpha(\lambda)} \le C\, \abs{Q(\lambda)}^{-1/2}$.

For the sharpness, let $f_\lambda := \abs{Q(\lambda)}^{1/2 - 1/p}\, \psi_\lambda$: the one-term sequence $\abs{Q(\lambda)}^{1/2 - 1/p}\, \mathbf{1}_{\{Q(\lambda)\}}$ is a $t_{p}$-atom with base $Q(\lambda)$, saturating the mass condition~\eqref{eq:tp-atom-mass}, so Lemma~\ref{lem:atom-builder}\textup{(i)} and the uniform atomic bound~\eqref{eq:atom-hp-bound} give $\norm{f_\lambda}_{h^{p}} \le \kappa_0\, C_{\mathrm{at}}$, while the bound just proved, the coefficient being $\alpha(\lambda) = \abs{Q(\lambda)}^{1/2 - 1/p}$ by orthonormality, gives $\norm{f_\lambda}_{h^{p}} \gtrsim 1$. Hence $\norm{f_\lambda}_{h^{p}} \simeq 1$ uniformly in $\lambda$, and letting $\ell(Q(\lambda)) \to 0$ shows the exponent cannot be increased.
\end{proof}

\begin{corollary}
\label{cor:equiv-norms} \textit{(Cf.\ \cite[Chapter~5]{meyer1992} and \cite[Corollary~4.2.2]{elaoumari2025}.)} In the notation of Theorem~\ref{thm:hp-wavelet}, for $f \in h^p(\mathbb{R}^n)$, with wavelet coefficients in the sense of the pairing conventions of Subsection~\ref{ssec:wavelet}, the following quasi-norms are equivalent, with constants depending only on $n$, $p$, $m$, $\gamma$, and the wavelet basis:
\begin{align*}
\norm{f}_{h^{p}, \mathrm{at}}^p
&\simeq \sup_{\substack{F \subseteq \mathbb{Z}^n \sqcup \Lambda^0 \\ \text{finite}}}\;
   \sup_{\varepsilon = \pm 1}\;
   \norm[\Big]{\sum_{k \in F \cap \mathbb{Z}^n} \varepsilon(k)\, \beta_k\, \phi_k
   + \sum_{\lambda \in F \cap \Lambda^0} \varepsilon(\lambda)\,
   \alpha(\lambda)\, \psi_\lambda}_p^p
\simeq \norm[\big]{S_{\mathrm{Loc}}(f)}_p^p \\
&\;\simeq\; \norm[\big]{S_{\mathrm{Loc}}^R(f)}_p^p
\;\simeq\; \norm[\big]{S_{\mathrm{Loc}}^Q(f)}_p^p , \\
\text{where}\quad \norm{f}_{h^{p}, \mathrm{at}}
&:= \inf \Bigl\{ \bigl( \textstyle\sum_j \abs{\mu_j}^p \bigr)^{1/p} : f = \textstyle\sum_j \mu_j\, a_j \text{ in } \mathcal{S}'(\mathbb{R}^n), \; a_j \text{ } L^2 \text{ } h^{p}\text{-atoms} \Bigr\} .
\end{align*}
\end{corollary}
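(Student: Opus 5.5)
The plan is to read off Corollary~\ref{cor:equiv-norms} from the quantitative chain in the proof of Theorem~\ref{thm:hp-wavelet}, closing the cycle through the atomic quasi-norm with $L^2$ atoms. Write $\norm{f}_{\mathrm{at}}$ for the infimum in the statement, and $C_1,\dots,C_4$ for the supremum in (A) and the $L^p$-quasi-norms of $S_{\mathrm{Loc}}(f), S_{\mathrm{Loc}}^R(f), S_{\mathrm{Loc}}^Q(f)$. The implications already proved give $C_2 \lesssim C_1$ from (A)$\implies$(B), $C_3 \le c^{-1}C_2$ from (B)$\implies$(C), and $C_4 \lesssim C_3$ from (C)$\implies$(D). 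I will also record the reverse comparisons: $C_2 \le C_\psi\, \norm{S^{mQ}_{\mathrm{Loc}}(f)}_p$ by Lemma~\ref{lem:sf-comp}, and $C_3 \le C_4$ pointwise, since $R(\lambda)\subseteq Q(\lambda)$ and $R_k\subseteq Q_k$. These reverse bounds are convenient but not strictly needed once the cycle closes.

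From (D) to the atomic norm, I use (D)$\implies$(E): the decomposition \eqref{eq:E-decomposition} consists of $L^2$ $h^p$-atoms on cubes (Lemma~\ref{lem:atom-builder}), including the father atoms $\kappa_0^{-1}\phi_k$. It has $\ell^p$ mass $\lesssim C_4^p$ and converges to $f$ in $\mathcal S'$. Cube atoms can be replaced by ball atoms at cost $\kappa_1^p\le C_n$, as in (E)$\implies$(F), keeping $q=2$; if the atomic norm is intended with cube atoms, this step is unnecessary. Hence $\norm{f}_{\mathrm{at}}^p \lesssim C_4^p$.

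To close the cycle, I take any $L^2$ decomposition $f=\sum_j\mu_ja_j$ and run (F)$\implies$(A) with $q=2$. The mother block of every signed finite sum has $\norm{f_{F,\varepsilon}}_p^p \lesssim \sum_j|\mu_j|^p$, via \eqref{eq:S0-chain}, \eqref{eq:builder-finite} and Lemma~\ref{lem:atom-Lp}. The father block is bounded by $\norm{\phi_0}_p^p\sum_k|\beta_k|^p$. That sum is $\lesssim \norm{f-\varphi_0*f}_{H^p}^p+\norm{\varphi_0*f}_p^p$ by Lemma~\ref{lem:father-coeff-lp}, and hence $\lesssim \sum_j|\mu_j|^p$ by the quantitative (F)$\implies$(G) with $\varphi=\varphi_0$ fixed, so its constant depends only on the basis data. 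Taking the infimum over decompositions gives $C_1^p\lesssim \norm{f}_{\mathrm{at}}^p$, and all five quantities are comparable.

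The main obstacle is making sure every constant in the cycle depends only on $(n,p,m,\gamma,\text{basis})$ and on nothing tied to $f$ or to the chosen decomposition. There are two points to check. First, fixing $q=2$ removes the $q$-dependence, and the $(F)\implies(G)$ constant depends on $\varphi_0$, which is a fixed function. Second, the coefficients must not depend on which decomposition is used. By Remark~\ref{rem:pairing-compat}, the (P2) coefficients are independent of the decomposition, so the quantities $C_1,\dots,C_4$ are intrinsic to $f$ and the infimum is legitimate.
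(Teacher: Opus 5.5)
Your proposal is correct. For atomic norm $\simeq$ \textup{(A)} $\simeq$ \textup{(B)} $\simeq$ \textup{(C)} it is the paper's own argument: chain the quantitative implications of Theorem~\ref{thm:hp-wavelet}, close the loop with \textup{(F)}$\implies$\textup{(A)} at $q = 2$, and control the father block by Lemma~\ref{lem:father-coeff-lp} together with \textup{(F)}$\implies$\textup{(G)} at the fixed $\varphi_0$.

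You genuinely differ from the paper only on the last equivalence, $\norm{S_{\mathrm{Loc}}^R(f)}_p \simeq \norm{S_{\mathrm{Loc}}^Q(f)}_p$. You get the nontrivial direction $\norm{S_{\mathrm{Loc}}^Q(f)}_p \lesssim \norm{S_{\mathrm{Loc}}^R(f)}_p$ from the cycle itself, that is, from \textup{(C)}$\implies$\textup{(D)}. That step runs through the stopping-time atomic decomposition of $t_p$ (Theorem~\ref{thm:tp-atomic-restate}, \textup{(ii)}$\implies$\textup{(i)}). The paper instead proves it directly by a maximal transfer: it bounds $\mathbf{1}_{Q_\mu} \le \gamma^{-4/p} (M \mathbf{1}_{R_\mu})^{4/p}$ pointwise and applies the Fefferman--Stein vector-valued inequality in $L^2(\ell^{4/p})$.

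Each route has its merit.
\begin{itemize}
\item Yours stays inside the paper's own machinery. It also shows that the separate maximal argument is logically redundant for the statement, since \textup{(D)} already sits on the chain.
\item The paper's route does not depend on the $t_p$ atomic theory or on $f \in h^p$. It is a general real-variable comparison, valid for arbitrary nonnegative coefficients and any subsets $R_\mu \subseteq Q_\mu$ of measure ratio at least $\gamma$, and it gives an explicit $\gamma^{-2/p}$ dependence.
\end{itemize}

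One small point when closing the loop. The atomic infimum in the statement admits atoms on cubes as well as balls, whereas Lemma~\ref{lem:annulus-estimate-hp} (used inside \textup{(F)}$\implies$\textup{(A)}) is stated for balls. So convert any cube atoms first by the \textup{(E)}$\implies$\textup{(F)} inflation, at the harmless cost $\kappa_1^p \le C_n$. You did this in the other direction but not here.
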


\begin{proof}
The first three equivalences follow by chaining the quantitative forms of the implications of Theorem~\ref{thm:hp-wavelet}, condition \textup{(E)} connected to \textup{(A)} through the bridge \textup{(F)} $\implies$ \textup{(A)} and the arrows \textup{(A)} $\implies$ \textup{(B)} $\implies$ \textup{(C)} $\implies$ \textup{(D)} $\implies$ \textup{(E)} $\implies$ \textup{(F)}, each with constants depending only on $(n, p, m, \gamma, \text{wavelet basis})$ and each treating the two layers of the basis together. For the last equivalence, the direction $\gtrsim$ is trivial from $\mathbf{1}_{R_k} \le \mathbf{1}_{Q_k}$ and $\mathbf{1}_{R(\lambda)} \le \mathbf{1}_{Q(\lambda)}$, and the direction $\lesssim$ is the maximal transfer, run over the two layers at once: for $\mu \in \mathbb{Z}^n \sqcup \Lambda^0$, write $Q_\mu$, $R_\mu$ for $Q_k$, $R_k$ on the father layer and $Q(\lambda)$, $R(\lambda)$ on the mother layer, and $c_\mu$ for $\abs{\beta_k}$ and $\abs{\alpha(\lambda)}$ accordingly, so that $\abs{R_\mu} \ge \gamma \abs{Q_\mu}$ on both layers, the unit cubes having measure one. Fix $\tau := p/2$. Since $R_\mu \subseteq Q_\mu$, for every $x \in Q_\mu$ one has $M(\mathbf{1}_{R_\mu})(x) \ge \gamma$, whence, indicators being idempotent under powers and the bound raised to the power $2/\tau$, $\mathbf{1}_{Q_\mu} \le \gamma^{-2/\tau} \bigl( M ( \mathbf{1}_{R_\mu}^{\,\tau} ) \bigr)^{2/\tau}$ pointwise. With $g_\mu := c_\mu\, \abs{Q_\mu}^{-1/2}\, \mathbf{1}_{R_\mu}$, this reads $c_\mu^2\, \abs{Q_\mu}^{-1} \mathbf{1}_{Q_\mu} \le \gamma^{-4/p} \bigl( M g_\mu^{\,\tau} \bigr)^{2/\tau}$. Summing in $\mu$, taking $L^p$-norms of the square roots, and applying the Fefferman-Stein vector-valued maximal inequality~\cite{fefferman-stein1971} in $L^{p/\tau}(\ell^{2/\tau})$, legitimate since $p/\tau = 2$ and $2/\tau = 4/p > 1$:
\[
  \norm[\Big]{\Bigl( \sum_\mu c_\mu^2\,
     \abs{Q_\mu}^{-1} \mathbf{1}_{Q_\mu}
  \Bigr)^{1/2}}_p
  \;\le\; \gamma^{-2/p}
     \norm[\Big]{\Bigl( \sum_\mu
     \bigl( M g_\mu^{\,\tau} \bigr)^{2/\tau}
     \Bigr)^{1/2}}_p
  \;\lesssim_{n, p}\;
     \norm[\Big]{\Bigl( \sum_\mu g_\mu^{2}
     \Bigr)^{1/2}}_p ,
\]
the left side being $\norm{S_{\mathrm{Loc}}^Q(f)}_p$ and the right side $\norm{S_{\mathrm{Loc}}^R(f)}_p$, with all constants depending only on $(n, p, \gamma)$.
\end{proof}

The mode of convergence of an atomic decomposition is a genuinely delicate matter: an example of Meyer~\cite{meyertaiblesonweiss1985}, turned by Bownik~\cite{bownik2005} into a functional uniformly bounded on atoms with no bounded extension, shows the passage from atoms to the space cannot be taken for granted, and even the classical $L^q$ convergence appears only inside proofs~\cite{medasjogrenvallarino2008}, while the local $L^2$ decomposition of Ding, Jiang \& Zhu~\cite{dingjiangzhu2019} uses discrete Littlewood-Paley theory and a sampled frame.

\begin{proposition}
\label{prop:L2-convergence-E} Let $0 < p \le 1$ and let $f \in h^p(\mathbb{R}^n) \cap L^2(\mathbb{R}^n)$, with wavelet coefficients $\beta_k = \pair{f}{\phi_k}$ and $\alpha(\lambda) = \pair{f}{\psi_\lambda}$, and let $f = \sum_j \lambda_j\, a_j$ be the wavelet-built atomic decomposition furnished by the proof of \textup{(D)}$\implies$\textup{(E)} of Theorem~\ref{thm:hp-wavelet}. Its blocks $\Delta_j \subseteq \mathbb{Z}^n \sqcup \Lambda^0$ are pairwise disjoint, each atom synthesizing its block:
\[
  \lambda_j\, a_j \;=\; \sum_{\mu \in \Delta_j} c(\mu)\, u_\mu ,
\]
where $c(\mu)$ denotes $\beta_k$ or $\alpha(\lambda)$ and $u_\mu$ the corresponding basis function $\phi_k$ or $\psi_\lambda$, the father blocks being the singletons $\{k\}$ with $\lambda_j\, a_j = \beta_k\, \phi_k$. Then the decomposition is summable to $f$ in $L^2(\mathbb{R}^n)$, the error of every finite section being exact: for every finite $J$, writing $\displaystyle \Delta_J := \bigcup_{j \in J} \Delta_j$, we have:
\[
  \norm[\Big]{\, f \;-\; \sum_{j \in J} \lambda_j\, a_j \,}_2^2
  \;=\; \sum_{\mu \notin \Delta_J} \abs{c(\mu)}^2 ,
\]
and this error tends to $0$ along the net of finite subsets: the net of finite partial sums converges to $f$ in $L^2(\mathbb{R}^n)$, and the convergence is unconditional.
\end{proposition}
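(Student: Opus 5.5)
The plan is to reduce everything to Parseval in the orthonormal system $\{\phi_k\}_{k\in\mathbb{Z}^n}\cup\{\psi_\lambda\}_{\lambda\in\Lambda^0}$, which is a complete orthonormal basis of $L^2(\mathbb{R}^n)$ by the inhomogeneous decomposition~\eqref{eq:inh-wavelet-decomp}. First, I will check that for $f\in L^2$ the coefficients of conventions \textup{(P1)}--\textup{(P3)} coincide with the honest inner products $\int f\overline{\phi_k}$, $\int f\overline{\psi_\lambda}$. Since $f\in L^2\subseteq L^1_{\mathrm{loc}}$ has order zero, \textup{(P1)} gives the integrals directly, and Remark~\ref{rem:pairing-compat} transfers this to the other conventions. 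Consequently $f=\sum_\mu c(\mu)u_\mu$ in $L^2$, unconditionally, with $\norm{f}_2^2=\sum_\mu\abs{c(\mu)}^2<\infty$.

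Second, I will identify each block $\lambda_j a_j$ with an orthogonal projection. Lemma~\ref{lem:atom-builder}\textup{(ii)} gives the restriction identity $\lambda_j a_j=\sum_{\mu\in\Delta_j}c(\mu)u_\mu$, with convergence unconditional in $L^2$. The father blocks are singletons by construction. The mother blocks of different directions $e$ are indexed by disjoint subsets of $\Lambda^0$, since a cube $Q\in\Delta^e_j$ corresponds to the index $\lambda(e,Q)$. The partition property of Theorem~\ref{thm:tp-atomic-restate} then shows that, after relabeling, the $\Delta_j$ are pairwise disjoint and cover $\supp(\beta)\sqcup\supp(\alpha)$. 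Hence $\lambda_j a_j=P_{\Delta_j}f$, where $P_\Delta$ denotes the orthogonal projection onto $\overline{\spn}\{u_\mu:\mu\in\Delta\}$. For finite $J$ it follows that $\sum_{j\in J}\lambda_j a_j=P_{\Delta_J}f$, because finite sums of $L^2$-convergent series may be merged.

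Third, the exact identity is immediate from these two steps: $f-P_{\Delta_J}f=P_{\Delta_J^c}f=\sum_{\mu\notin\Delta_J}c(\mu)u_\mu$, and Parseval gives $\norm{f-\sum_{j\in J}\lambda_ja_j}_2^2=\sum_{\mu\notin\Delta_J}\abs{c(\mu)}^2$. Indices outside the union of the blocks have $c(\mu)=0$ and contribute nothing. To get net convergence, I fix $\varepsilon>0$ and choose a finite $G$ of indices with $\sum_{\mu\notin G}\abs{c(\mu)}^2<\varepsilon$. Only finitely many blocks meet $G\cap\supp(c)$, and I take $J_0$ to be these. For every finite $J\supseteq J_0$ we then have $\Delta_J\supseteq G\cap\supp(c)$, so the error is below $\varepsilon$. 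This is convergence of the net of finite partial sums, i.e.\ summability, which is unconditional by Lemma~\ref{lem:net-summability}. The limit is $f$, and it agrees with the $h^p$/$\mathcal{S}'$ sum of the decomposition by uniqueness of limits in $\mathcal{S}'$.

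The main obstacle is the bookkeeping in the first two steps rather than any estimate. One must make sure that the coefficients produced by the (D)$\implies$(E) construction are literally the $L^2$ inner products; this requires the compatibility of the pairing conventions, in particular the (P3) route through the Goldberg splitting, when $f$ is only known to lie in $h^p$. One must also make sure that the blocks of the $2^n$ separate decompositions remain disjoint once they are transported from $\mathbb{Z}^n\sqcup\mathcal{D}^0$ to $\mathbb{Z}^n\sqcup\Lambda^0$. I would settle the first point by invoking \textup{(P1)} directly, since $f\in L^2$ has order zero, so that no duality argument is needed.
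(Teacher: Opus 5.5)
Your proposal is correct and follows the paper's proof essentially step for step. The shared steps are Parseval in the inhomogeneous orthonormal basis, the block synthesis of Lemma~\ref{lem:atom-builder}\textup{(ii)} with disjointness carried through the type-splitting bijection, the identification $\sum_{j \in J} \lambda_j a_j = P_{\Delta_J} f$, and the exact tail identity; the paper checks that identification by comparing coefficients inside $\overline{\spn}\{u_\mu : \mu \in \Delta_J\}$, while you use additivity of projections onto disjoint blocks, and your explicit $J_0$ from a finite $G$ with small Parseval tail and your appeal to \textup{(P1)} for the order-zero distribution $f \in L^2$ just spell out points the paper states briefly.
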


\begin{proof}
The proof of \textup{(D)}$\implies$\textup{(E)} decomposes each of the pairs $(\beta, 0)$ and $(0, \alpha^e)$, $e \in E$, in restriction form (Theorem~\ref{thm:tp-atomic-restate}) and builds the functions by Lemma~\ref{lem:atom-builder}\textup{(ii)}. Its blocks, in the enumeration of the statement, are the father singletons $\{k\}$ and the sets $\Lambda^e_i := \{\lambda \in \Lambda^0 : e(\lambda) = e,\ Q(\lambda) \in \Delta^e_i\}$ carried by the type-splitting bijection $\lambda \leftrightarrow (Q, e)$ of Lemma~\ref{lem:tp-to-sf}, the father renormalization $(\kappa_0 \beta_k)(\kappa_0^{-1} \phi_k)$ cancelling identically, so that we write $\beta_k \phi_k$ throughout. By Lemma~\ref{lem:atom-builder}\textup{(ii)}, each block synthesizes as in the statement, $\lambda_j a_j = \sum_{\mu \in \Delta_j} c(\mu)\, u_\mu$, with unconditional $L^2$ convergence. The blocks are pairwise disjoint: the two layers are disjoint index copies, two father singletons are distinct sites, two mother blocks of the same direction have disjoint $\Delta^e_i$'s by the restriction form, and two of different directions are separated by the type map $\lambda \mapsto e(\lambda)$.

Since $f \in L^2(\mathbb{R}^n)$ and $\phi_k, \psi_\lambda \in L^2(\mathbb{R}^n)$, the pairings $\beta_k$ and $\alpha(\lambda)$ are honest $L^2$ inner products, and all pairing conventions coincide with them. The system $\{\phi_k\}_{k \in \mathbb{Z}^n} \cup \{\psi_\lambda\}_{\lambda \in \Lambda^0}$ is an orthonormal basis of $L^2(\mathbb{R}^n)$ (the inhomogeneous form of the multiresolution decomposition $L^2 = V_0 \oplus \bigoplus_{j \ge 0} W_j$), so Parseval's identity gives:
\begin{equation}
\label{eq:parseval-l2}
  \sum_{k \in \mathbb{Z}^n} \abs{\beta_k}^2
  \;+\; \sum_{\lambda \in \Lambda^0} \abs{\alpha(\lambda)}^2
  \;=\; \norm{f}_2^2 \;<\; \infty .
\end{equation}

For any subset $H \subseteq \mathbb{Z}^n \sqcup \Lambda^0$, the partial sum $P_{H} f := \sum_{\mu \in H} c(\mu)\, u_\mu$ (converging unconditionally in $L^2$ by~\eqref{eq:parseval-l2}) is the orthogonal projection of $f$ onto $\overline{\spn}\{u_\mu : \mu \in H\}$, with:
\begin{equation}
\label{eq:tail-identity}
\norm{f - P_{H} f}_2^2 \;=\; \sum_{\mu \notin H}
\abs{c(\mu)}^2 ,
\end{equation}
by orthonormality and Parseval applied to $f - P_{H} f$. The right-hand side of~\eqref{eq:tail-identity} tends to $0$ along the net of finite subsets of $\mathbb{Z}^n \sqcup \Lambda^0$, being the tail of the convergent nonnegative series~\eqref{eq:parseval-l2}, whose summability is exactly this net convergence.

The atomic partial sums are basis partial sums: fix a finite set $J$ of blocks and write $\Delta_J := \bigcup_{j \in J} \Delta_j$. By the block synthesis and disjointness:
\[
  \sum_{j \in J} \lambda_j\, a_j
  \;=\; \sum_{\mu \in \Delta_J} c(\mu)\, u_\mu
  \;=\; P_{\Delta_J} f ,
\]
the first equality holding by coefficient comparison: both sides lie in the closed subspace $V := \overline{\spn}\, \{u_\mu : \mu \in \Delta_J\}$ (each $\lambda_j a_j$ as the $L^2$-limit of finite sums of $u_\mu$'s, $\mu \in \Delta_j \subseteq \Delta_J$). Since the inner product against a fixed vector is $L^2$-continuous, orthonormality gives, for every $\mu' \in \Delta_J$:
\[
  \pair{\sum_{j \in J} \lambda_j\, a_j}{u_{\mu'}}
  \;=\; \sum_{j \in J} \sum_{\mu \in \Delta_j}
     c(\mu)\, \pair{u_\mu}{u_{\mu'}}
  \;=\; \sum_{j \in J} c(\mu')\,
     \mathbf{1}_{\Delta_j}(\mu')
  \;=\; c(\mu') ,
\]
the last equality because the pairwise disjoint sets $\{\Delta_j\}_{j \in J}$ cover $\mu'$ exactly once, and the same computation gives $\pair{P_{\Delta_J} f}{u_{\mu'}} = c(\mu')$. Two elements of $V$ with equal coefficients against the orthonormal basis $\{u_\mu\}_{\mu \in \Delta_J}$ of $V$ coincide.\\
To conclude, combining this identification with~\eqref{eq:tail-identity}, we get, for every finite $J$:
\[
  \norm[\Big]{\, f - \sum_{j \in J} \lambda_{j}\, a_j \,}_2^2
  \;=\; \sum_{\mu \notin \Delta_J} \abs{c(\mu)}^2 ,
\]
the claimed identity. As $J$ increases, $\Delta_J$ increases to $\bigcup_j \Delta_j \supseteq \{\mu : c(\mu) \ne 0\}$ (the father singletons enumerate $\supp(\beta)$ and the blocks $\Delta^e_i$ partition each $\supp(\alpha^e)$), so the error decreases to $0$ by~\eqref{eq:parseval-l2}. The limit is monotone and order-independent, which gives the unconditionality. In particular, the atomic series converges to $f$ in $L^2$, hence in $\mathcal{S}'(\mathbb{R}^n)$, consistently with the identity of Lemma~\ref{lem:atom-builder}\textup{(ii)}. Concluding the proof.
\end{proof}

\begin{corollary}
\label{cor:lq-convergence} Let $0 < p \le 1$ and $1 < q < \infty$, and let $f \in h^{p} \cap L^q(\mathbb{R}^n)$. Then the coefficients of Proposition~\ref{prop:L2-convergence-E} are defined, and the combined wavelet atomic series is summable to $f$ in $L^q(\mathbb{R}^n)$: the net of finite partial sums converges to $f$. Moreover, for every finite $J$, with $\Delta_J$ as in the proposition, the $L^q$ error is comparable to the Littlewood-Paley square function of the omitted coefficients,
\begin{equation}
\label{eq:lq-tail-square}
\Bigl\| f - \sum_{j \in J} \lambda_{j}\, a_j \Bigr\|_q
\;\simeq\;
\Bigl\| \Bigl( \sum_{\mu \notin \Delta_J} \abs{c(\mu)\, u_\mu}^2
        \Bigr)^{1/2} \Bigr\|_q ,
\end{equation}
with constants depending only on $n$, $q$, and the wavelet.
\end{corollary}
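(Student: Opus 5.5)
The plan is to replace the Parseval identity of Proposition~\ref{prop:L2-convergence-E} by the unconditional basis property of the wavelet system in $L^q$, $1<q<\infty$, of which Lemma~\ref{lem:wavelet-LP} records one half. First, since $f \in L^q$ and $\phi_k,\psi_\lambda$ are bounded with compact support, the pairings $\beta_k$, $\alpha(\lambda)$ are honest integrals, and all conventions \textup{(P1)}--\textup{(P3)} coincide with them, since $f$ has order zero. Moreover, the inhomogeneous system $\{u_\mu\}$ is an unconditional basis of $L^q$. This yields the two-sided square function equivalence
\[
\Bigl\|\sum_{\mu} d(\mu)\,u_\mu\Bigr\|_q \;\simeq\; \Bigl\|\Bigl(\sum_\mu \abs{d(\mu)\,u_\mu}^2\Bigr)^{1/2}\Bigr\|_q ,
\]
first for finitely supported $d$ and then, by density, for all $d$ for which either side is finite. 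The upper direction is the Calderón--Zygmund argument cited in Lemma~\ref{lem:wavelet-LP}, run on the $\ell^2$-valued operator. The lower direction follows by duality against $L^{q'}$, using orthonormality and the $L^{q'}$ case of Lemma~\ref{lem:wavelet-LP}. The father layer is handled in the same way, since the $\phi_k$ share the same size, support and regularity properties.

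Second, the projection structure. For any $H \subseteq \mathbb{Z}^n \sqcup \Lambda^0$, the family $\bigl(c(\mu)u_\mu\bigr)_{\mu \in H}$ is summable in $L^q$, to $P_H f$, by Lemma~\ref{lem:net-summability}\textup{(ii)}: for finite $G$ disjoint from a large finite $F_0$, the upper square function bound controls $\|\sum_G c(\mu)u_\mu\|_q$ by the $L^q$ norm of the square function restricted to $G$. This tends to $0$ by dominated convergence, since the full square function lies in $L^q$ with norm $\lesssim \|f\|_q$ by Lemma~\ref{lem:wavelet-LP}. In particular $f = P_{\text{all}} f$ in $L^q$. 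This identification uses the $L^2$ expansion on $L^2\cap L^q$, the fact that $P$ is bounded on $L^q$, and agreement in $\mathcal{S}'$.

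Third, the identification of atomic partial sums. Each block $\lambda_j a_j = \sum_{\mu\in\Delta_j} c(\mu)u_\mu$, first in $L^2$ by Lemma~\ref{lem:atom-builder}\textup{(ii)}, and also in $L^q$ by the previous step. For finite $J$ and disjoint blocks this gives $\sum_{j\in J}\lambda_j a_j = P_{\Delta_J} f$. Then
\[
f - \sum_{j \in J} \lambda_j a_j = \sum_{\mu\notin\Delta_J} c(\mu)\,u_\mu ,
\]
summable in $L^q$. Applying the two-sided equivalence to the coefficient sequence $c\,\mathbf{1}_{\complement\Delta_J}$ gives~\eqref{eq:lq-tail-square}. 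The right side is dominated by the full square function in $L^q$ and tends to $0$ pointwise as $\Delta_J$ exhausts $\operatorname{supp} c$. Dominated convergence then gives convergence of the net, hence summability and unconditionality.

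The main obstacle is the lower half of the square function equivalence, which is not stated earlier in the paper. It comes from duality: for $g\in L^{q'}$, write $\pair{\sum d(\mu) u_\mu}{g} = \sum d(\mu)\overline{\pair{g}{u_\mu}}$. Then apply Cauchy--Schwarz pointwise after inserting $\abs{u_\mu(x)}^2$ weights, using the nonnegativity and disjoint-scale normalization $\int\abs{u_\mu}^2=1$. One delicate point is matching the pointwise weights. I would try first to reduce to the $S^Q$ form via Lemma~\ref{lem:sf-comp} and the Fefferman--Stein argument of Corollary~\ref{cor:equiv-norms}, where the weights are indicators and the duality is clean. Failing that, I would cite the classical unconditional-basis theorem for both directions~\cite[Section~6.2]{meyer1992}.
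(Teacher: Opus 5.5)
Your proposal is correct and follows the same route as the paper: the paper's proof also rests on two ingredients. The first is the unconditional-basis (two-sided square function) property of the inhomogeneous wavelet system in $L^q$, which the paper simply cites from \cite[Section~6.2]{meyer1992} and which you sketch. The second is the identification, from the proof of Proposition~\ref{prop:L2-convergence-E}, of the atomic partial sums with the wavelet partial sums over the disjoint blocks $\Delta_J$.

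The obstacle you flag is milder than you fear. Lemma~\ref{lem:wavelet-LP}, applied to $\sum_\mu d(\mu)u_\mu$, already gives $\|(\sum_\mu\abs{d(\mu)u_\mu}^2)^{1/2}\|_q\lesssim\|\sum_\mu d(\mu)u_\mu\|_q$. Your duality argument with weights $\abs{u_\mu}^2$ and $\int\abs{u_\mu}^2=1$ gives the reverse bound directly, so no Fefferman--Stein reduction is needed. Your labels ``upper'' and ``lower'' are swapped between your first two steps, but both directions are covered.
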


\begin{proof}
The coefficients are finite by Hölder's Inequality on the compact supports of the wavelets, which lie in $L^{q'}(\mathbb{R}^n)$. The inhomogeneous system of the translates of the father wavelet and the generated wavelets is an unconditional basis of $L^q(\mathbb{R}^n)$ for $1 < q < \infty$~\cite[Section~6.2]{meyer1992}, so the wavelet expansion of $f$ is summable to $f$ in $L^q(\mathbb{R}^n)$, unconditional convergence and summability coinciding in a Banach space (Lemma~\ref{lem:net-summability}), with the error of any finite section comparable to the square function of the omitted coefficients. As shown in the proof of Proposition~\ref{prop:L2-convergence-E}, the atomic partial sums coincide with the wavelet partial sums over the corresponding disjoint blocks, so every finite section of the atomic series is a finite section of the wavelet series over the corresponding blocks, and the summability and the comparability~\eqref{eq:lq-tail-square} follow.

The range $1 < q < \infty$ is sharp: at $q = \infty$ a single unit-scale atom, e.g.\ $\mathbf{1}_{[0,1]^n}$, is not a uniform limit of continuous wavelets; at $q = 1$, $L^1(\mathbb{R}^n)$ admits no unconditional basis~\cite{lindenstrausstzafriri1977}, though $h^1(\mathbb{R}^n)$ itself restores unconditional convergence by Lemma~\ref{lem:net-summability}. The exact tail identity is likewise specific to $q = 2$, the parallelogram law failing elsewhere;~\eqref{eq:lq-tail-square} is its correct replacement. Since $H^p(\mathbb{R}^n)$ embeds continuously in $h^{p}$~\cite{goldberg1979}, both results transfer as stated to every $f \in H^p(\mathbb{R}^n) \cap L^q(\mathbb{R}^n)$.
\end{proof}

\section*{Acknowledgments}
The author wishes to express his deepest gratitude to Dr.~Galia Dafni, his master's thesis supervisor at Concordia University, whose guidance, insight, and encouragement shaped the thesis from which the present work grew, and whose feedback prompted the pursuit of both rigor and originality along the way. This work was partially supported by the CRM, Montréal, Canada.

\end{document}